\crefname{hypothesis}{Hypothesis}{Hypotheses}
\crefname{fact}{Fact}{Facts}
\title{A fully discrete truly multidimensional
  Active Flux method for the two-dimensional
  Euler equations\thanks{Preprint version submitted to arXiv.} }
\author{Erik Chudzik\thanks{Heinrich-Heine-University D\"usseldorf, Germany
  (\email{erik.chudzik@hhu.de}).}
\and Christiane Helzel\thanks{Heinrich-Heine-University D\"usseldorf, Germany
  (\email{christiane.helzel@hhu.de}).}
\and Amelie Porfetye\thanks{Heinrich-Heine-University D\"usseldorf,
  Germany
(amelie.porfetye@hhu.de)\funding{This work was funded by the German Science Foundation (DFG)
	under project number 525800857.}}}
\newcommand{\ignore}[1]{}
\begin{document}

\maketitle

\begin{abstract}
The Active Flux method is a finite volume method for hyperbolic
conservation laws that uses both cell averages and point
values as degrees of freedom. Several versions of such
methods are currently under development. We focus on third order accurate,
fully discrete Active Flux methods with compact stencil in space and
time. These methods require
exact or approximate evolution operators to update the point
value degrees of freedom.  In our method, these operators are 
provided by the method of bicharacteristics, which solves a local linearisation
of the nonlinear hyperbolic problem.
We analyse the linearisation error and
propose a new version of a fully discrete Active Flux method, which is
third order accurate for smooth problems.
Furthermore, we propose limiting strategies for our fully discrete Active Flux method
applied to the Euler equations of gas dynamics that guarantee positivity of
density and pressure. Finally, we discuss the implementation of
boundary conditions for these fully discrete Active Flux methods.
Numerical results confirm that the
method provides accurate results even when using coarse grids.   
\end{abstract}

\begin{keywords}
Active Flux methods, finite volume, Euler equations, limiting
\end{keywords}


\section{Introduction}
The Active Flux method, originally proposed by Eymann and Roe
\cite{eymann2011active,eymann2013multidimensional}, is a relatively
new variant of a finite volume method which is attracting increasing interest.
In a series of papers
\cite{article:Roe2017,article:Roe2018,article:Roe2020,article:Roe2021,proc:Roe2025},
Roe described his motivation to reconsider fundamental concepts in computational fluid
dynamics and proposed methods that use a globally continuous,
Riemann solver free, truly multi-dim\-en\-sio\-nal approach with compact stencil. 

While classical finite volume methods use cell average values of the
conserved quantities as degrees of
freedom, Active Flux methods use in addition point values along
the grid cell boundaries. This allows to construct a globally continuous piecewise
quadratic reconstruction that can be used to obtain third order
accurate methods. In the original version of the Active Flux
method, see for example \cite{eymann2013multidimensional}, the point values for
advection and acoustics have been evolved in time using exact
evolution operators. Numerical fluxes of the finite volume method for
the evolution of the cell average values
of the conserved quantities are computed from point
values at the previous time level, an intermediate time level and the
new time level using  Simpson's rule.  The resulting fully discrete methods use the
conservative form of the partial differential equation to evolve the cell average values
and the characteristic form to evolve the point values, see also
\cite{article:Abgrall2023} for a related discussions. Fully discrete Active
Flux methods for acoustics have been studied extensively in
\cite{article:BHKR2019,eymann2013multidimensional,FR2015,article:SR2023}.
Barsukow \cite{article:Barsukow2023}
showed, that on Cartesian grids, the Active Flux method for acoustics, with
exact evolution operator, preserves all steady states. This
makes Active Flux methods interesting candidates for approximating flow
problems in the low Mach number regime as pointed out in \cite{article:BHKR2019}.  

For more complex nonlinear hyperbolic problems, and in particular the
Euler equations of gas dynamics, exact evolution
operators are not available. Therefore, various approaches to the
development of finite volume methods inspired by the original Active
Flux approach are currently being developed. 
Eymann and Roe
\cite{eymann2013multidimensional}
proposed splitting methods for the Euler equations,
which separately approximate acoustic
wave propagation and  nonlinear transport. Such an approach was also
considered in the PhD thesis of Fan \cite{PhD:Fan2017}. In
\cite{article:HKS2019}, an ADER approach was proposed for the update of
the point value degrees of freedom. Abgrall and Barsukow
\cite{article:AB2023} proposed a method of lines approach for
one-dimensional hyperbolic problems, which also
allows the method to be extended to arbitrary order of accuracy. Most
recently, the method of lines approach for the two dimensional Euler
equations is being developed very actively. The resulting methods are
named either generalised Active Flux methods or PAMPA schemes 
\cite{article:AB2023,article:ABK2025,preprint:ALB2025,article:DBK2025}.
The special choice of the degrees of freedom allows to construct
spatial discretisations with compact stencil. However, a method of
lines approach increases the stencil in each stage of the time
stepping method.  Roe et al. \cite{article:Roe2021,article:SR2023} conjectured that
fully-discrete methods with compact stencil in space and time lead to
more accurate approximations on coarse grids. This motivates our work
on fully discrete Active Flux methods.
To construct such methods for general hyperbolic problems, we recently explored the
method of bicharacteristics for the evolution of point values
\cite{article:CHL2024}.
This was based on earlier work by  Luk\'a\v{c}ov\'a et al., where
the method of bicharacteristics has been used extensively
to develop truly multi-dimensional
finite volume methods for hyperbolic conservation laws, see for
example \cite{article:LMW2000,article:LSW2002}. These earlier methods
used cell average values as degrees of freedom and piecewise constant
or limited piecewise linear  reconstructions as common for classical finite
volume methods.

Here we present a fully discrete third order accurate Active Flux method for the Euler
equations using the method of bicharacteristics for the evolution of
the point values.
Limiting concepts will be introduced which allow the approximation of
complex discontinuous solution structures.
Currently, limiting concepts for generalised Active
Flux and PAMPA methods are actively
being developed, see
\cite{article:ABK2025,preprint:ALB2025,article:DBK2025} for details.
These methods limit point values as well as numerical fluxes to
maintain positivity of density and pressure. Furthermore, additional
numerical viscosity is introduced near strong shocks to eliminate
unphysical oscillations.
For the fully discrete Active Flux method, a positivity preserving
flux limiter for
advective transport was recently developed and used
to approximate the Vlasov-Poisson problem \cite{article:KCH2025}.

The rest of the paper is organised as follows: In \cref{sec:2} we
introduce the Cartesian grid Active Flux method for the two-dimensional Euler
equations and discuss its accuracy.
In \cref{sec:3} we consider local linearisations for discontinuous
solutions and present our limiting approach for point
values as well as fluxes.   Numerical results
illustrating the accuracy of our  method even on coarse grids
are shown in \cref{sec:4}.


\section{A third order accurate Active Flux method for the Euler
  equations using the method of bicharacteristics}
\label{sec:2}
In this section we present our unlimited Active Flux method for the
two-dimen\-sio\-nal Euler equations of gas dynamics.
The method uses both the conservative and a
non-conservative form of the hyperbolic system of partial differential
equations.
In  conservative form we consider
\begin{equation}
\partial_t \mathbf{q} + \partial_x \mathbf{f}(\mathbf{q}) + \partial_y
\mathbf{g}(\mathbf{q}) = \mathbf{0},
\end{equation}
where $\mathbf{q}(x,y,t)$ is a vector of conserved quantities and
$\mathbf{f}(\mathbf{q})$ and $\mathbf{g}(\mathbf{q})$ are vector
valued flux functions.
For the Euler equations 
the vector of conserved quantities is given by
$\mathbf{q} := (\rho, \rho u, \rho v, E)^T$ and the fluxes have the form
\begin{equation}
  \begin{split}
\mathbf{f}(\mathbf{q}) & := \left(\rho u, \rho u^2 + p, \rho u v,
  u(E+p)\right)^T, \\
\mathbf{g}(\mathbf{q}) & := \left( \rho v, \rho u
  v, \rho v^2 + p, v(E+p)\right)^T,
\end{split}
\end{equation}
where $\rho$ denotes density, $(u,v)$ the two-dimensional velocity
field, $E$ the total energy and $p$ the pressure. We use
the ideal gas equation of state
$E = \frac{p}{\gamma-1} + \frac{1}{2} \rho (u^2 + v^2)$ with $\gamma =
1.4$ to close the system.

For smooth solutions, the Euler equations can equivalently be written
in various nonconservative forms. We will use  a representation 
using primitive variables that leads to a
quasilinear hyperbolic system of the general form
\begin{equation}\label{eqn:quasilinearEuler}
\partial_t \mathbf{u} + A(\mathbf{u}) \partial_x \mathbf{u} +
B(\mathbf{u}) \partial_y \mathbf{u} = 0.
\end{equation}
Here, $\mathbf{u}(x,y,t)$ is the vector of primitive variables given by
$\mathbf{u}:=\left(\rho,u,v,p\right)^T$ and the matrices
$A(\mathbf{u})$ and $B(\mathbf{u})$ are given by
\begin{equation}\label{eqn:AandB}
  A(\mathbf{u}) := \left( \begin{array}{cccc}
                           u & \rho & 0 & 0 \\
                           0 & u & 0 & 1/\rho \\
                           0 & 0 & u & 0 \\
                           0 & \gamma p & 0 & u\end{array}\right),
                       \quad
B(\mathbf{u}) := \left( \begin{array}{cccc}
                           v & 0 & \rho & 0 \\
                           0 & v & 0 & 0 \\
                           0 & 0 & v & 1/\rho \\
                           0 & 0 & \gamma p & v \end{array}\right).                       
\end{equation}

We will now give a brief description of the Active Flux method on
two-dimen\-sio\-nal Cartesian grids. More details can be found in
\cite{article:ABK2025,article:Barsukow2023,article:BHKR2019,article:CCH2023,article:CHL2024,article:HKS2019}.
Let $(x_{i-\frac{1}{2}},x_{i+\frac{1}{2}}) \times (y_{j-\frac{1}{2}},
y_{j+\frac{1}{2}})$ be the grid cell $(i,j)$. The cell average values
of the conserved quantities in grid cell $(i,j)$ at time $t_n$ are
denoted by $\bar{Q}_{i,j}^n$. In addition to cell averages, Active
Flux methods also use point values at all four corners $(x_{i\pm
  \frac{1}{2}},y_{j\pm \frac{1}{2}})$ of the grid cell and the four midpoints
$(x_{i\pm \frac{1}{2}},y_j)$ and $(x_i,y_{j\pm \frac{1}{2}})$ of the
grid cell interfaces. At time $t_n$, these point values in conservative
variables are denoted by
$Q_{i-\frac{1}{2},j}^n$, $Q_{i-\frac{1}{2},j-\frac{1}{2}}^n$,
$Q_{i,j-\frac{1}{2}}^n$, $Q_{i+\frac{1}{2},j-\frac{1}{2}}^n$,
$Q_{i+\frac{1}{2},j}^n$, $Q_{i+\frac{1}{2},j+\frac{1}{2}}^n$,
$Q_{i,j+\frac{1}{2}}^n$ and $Q_{i-\frac{1}{2},j+\frac{1}{2}}^n$.
Analogously, we denote point values and cell average values
in primitive variables with $U$. Point values in primitive variables
can straight forwardly be computed from point values in conservative
variables. In order to get third order accurate cell average values in
primitive variables we first compute the point value $Q_{i,j}$ in
conservative variables at the midpoint of the grid by inverting
Simpson's rule. From this point value we can again easily compute the
point value in primitive variables, denoted by $U_{i,j}$. Now we can use
Simpson's rule to compute cell average values in primitive variables
denoted by $\bar{U}_{i,j}$.

We assume that all cell average values and all point values at time
$t_n$ are known. From these values a globally continuous piecewise quadratic
reconstruction can be computed. Basis functions for this
reconstruction can be found in \cite[Table 3]{article:HKS2019}.

The cell average values of conserved variables are evolved in time
using a finite volume method, which can be written
in the classical form
\begin{equation}\label{eqn:2dfvm}
\bar{Q}_{i,j}^{n+1} = \bar{Q}_{i,j}^n - \frac{\Delta t}{\Delta x} \left(
  F_{i+\frac{1}{2},j} - F_{i-\frac{1}{2},j} \right) - \frac{\Delta
  t}{\Delta y} \left( G_{i,j+\frac{1}{2}}-G_{i,j-\frac{1}{2}}\right).
\end{equation}
The numerical fluxes are computed using
Simpson's rule, i.e.\ the flux has the form
\begin{equation}\label{eqn:F}
  \begin{aligned}
    F_{i+\frac{1}{2},j}  := &\frac{1}{36}\left( \phantom{4}\mathbf{f}(Q_{i+\frac{1}{2},j-\frac{1}{2}}^n)
       + \phantom{1}4 \mathbf{f}(Q_{i+\frac{1}{2},j}^n) +\phantom{4}
      \mathbf{f}(Q_{i+\frac{1}{2},j+\frac{1}{2}}^n) \right. \\
     & +  \phantom{1} \left. 4\mathbf{f}(Q_{i+\frac{1}{2},j-\frac{1}{2}}^{n+\frac{1}{2}})  + 16
    \mathbf{f}(Q_{i+\frac{1}{2},j}^{n+\frac{1}{2}}) + 4
    \mathbf{f}(Q_{i+\frac{1}{2},j+\frac{1}{2}}^{n+\frac{1}{2}})\right.  \\
    & + \phantom{1} \left.\phantom{4}\mathbf{f}(Q_{i+\frac{1}{2},j-\frac{1}{2}}^{n+1})
       + \phantom{1}4 \mathbf{f}(Q_{i+\frac{1}{2},j}^{n+1}) +\phantom{4}
      \mathbf{f}(Q_{i+\frac{1}{2},j+\frac{1}{2}}^{n+1}) \right) 
    \end{aligned}
\end{equation}
and analogously for $G_{i,j+\frac{1}{2}}$.
Calculating the point values at the intermediate time
$t_{n+\frac{1}{2}}$ and the final time $t_{n+1}$ is the crucial step
in a fully discrete Active Flux method. 
We compute these point
values  in primitive variables using the method of bicharacteristics for locally linearised Euler equations.

More precisely, we use the  EG2 evolution operator, introduced in \cite{article:LSW2002}, for the linearised Euler equations
\begin{equation}\label{eqn:linEuler}
\partial_t \mathbf{v} + \tilde{A} \partial_x \mathbf{v} + \tilde{B}
\partial_y \mathbf{v} = \mathbf{0},
\end{equation}
where $\mathbf{v}:=(\rho,u,v,p)$ and $\tilde{A}:=A(\mathbf{u}')$,
$\tilde{B}:=B(\mathbf{u}')$ are constant matrices, corresponding
to the matrices from (\ref{eqn:AandB}) evaluated at a temporally and
spatially constant state
$\mathbf{u}':=(\rho',u',v',p')^T$.

The EG2 evolution operator approximates the solution of the linearised
Euler equations 
at a point $P=(\bar{x},\bar{y},t_{n}+\tau)$, with $\tau \in (0,\Delta
t]$, provided that the
solution at time $t_n$ is known
in a sufficiently large neighbourhood of the point $(\bar{x},\bar{y})$, which
depends on $\tau$. It is given by
\begin{equation}
  \label{eqn:EG2}
  \begin{aligned}
\rho(P) & =\rho({P'})-2\frac{p({P'})}{c'^2}+
\frac{1}{\pi}\int_{0}^{2\pi}\frac{p(Q(\theta))}{c'^2}-\frac{\rho'}{c'}u(Q(\theta))
\cos(\theta)\\
& \hspace*{5.5cm} -\frac{\rho'}{c'}v(Q(\theta)) \sin(\theta)
\  {\rm d} \theta + {\cal O}(\tau^3),\\
u(P) & =\frac{1}{\pi}\int_{0}^{2\pi}
-\frac{p(Q(\theta))}{\rho'c'}\cos(\theta)+u(Q(\theta))
\left(2\cos^2(\theta)-\frac{1}{2}\right) \\
& \hspace*{4cm} +2v(Q(\theta)) \sin(\theta)\cos(\theta)
\,  {\rm d} \theta + {\cal O}(\tau^3),\\
v(P) & =\frac{1}{\pi}\int_{0}^{2\pi}
-\frac{p(Q(\theta))}{\rho'c'}\sin(\theta)+2u(Q(\theta)) \sin(\theta)
\cos(\theta)\\
& \hspace*{4cm}+v(Q(\theta)) \left(2\sin^2(\theta)-\frac{1}{2}\right)  \,
{\rm d} \theta + {\cal O}(\tau^3),\\
p(P) &=-p({P'})+\frac{1}{\pi}\int_{0}^{2\pi}
p(Q(\theta))  -\rho'c'u(Q(\theta)) \cos(\theta) \\
& \hspace*{4.2cm}
-\rho'c'v(Q(\theta)) \sin(\theta) \,  {\rm d} \theta + {\cal O}(\tau^3),
  \end{aligned}
\end{equation}
where $P' = (\bar{x}-u'\tau,\bar{y}-v'\tau,t_n)$, $Q(\theta) =
(\bar{x}-(u'-c' \cos(\theta))\tau, \bar{y}-(v'-c'\sin(\theta))\tau,t_n)$ and $c' =
\sqrt{\gamma \frac{p'}{\rho'}}$.
We introduce the notation $L_{EG2}(\mathbf{u}',\tau,\mbox{rec})$ to write the
evolution operator in a compact form, where $\mathbf{u}'$ describes the state
used for the linearisation and $\mbox{rec} \in \{\mbox{cpq},\mbox{pc}\}$ describes the
reconstruction. In this paper we consider the continuous
piecewise quadratic (cpq) Active Flux reconstruction as well as a piecewise
constant (pc) reconstruction.   

Note that this approach is truly multi-dimensional as all directions of wave
pro\-pa\-gation are taken into account by integrating over the base of the
characteristic cone. In our unlimited Active Flux method
the globally continuous piecewise
quadratic reconstruction  is used to describe the
functions at the right hand side of (\ref{eqn:EG2}). Thus, the
approximation of the point value for the linearised Euler equations
is third order accurate in space and
time. In \cite{article:CHL2024} we studied Active Flux methods for
linear acoustics and the linearised Euler equations using the EG2
operator. Our computational
studies on an equidistant grid with mesh width $h:=\Delta x = \Delta y$
confirmed third order accuracy and stability of the resulting Active Flux
methods for time steps satisfying
a CFL condition of the form
$$
\mbox{CFL} := \left(\max\left\{|u'|,|v'|\right\} + c' \right) \Delta t / h \le 0.279.
$$

In order to use this approach for the approximation of the nonlinear
Euler equations, we use a different linearisation for the update of
each point value degree of freedom. One natural choice would be to
linearise around the point values at the previous time $t_n$, i.e., 
linearise around $U_{i-\frac{1}{2},j-\frac{1}{2}}^n$ in order to
compute $U_{i-\frac{1}{2},j-\frac{1}{2}}^{n+\frac{1}{2}}$ and
$U_{i-\frac{1}{2},j-\frac{1}{2}}^{n+1}$ and analogously for all the
other point value degrees of freedom. Another obvious possibility
would be to linearise around $U_{i-\frac{1}{2},j-\frac{1}{2}}^n$ in
order to compute  $U_{i-\frac{1}{2},j-\frac{1}{2}}^{n+\frac{1}{2}}$
and subsequently linearise around
$U_{i-\frac{1}{2},j-\frac{1}{2}}^{n+\frac{1}{2}}$ to compute
$U_{i-\frac{1}{2},j-\frac{1}{2}}^{n+1}$. In any case, the solution
at the intermediate and new time level
will be computed via (\ref{eqn:EG2}) using the reconstruction at time
$t_n$. This is a fundamental difference to a MOL approach and leads to a
more compact stencil. 
It is not immediately clear that this approach
leads to a third order accurate method for the Euler equation as we
are only solving the linearised Euler equations to evolve the point
value degrees of freedom. 
A detailed study of the linearisation error led to the following result.
\begin{theorem}\label{th:1}
Let $\mathbf{u}$ be a sufficiently smooth solution of the nonlinear
hyperbolic system
(\ref{eqn:quasilinearEuler}) and $\mathbf{v}$ a solution of the
linearised system (\ref{eqn:linEuler}) with
$\tilde{A}:=A(\mathbf{u}(\bar{x},\bar{y},t_{n}+\frac{\tau}{2})$
and
$\tilde{B}:=B(\mathbf{u}(\bar{x},\bar{y},t_{n}+\frac{\tau}{2})$. Assuming that the initial values at time $t_n$ are
equal, i.e.\ $\mathbf{u}(x,y,t_n)=\mathbf{v}(x,y,t_n)$, the
difference of the 
solutions at $(\bar{x},\bar{y},t_{n}+\tau)$ 
is described by  
\begin{equation}\label{eqn:errorLinearisation}
  \begin{split}
 \mathbf{v}(\bar{x},\bar{y},t_n+\tau) & + \frac{1}{2} \tau^2\left( A(\mathbf{u}) \frac{\partial A(\mathbf{u})}{\partial
    \mathbf{u}} \cdot (\partial_x \mathbf{u},\partial_x \mathbf{u}) +
  A(\mathbf{u}) \frac{\partial B(\mathbf{u})}{\partial
    \mathbf{u}} \cdot (\partial_x \mathbf{u},\partial_y \mathbf{u}) \right.\\
&\qquad \quad + \left. \left. B(\mathbf{u}) \frac{\partial A(\mathbf{u})}{\partial
    \mathbf{u}} \cdot (\partial_y \mathbf{u},\partial_x \mathbf{u}) +
  B(\mathbf{u}) \frac{\partial B(\mathbf{u})}{\partial
    \mathbf{u}} \cdot (\partial_y \mathbf{u},\partial_y \mathbf{u}) \right) \right|_{(\bar{x},\bar{y},t_n)}
\\
& = \mathbf{u}(\bar{x},\bar{y},t_n+\tau) + {\cal O}(\tau^3).
  \end{split}
\end{equation}
\end{theorem}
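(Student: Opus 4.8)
\emph{Proof plan.}
The plan is to Taylor-expand both $\mathbf{u}(\bar{x},\bar{y},t_n+\tau)$ and $\mathbf{v}(\bar{x},\bar{y},t_n+\tau)$ in $\tau$ about $t=t_n$ up to second order, with a remainder of order $\tau^3$, and then to compare. Since the linearisation depends on $\tau$ through the freezing time $t_n+\frac{\tau}{2}$, I would introduce, for each $\tau>0$, the solution $\mathbf{v}^{(\tau)}$ of (\ref{eqn:linEuler}) with $\tilde{A}=A(\mathbf{u}(\bar{x},\bar{y},t_n+\frac{\tau}{2}))$, $\tilde{B}=B(\mathbf{u}(\bar{x},\bar{y},t_n+\frac{\tau}{2}))$ and data $\mathbf{v}^{(\tau)}(\cdot,\cdot,t_n)=\mathbf{u}(\cdot,\cdot,t_n)$ (this is the $\mathbf{v}$ of the statement), expand $s\mapsto\mathbf{v}^{(\tau)}(\bar{x},\bar{y},t_n+s)$ about $s=0$, and afterwards set $s=\tau$. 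Because the data of $\mathbf{u}$ and $\mathbf{v}^{(\tau)}$ coincide at $t_n$, all their spatial derivatives agree there, so the only nontrivial inputs are $\partial_t^2\mathbf{u}|_{t_n}$, $\partial_s^2\mathbf{v}^{(\tau)}|_{s=0}$, and the first-order expansion $\tilde{A}=A(\mathbf{u})|_{t_n}+\frac{\tau}{2}\,\partial_t A|_{t_n}+{\cal O}(\tau^2)$ (and similarly for $\tilde{B}$), where $\partial_t A=\frac{\partial A}{\partial\mathbf{u}}\cdot\partial_t\mathbf{u}$.

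Differentiating (\ref{eqn:quasilinearEuler}) once more in $t$, replacing every time derivative via (\ref{eqn:quasilinearEuler}) and using $\partial_x A=\frac{\partial A}{\partial\mathbf{u}}\cdot\partial_x\mathbf{u}$ and its analogues, one finds that $\partial_t^2\mathbf{u}$, evaluated at $(\bar{x},\bar{y},t_n)$, is a sum $(\mathrm{I})+(\mathrm{II})+(\mathrm{III})$ of three groups: the purely second-order group $(\mathrm{I}):=A^2\partial_x^2\mathbf{u}+(AB+BA)\,\partial_x\partial_y\mathbf{u}+B^2\partial_y^2\mathbf{u}$; the group $(\mathrm{II}):=-(\partial_t A)\,\partial_x\mathbf{u}-(\partial_t B)\,\partial_y\mathbf{u}$ coming from time-differentiating the coefficient matrices; and the group $(\mathrm{III}):=A\frac{\partial A}{\partial\mathbf{u}}\cdot(\partial_x\mathbf{u},\partial_x\mathbf{u})+A\frac{\partial B}{\partial\mathbf{u}}\cdot(\partial_x\mathbf{u},\partial_y\mathbf{u})+B\frac{\partial A}{\partial\mathbf{u}}\cdot(\partial_y\mathbf{u},\partial_x\mathbf{u})+B\frac{\partial B}{\partial\mathbf{u}}\cdot(\partial_y\mathbf{u},\partial_y\mathbf{u})$, in which $A$ and $B$ act from the left on $\partial_x A$, $\partial_y B$ and the mixed space derivatives of the matrices. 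Since $\tilde{A},\tilde{B}$ are constant, $\partial_s^2\mathbf{v}^{(\tau)}|_{s=0}=\bigl(\tilde{A}^2\partial_x^2\mathbf{u}+(\tilde{A}\tilde{B}+\tilde{B}\tilde{A})\,\partial_x\partial_y\mathbf{u}+\tilde{B}^2\partial_y^2\mathbf{u}\bigr)\big|_{t_n}$, which coincides with $(\mathrm{I})$ up to ${\cal O}(\tau)$.

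The decisive point is that the half-step linearisation reproduces exactly the group $(\mathrm{II})$: using the expansion of $\tilde{A},\tilde{B}$ one gets
\[
\tau\,\partial_s\mathbf{v}^{(\tau)}\big|_{s=0}=-\tau\tilde{A}\,\partial_x\mathbf{u}-\tau\tilde{B}\,\partial_y\mathbf{u}=-\tau\bigl(A\,\partial_x\mathbf{u}+B\,\partial_y\mathbf{u}\bigr)+\frac{\tau^2}{2}\,(\mathrm{II})+{\cal O}(\tau^3),
\]
and the coefficient $\frac12$ in the freezing time $t_n+\frac{\tau}{2}$ is precisely what makes the factor in front of $(\mathrm{II})$ match the one appearing in the Taylor expansion of $\mathbf{u}$. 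Collecting terms, $\mathbf{u}(\bar{x},\bar{y},t_n+\tau)=\mathbf{u}|_{t_n}-\tau(A\partial_x\mathbf{u}+B\partial_y\mathbf{u})+\frac{\tau^2}{2}\bigl((\mathrm{I})+(\mathrm{II})+(\mathrm{III})\bigr)+{\cal O}(\tau^3)$ whereas $\mathbf{v}^{(\tau)}(\bar{x},\bar{y},t_n+\tau)=\mathbf{u}|_{t_n}-\tau(A\partial_x\mathbf{u}+B\partial_y\mathbf{u})+\frac{\tau^2}{2}\bigl((\mathrm{I})+(\mathrm{II})\bigr)+{\cal O}(\tau^3)$, the ${\cal O}(\tau)$ discrepancy between $\tilde{A}^2$ and $A^2$ in $(\mathrm{I})$ being absorbed into the remainder after multiplication by $\frac{\tau^2}{2}$. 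Subtracting, $(\mathrm{I})$ and $(\mathrm{II})$ cancel and $\mathbf{u}(\bar{x},\bar{y},t_n+\tau)-\mathbf{v}(\bar{x},\bar{y},t_n+\tau)=\frac{\tau^2}{2}\,(\mathrm{III})+{\cal O}(\tau^3)$, which is exactly the identity (\ref{eqn:errorLinearisation}).

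The main work, and the main pitfall, is the tensor bookkeeping in $\partial_t^2\mathbf{u}$: one has to keep apart a term such as $\bigl(\frac{\partial A}{\partial\mathbf{u}}\cdot(A\partial_x\mathbf{u})\bigr)\partial_x\mathbf{u}$, which belongs to $(\mathrm{II})$ because $A\partial_x\mathbf{u}+B\partial_y\mathbf{u}=-\partial_t\mathbf{u}$, from $A\bigl(\frac{\partial A}{\partial\mathbf{u}}\cdot\partial_x\mathbf{u}\bigr)\partial_x\mathbf{u}=A(\partial_x A)\,\partial_x\mathbf{u}$, which belongs to $(\mathrm{III})$; it is precisely this distinction that produces the non-vanishing linearisation error. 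One must also verify that the two Taylor remainders are ${\cal O}(\tau^3)$ uniformly in $\tau$: for $\mathbf{u}$ this is guaranteed by the smoothness hypothesis ($\partial_t^3\mathbf{u}$ bounded in a neighbourhood of $(\bar{x},\bar{y},t_n)$), and for $\mathbf{v}^{(\tau)}$ one uses that (\ref{eqn:linEuler}) is a linear constant-coefficient hyperbolic system which propagates the smooth data at $t_n$ without loss of regularity, while $\tilde{A}$ and $\tilde{B}$ stay bounded as $\tau\to0$, so that $\partial_s^3\mathbf{v}^{(\tau)}$ is bounded on $[t_n,t_n+\tau]$ uniformly for small $\tau$.
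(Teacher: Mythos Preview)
Your argument is correct and follows essentially the same route as the paper's proof: Taylor-expand $\mathbf{u}$ and $\mathbf{v}$ in $\tau$ at $t_n$, compute $\partial_{tt}\mathbf{u}$ by differentiating (\ref{eqn:quasilinearEuler}), compute $\partial_{ss}\mathbf{v}^{(\tau)}$ from the constant-coefficient system, and observe that the choice $t_n+\tau/2$ in the freezing makes the contribution you call $(\mathrm{II})$ cancel between the two expansions, leaving exactly the correction term $(\mathrm{III})$. Your organisation of $\partial_{tt}\mathbf{u}$ into the three groups $(\mathrm{I})$, $(\mathrm{II})$, $(\mathrm{III})$ is a slightly cleaner bookkeeping than the paper's term-by-term computation, and your explicit discussion of the uniform ${\cal O}(\tau^3)$ remainder bounds (which the paper leaves implicit) is a useful addition.
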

\begin{proof}
Follows from Taylor series expansions of $\mathbf{u}$ and
$\mathbf{v}$.
Consider
\begin{equation}
  \label{eqn:Th1-1}  
\begin{split}
  \mathbf{u}(\bar{x},\bar{y},t_n+\tau) -
  \mathbf{v}(\bar{x},\bar{y},t_n + \tau) & =
  \mathbf{u}(\bar{x},\bar{y},t_n)-\mathbf{v}(\bar{x},\bar{y},t_n)\\
  & + \tau \, \partial_t \left(
    \mathbf{u}(\bar{x},\bar{y},t)-\mathbf{v}(\bar{x},\bar{y},t)
    \right) \Big|_{t=t_n}\\
  & + \frac{1}{2} \tau^2 \,  \partial_{tt} \left(
    \mathbf{u}(\bar{x},\bar{y},t)-\mathbf{v}(\bar{x},\bar{y},t)
  \right) \Big|_{t=t_n} + {\cal O}(\tau^3).
\end{split}
\end{equation}
The difference appearing in the first line of the right hand
side of (\ref{eqn:Th1-1}) vanishes by assumption. To shorten our notation,
we will often omit the argument
$(\bar{x},\bar{y},t_n)$.

We now consider the differences in the second line of
(\ref{eqn:Th1-1}). First observe
\begin{equation}\label{eqn:observe}
  \begin{split}
\tilde{A} & = A \left(\mathbf{u}(\bar{x},\bar{y},t_n+\frac{\tau}{2}) + {\cal
    O}(\tau^2)\right) \\
& = A\left( \mathbf{u} + \frac{\tau}{2} \partial_t \mathbf{u} + {\cal
    O}(\tau^2) \right) \\
& = A(\mathbf{u}) + \frac{\tau}{2} \frac{A(\mathbf{u})}{\partial {\mathbf{u}}} 
\left( - A(\mathbf{u}) \partial_x \mathbf{u} - B(\mathbf{u})
  \partial_y \mathbf{u} \right) + {\cal O}(\tau^2),\\
\tilde{B} & = B(\mathbf{u}) + \frac{\tau}{2}
\frac{B(\mathbf{u})}{\partial {\mathbf{u}}} 
 \left( - A(\mathbf{u}) \partial_x \mathbf{u} -
  B(\mathbf{u}) \partial_y \mathbf{u} \right) + {\cal O}(\tau^2).
  \end{split}
\end{equation}
Thus,  with (\ref{eqn:quasilinearEuler}) and (\ref{eqn:linEuler}) we obtain
\begin{equation}\label{eqn:utmvt}
  \begin{split}
\partial_t \left( \mathbf{u}(\bar{x},\bar{y},t_n) -
  \mathbf{v}(\bar{x},\bar{y},t_n)\right)
& = -A(\mathbf{u}) \partial_x \mathbf{u} -
B(\mathbf{u})\partial_y \mathbf{u} + \tilde{A} \partial_x
\mathbf{v} + \tilde{B} \partial_y \mathbf{v} \\
&=  -\frac{\tau}{2} \frac{\partial A(\mathbf{u})}{\partial
  {\mathbf{u}}} \cdot \left(
  A(\mathbf{u}) \partial_x \mathbf{u} + B(\mathbf{u}) \partial_y
  \mathbf{u},   \partial_x \mathbf{u}\right) \\
& \quad - \frac{\tau}{2} \frac{\partial B (\mathbf{u}) }{\partial
  {\mathbf{u}}} \cdot \left(
  A(\mathbf{u}) \partial_x \mathbf{u} + B(\mathbf{u}) \partial_y
  \mathbf{u},  \partial_y \mathbf{u} \right) + {\cal O}(\tau^2).
  \end{split}
\end{equation}
We again made use of the fact that at time $t_n$ the initial values in
$\mathbf{u}$ and $\mathbf{v}$ coincide and therefore  $\partial_x
\mathbf{u} = \partial_x \mathbf{v}$ and $\partial_y \mathbf{u} =
\partial_y \mathbf{v}$.

Now we consider the difference in the third line of
(\ref{eqn:Th1-1}), starting with the $\mathbf{u}$ term. We have
\begin{equation}\label{eqn:utt-1}
  \begin{split}
& \partial_{tt} \mathbf{u}(\bar{x},\bar{y},t) \Big|_{t=t_n}  =
-\partial_t \left( A(\mathbf{u}) \partial_x \mathbf{u}
\right)\Big|_{(\bar{x},\bar{y},t_n)} - \partial_t \left( B(\mathbf{u})
  \partial_y \mathbf{u} \right) \Big|_{(\bar{x},\bar{y},t_n)}\\
& \quad = -\left( \frac{\partial A(\mathbf{u})}{\partial {\mathbf{u}}} \cdot \left(\partial_t \mathbf{u},
  \partial_x \mathbf{u}\right) + A(\mathbf{u}) \partial_{tx} \mathbf{u} +
  \frac{\partial B(\mathbf{u})}{\partial {\mathbf{u}}} \cdot \left(  \partial_t \mathbf{u}, \partial_y
  \mathbf{u}\right) + B(\mathbf{u}) \partial_{ty} \mathbf{u} \right)
  \end{split}
\end{equation}
with
\begin{equation*}
  \begin{split}
& A(\mathbf{u}) \partial_{tx} \mathbf{u} = A(\mathbf{u}) \partial_x
\left( - A(\mathbf{u}) \partial_x \mathbf{u} - B(\mathbf{u})
  \partial_y \mathbf{u} \right) \\
& \quad = -A(\mathbf{u}) \left( \frac{\partial A(\mathbf{u})}{\partial {\mathbf{u}}} 
  \cdot \left(\partial_x \mathbf{u}, \partial_x \mathbf{u}\right) + A (\mathbf{u})
  \partial_{xx} \mathbf{u} + \frac{\partial B(\mathbf{u})}{\partial {\mathbf{u}}}
  \cdot \left(\partial_x \mathbf{u}, \partial_y \mathbf{u}\right) + B(\mathbf{u})
  \partial_{yx} \mathbf{u} \right)
  \end{split}
\end{equation*}
and analogously
\begin{equation*}
  \begin{split}
  &B(\mathbf{u}) \partial_{ty} \mathbf{u} = B(\mathbf{u}) \partial_y
  \left( -A(\mathbf{u}) \partial_x \mathbf{u} - B(\mathbf{u})
    \partial_y \mathbf{u} \right)\\
  &\quad  - B(\mathbf{u}) \left(
  \frac{\partial A(\mathbf{u})}{\partial \mathbf{u}} \cdot \left( \partial_y
    \mathbf{u},\partial_x \mathbf{u}\right) + A(\mathbf{u})
  \partial_{xy} \mathbf{u} + \frac{\partial B(\mathbf{u})}{\partial
    \mathbf{u}}\cdot \left( \partial_y \mathbf{u}, \partial_y
    \mathbf{u}\right) + B(\mathbf{u}) \partial_{yy} \mathbf{u} \right).
\end{split}
\end{equation*}
Thus, we get
\begin{equation}\label{eqn:utt-2}
  \begin{split}
& \partial_{tt}\mathbf{u} (\bar{x},\bar{y},t) \Big|_{t=t_n}  =
\frac{\partial A(\mathbf{u})}{\partial \mathbf{u}} \cdot \left(
A(\mathbf{u}) \partial_x \mathbf{u}  + B(\mathbf{u}) \partial_y
\mathbf{u}, \partial_x \mathbf{u} \right) \\
& \qquad \qquad \qquad \quad +
\frac{\partial B(\mathbf{u})}{\partial \mathbf{u}} \cdot \left(
  A(\mathbf{u}) \partial_x \mathbf{u} + B(\mathbf{u}) \partial_y
  \mathbf{u},\partial_y \mathbf{u}\right) \\
& \quad + A(\mathbf{u}) \left( \frac{\partial A(\mathbf{u})}{\partial
    \mathbf{u}} \cdot \left(\partial_x \mathbf{u},\partial_x \mathbf{u}
  \right) + A(\mathbf{u}) \partial_{xx} \mathbf{u} + \frac{\partial B(\mathbf{u})}{\partial
    \mathbf{u}} \cdot \left(\partial_x \mathbf{u},\partial_y \mathbf{u}
  \right) + B(\mathbf{u}) \partial_{yx} \mathbf{u} \right) \\
& \quad + B(\mathbf{u}) \left( \frac{\partial A(\mathbf{u})}{\partial
    \mathbf{u}} \cdot \left(\partial_y \mathbf{u},\partial_x \mathbf{u}
  \right) + A(\mathbf{u}) \partial_{xy} \mathbf{u} + \frac{\partial B(\mathbf{u})}{\partial
    \mathbf{u}} \cdot \left(\partial_y \mathbf{u},\partial_y \mathbf{u}
  \right) + B(\mathbf{u}) \partial_{yy} \mathbf{u} \right).
  \end{split}
\end{equation}
Note that the terms appearing in the first two lines of (\ref{eqn:utt-2})
also appear with different sign in
(\ref{eqn:utmvt}) and will therefore cancel when inserted into (\ref{eqn:Th1-1}).
This is the reason why we propose to linearise around the
solution at time $\tau/2$.

Finally, we consider the contribution of the solution to the
linearised problem:
\begin{equation*}
  \begin{split}
\partial_{tt} \mathbf{v}(\bar{x},\bar{y},t)\Big|_{t=t_n} & =
\left( \partial_t \left( -\tilde{A} \partial_x \mathbf{v} - \tilde{B}
  \partial_y \mathbf{v} \right) \right) \Big|_{(\bar{x},\bar{y},t_n)} \\
& = \tilde{A} \tilde{A} \partial_{xx} \mathbf{v} + \tilde{A} \tilde{B}
\partial_{yx} \mathbf{v} + \tilde{B} \tilde{A} \partial_{xy}
\mathbf{v} + \tilde{B} \tilde{B} \partial_{yy} \mathbf{v}.
  \end{split}
\end{equation*}
Since this term appears in (\ref{eqn:Th1-1}) in a product with
$\tau^2$, we only need to take first order approximations
into account. Furthermore, we use once more the fact that,  at time $t_n$,
the functions $\mathbf{u}$ and $\mathbf{v}$ are identical, and obtain
\begin{equation}\label{eqn:vtt}
  \begin{split}
\partial_{tt} \mathbf{v}(\bar{x},\bar{y},t) \Big|_{t=t_n} & =
\left( A(\mathbf{u}) A(\mathbf{u}) \partial_{xx} \mathbf{u} +
  A(\mathbf{u})B(\mathbf{u}) \partial_{yx} \mathbf{u} \right. \\
& \quad  \left. + B(\mathbf{u})
A(\mathbf{u}) \partial_{xy} \mathbf{u} + B(\mathbf{u}) B(\mathbf{u})
\partial_{yy} \mathbf{u}\right)\Big|_{(\bar{x},\bar{y},t_n)} + {\cal O}(\tau).
\end{split}
\end{equation}
Inserting (\ref{eqn:utmvt}), (\ref{eqn:utt-2}) and (\ref{eqn:vtt}) in
(\ref{eqn:Th1-1}) we obtain (\ref{eqn:errorLinearisation}).
\end{proof}

While the local linearisation introduces a linearisation error of order
$\tau^2$, we can easily correct this error 
and obtain a 
third order accurate approximation
of the nonlinear system (\ref{eqn:quasilinearEuler}) at the point
$(\bar{x},\bar{y},t_n+\tau)$ by adding a correction term of order
${\cal O}(\tau^2)$.
\begin{corollary}\label{cor:1}
A third order accurate  approximation of 
$\mathbf{u}(\bar{x},\bar{y},t_n+\tau)$ 
can be obtained by  linearising around an at least 
second order accurate approximation of
$\mathbf{u}(\bar{x},\bar{y},t_n+\tau/2)$, approximating the resulting linear
system using the EG2 operator and  adding a correction term
$C(\bar{x},\bar{y},t_n,\tau)$ of the form
$$
\frac{1}{2} \tau^2 \left( \begin{array}{c}
                           \rho(f_1+f_2)+u(\rho_x (2u_x+v_y)+\rho_y
                            v_x) + v(\rho_xu_y+\rho_y(u_x+2v_y)) -
                            h_1\\
                            u f_1 + v u_y (u_x+v_y) + g_1/\rho -
                            p_xh_2\\
                             v f_2 + u v_x(u_x+v_y) + g_2/\rho -
                            p_yh_2\\
                             u g_1 + v g_2 + \gamma p (f_1 + f_2 - h_1/\rho)
         \end{array}\right),
       $$
       with
       \begin{equation*}
         \begin{split}
           f_1 & = u_x^2+u_y v_x, \quad f_2  = u_y v_x+v_y^2,\\
           g_1 & = p_x (\gamma (u_x+v_y)+u_x)+p_y v_x, \quad g_2  = p_y (\gamma (u_x+v_y)+v_y)+p_x u_y,\\ 
           h_1 & = (\rho_x p_x+\rho_y p_y)/\rho, \quad h_2  = (\rho_x u+\rho_y v)/\rho^2,
         \end{split}
       \end{equation*}
       where all terms are evaluated at $(\bar{x},\bar{y},t_n)$. 
     \end{corollary}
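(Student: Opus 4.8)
The plan is to derive the corollary directly from \cref{th:1}, by observing that the theorem still applies when the linearisation state is only a second order accurate approximation of $\mathbf{u}(\bar{x},\bar{y},t_n+\frac{\tau}{2})$, rearranging (\ref{eqn:errorLinearisation}) to isolate the correction term, using the third order accuracy of the EG2 operator, and finally evaluating the abstract correction term on the Euler matrices. I would first observe that the proof of \cref{th:1} uses the half-step state only through the first identity in (\ref{eqn:observe}), where it already enters in the form $\tilde{A}=A(\mathbf{u}(\bar{x},\bar{y},t_n+\frac{\tau}{2})+{\cal O}(\tau^2))$ (and analogously for $\tilde{B}$). Since $A$ and $B$ are smooth, replacing the exact half-step state by any at least second order accurate approximation $\mathbf{u}'$ merely modifies this ${\cal O}(\tau^2)$ term, so the rest of the proof and hence (\ref{eqn:errorLinearisation}) carry over unchanged. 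Rearranging (\ref{eqn:errorLinearisation}) then gives
\[
\mathbf{u}(\bar{x},\bar{y},t_n+\tau)=\mathbf{v}(\bar{x},\bar{y},t_n+\tau)+C(\bar{x},\bar{y},t_n,\tau)+{\cal O}(\tau^3),
\]
where $C(\bar{x},\bar{y},t_n,\tau)$ equals $\frac{1}{2}\tau^2$ times the bilinear bracket of (\ref{eqn:errorLinearisation}), all derivatives evaluated at $(\bar{x},\bar{y},t_n)$.

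Next, I would invoke the third order accuracy of the EG2 evolution operator used with the continuous piecewise quadratic reconstruction, as recalled in the text following (\ref{eqn:EG2}) and established in \cite{article:CHL2024}, in the form $L_{EG2}(\mathbf{u}',\tau,\mbox{cpq})=\mathbf{v}(\bar{x},\bar{y},t_n+\tau)+{\cal O}(\tau^3)$. Together with the display above, this shows that $L_{EG2}(\mathbf{u}',\tau,\mbox{cpq})+C(\bar{x},\bar{y},t_n,\tau)$ is a third order accurate approximation of $\mathbf{u}(\bar{x},\bar{y},t_n+\tau)$, which is exactly the assertion; only the explicit form of $C$ then remains to be checked.

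For that last step I would substitute the Euler matrices (\ref{eqn:AandB}) and exploit the sparsity of their Jacobians: $\frac{\partial A}{\partial u}=\frac{\partial B}{\partial v}=I$, whereas $\frac{\partial A}{\partial\rho}$ is nonzero only in entries $(1,2)$ and $(2,4)$, $\frac{\partial A}{\partial p}$ only in entry $(4,2)$, $\frac{\partial B}{\partial\rho}$ only in entries $(1,3)$ and $(3,4)$, $\frac{\partial B}{\partial p}$ only in entry $(4,3)$, while all other first derivatives of $A$ and $B$ vanish. With these Jacobians, the four bilinear terms in the bracket of (\ref{eqn:errorLinearisation}) are straightforward to evaluate at the primitive-variable gradients $\partial_x\mathbf{u}=(\rho_x,u_x,v_x,p_x)^T$ and $\partial_y\mathbf{u}=(\rho_y,u_y,v_y,p_y)^T$; summing the four resulting four-component vectors, multiplying by $\frac{1}{2}\tau^2$ and grouping terms then produces precisely the vector displayed in \cref{cor:1}, with the abbreviations $f_1,f_2,g_1,g_2,h_1,h_2$ defined there. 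I expect the only real obstacle to be the bookkeeping in this final computation: it is long but purely mechanical, and is conveniently confirmed with a computer algebra system, since \cref{th:1} has already done everything that involves the differential equation. The identities $\frac{\partial A}{\partial u}=\frac{\partial B}{\partial v}=I$ are precisely what make several of the bilinear contributions collapse and keep the resulting expression compact.
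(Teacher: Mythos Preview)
Your proposal is correct and follows essentially the same route as the paper: invoke \cref{th:1} (noting, as the paper also remarks after \cref{alg:AFEu}, that (\ref{eqn:observe}) already absorbs an ${\cal O}(\tau^2)$ perturbation of the linearisation state), combine with the ${\cal O}(\tau^3)$ accuracy of the EG2 operator, and then evaluate the abstract $\tau^2$ bracket of (\ref{eqn:errorLinearisation}) on the concrete Euler matrices. The paper's proof sketch additionally cites the accuracy of Simpson's rule for the flux quadrature, but that pertains to the overall scheme rather than to the point-value statement of \cref{cor:1} itself, so your omission of it is appropriate.
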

     \begin{proof}
Follows from explicit computation of the second order term in (\ref{eqn:Th1-1}) for the
Euler equations, the accuracy of the EG2 evolution operator for the
linearised Euler equations and the accuracy of Simpson's rule for the
fluxes used to evolve the cell average values.
       \end{proof}
We summarise the Active Flux method for the Euler equations in
\cref{alg:AFEu}.
\begin{algorithm}
  \caption{Third order accurate Active Flux method for the Euler
    equations}
  \label{alg:AFEu}
  \begin{algorithmic}
    
\STATE{1.) For all grid cells: compute point values in primitive variables at the intermediate
  time using
  \begin{equation*}
    \begin{split}
U_{i-\frac{1}{2},j-\frac{1}{2}}^{n+\frac{1}{2}} & =
L_{EG2}(L_{EG2}(U_{i-\frac{1}{2},j-\frac{1}{2}}^n,\Delta
t/4,\mbox{cpq}),\Delta t/2, \mbox{cpq}) \\
& \quad +
C(x_{i-\frac{1}{2}},y_{j-\frac{1}{2}},t_n,\Delta t/2)
\end{split}
\end{equation*}
and analogously for all other point value degrees of freedom.}

\STATE{2.) For all grid cells: compute point values in primitive variables at the new time
  level using
  \begin{equation*}
U_{i-\frac{1}{2},j-\frac{1}{2}}^{n+1} = L_{EG2}
(U_{i-\frac{1}{2},j-\frac{1}{2}}^{n+\frac{1}{2}},\Delta t,\mbox{cpq}) +  C(x_{i-\frac{1}{2}},y_{j-\frac{1}{2}},t_n,\Delta t)
\end{equation*}
and analogously for all other point values.}

\STATE{3.) Compute numerical fluxes using Simpson's rule and evolve all
  cell average values using (\ref{eqn:2dfvm}).}  
\end{algorithmic}
\end{algorithm}

Note that the term $L_{EG2}(U_{i-\frac{1}{2},j-\frac{1}{2}}^n,\Delta
t/4,\mbox{cpq})$, used in the  first step of the algorithm,
represents a second order accurate approximation of
$\mathbf{u}(x_{i-\frac{1}{2}},y_{j-\frac{1}{2}},t_{n}+\Delta
t/4)$. Here this point value is
computed using the method of bicharacteristics applied
to the linearised Euler equations by linearising around the point
value at the old time level.
From the proof of \cref{th:1}, compare with (\ref{eqn:observe}),
it follows that a second order
accurate approximation of this point value  is
sufficient for the  computation of a third order accurate point value at
the intermediate time $t_{n+\frac{1}{2}}$.

Note furthermore, that a first order accurate approximation of $C$
would be appropriate. Here we instead used a second order accurate
approximation, where all derivatives were computed using centered
differences employing neighbouring point values or midpoints of the grid cell.
In the special case where the velocity field and the pressure are
constant in space, the exact solution of the Euler equations
consists in a simple transport of the initial density profile. In this case
the correction term $C$ is equal to zero and therefore not needed for
third order accuracy.

\begin{remark}
Our approach for Burgers' equation, described in \cite{article:CHK2021},
differs from the method described in \cref{alg:AFEu}.  
For Burgers' equation the speed of the characteristic used to compute
the solution $q$ at $(\bar{x},\bar{y},t_{n+1})$ is equal to the
solution $q(\bar{x},\bar{y},t_{n+1})$ we wish to compute.
Thus, an iterative approach was used to
get third order accurate results for smooth solutions.
\end{remark}

We finally confirm the order of convergence by numerical
simulations. Note that the first test problem was also considered in
\cite{article:HKS2019} and \cite{article:Barsukow2021} to test different
one-dimensional methods. Here we use our two-dimensional Active Flux
method with $8$ grid cells in the $y$-direction, different numbers
of grid cells in the $x$-direction and double-periodic boundary
conditions.
\begin{example}\label{ex:1}
We consider the Euler equations with initial values of the form
$$
\rho(x,y,0) = p(x,y,0) = 1 + \frac{1}{2} \exp(-80(x-\frac{1}{2})^2),
\quad u(x,y,0) = v(x,y,0) = 0
$$
and compute numerical solutions for $x \in [0,1]$
at time $t=0.25$. 
\end{example}
\cref{tab:ex1} shows results of our
numerical study which confirms the third order convergence
rate of \cref{alg:AFEu}, see column ``with correction''. We also show a
convergence study for the same method except that we do not add the
correction term. The results are shown in the ``without correction'' column.
On coarser grids both methods provide comparable results but on
the finest grid we observe the expected second order convergence
if we ignore the correction term.  
In the last column we show a convergence study for a simplified
method, where the first step of the algorithm is replaced by
\begin{equation} \label{eqn:1stStep}
U_{i-\frac{1}{2},j-\frac{1}{2}}^{n+\frac{1}{2}} =
L_{EG2}(U_{i-\frac{1}{2},j-\frac{1}{2}}^n,\Delta t/2,\mbox{cpq}) +
C(x_{i-\frac{1}{2}},y_{j-\frac{1}{2}},t_n,\Delta t/2)
\end{equation}
and analogously for all the other point value degrees of freedom. This
means that we compute the intermediate point values by linearising
around the old point value. These point values are used for the
computation of fluxes only and although they are now formally only second
order accurate the overall accuracy of the method is not
degraded for this test problem. For our numerical convergence study we
used the approach from \cite[Section A.6.3]{book:LeV2007}, which
allows to compute a convergence rate from numerical solutions on three
different grids. The values of the $L_1$-error correspond to
differences with approximations that use
twice as many cells in both directions.

\begin{table}[htbp]
\footnotesize
\caption{Numerical convergence study for the one-dimensional solution
  structure of \cref{ex:1}.}\label{tab:ex1}
\begin{center}
  \begin{tabular}{|r|cc|cc|cc|} \hline
 cells & \multicolumn{2}{c|}{with correction} &
      \multicolumn{2}{c|}{without correction} &
                                                \multicolumn{2}{c|}{simplified method}  \\
 in $x$ & $L_1$-error in $\rho$ & EOC & $L_1$-error in $\rho$ &
                                                                      EOC & $L_1$-error in $\rho$ & EOC\\ \hline
    $32$ & $3.112504\cdot10^{-4}$ & & $3.124039 \cdot 10^{-4}$ & &
                                                                 $3.098132
                                                                 \cdot
    10^{-4}$ & \\
    $64$ & $4.383598 \cdot 10^{-5}$ & $2.82$ & $4.409654 \cdot
                                               10^{-5}$ & $2.82$ &
                                                                   $4.321119
                                                                   \cdot
    10^{-5}$ & $2.84$\\ 
    $128$ & $5.676151 \cdot 10^{-6}$ & $2.94$ & $5.876851 \cdot
                                                10^{-6}$ & $2.89$ &
                                                                    $5.516531
                                                                    \cdot
    10^{-6}$ & $2.96$\\ 
    $256$ & $7.170790 \cdot 10^{-7}$ & $2.98$ & $7.892507 \cdot
                                                10^{-7}$ & $2.89$ &
                                                                    $6.796768
                                                                    \cdot
    10^{-7}$ & $3.02$\\
    $512$ & $9.022719 \cdot 10^{-8}$ & $2.99$ & $1.220382 \cdot
                                                10^{-7}$ & $2.69$ &
                                                                    $8.197030
                                                                    \cdot
    10^{-8}$ & $3.05$ \\
    $1024$ & $1.129548 \cdot 10^{-8}$ & $3.00$ & $2.892100 \cdot 10^{-8}$
                                & $2.07$ & $1.088839 \cdot 10^{-8}$ & $2.91$ \\ \hline
  \end{tabular}
\end{center}
\end{table}
In \cref{fig:ex1} we show numerical results for the three different
methods on a coarse grid using only 32 grid cells in the $x$
direction. All three versions of the Active Flux method produce very accurate results.  
\begin{figure}[tbh]
  \includegraphics[width=0.3\linewidth]{./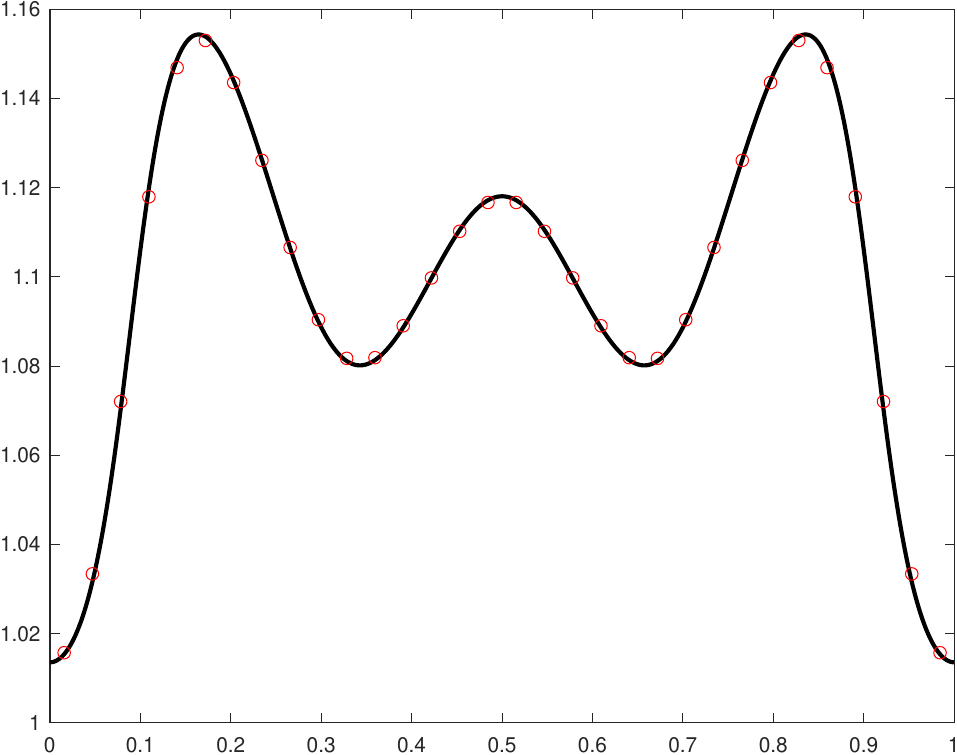}\hfill
  \includegraphics[width=0.3\linewidth]{./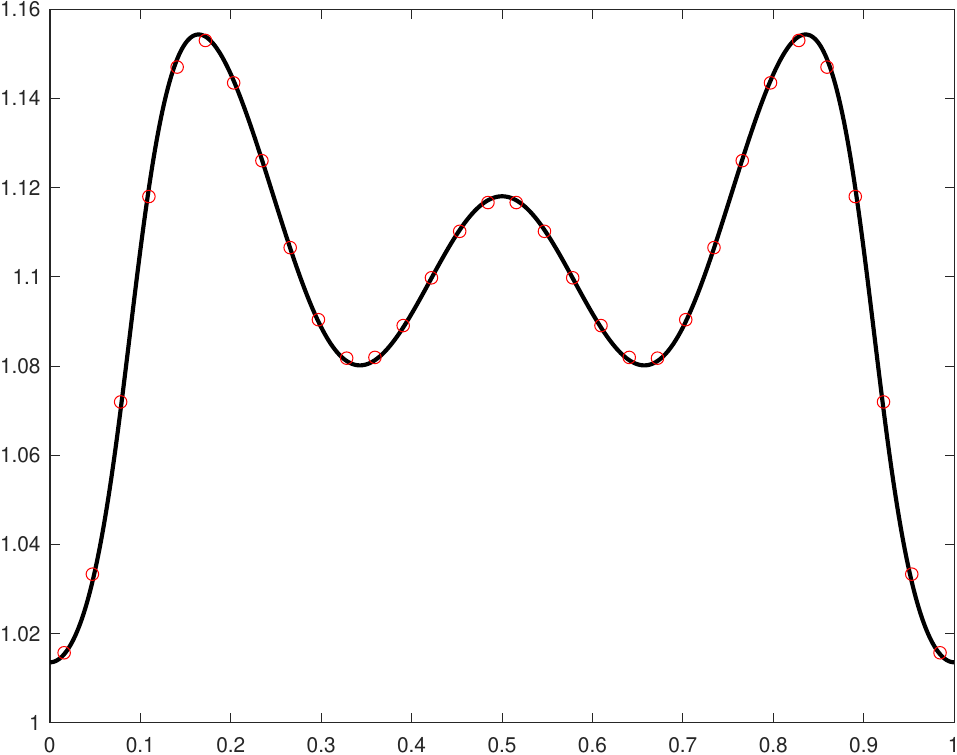}\hfill
  \includegraphics[width=0.3\linewidth]{./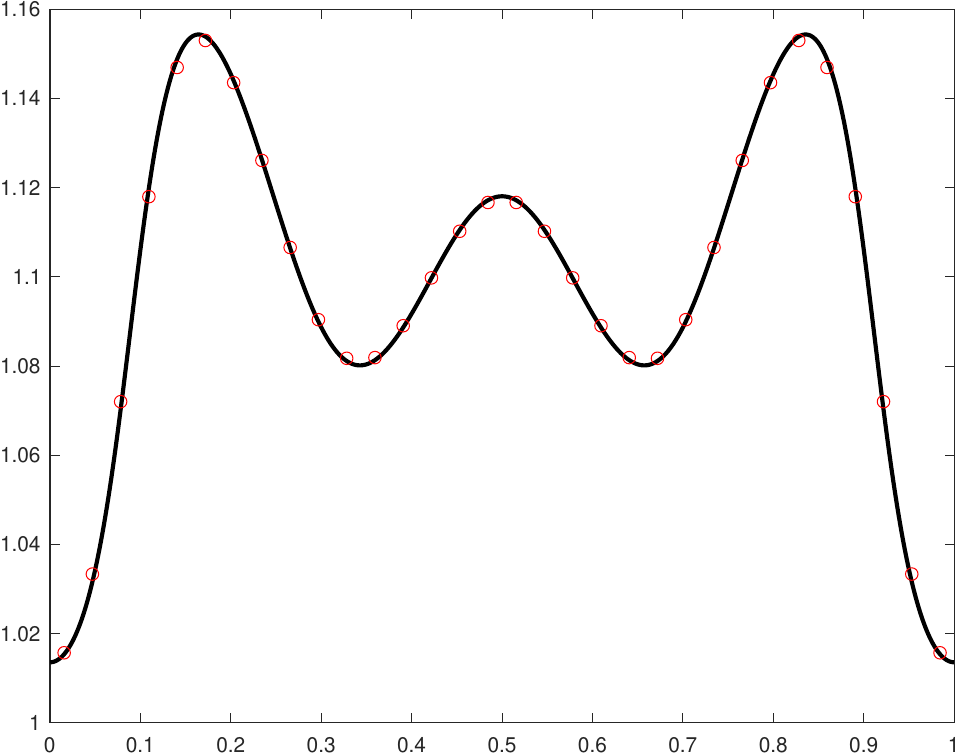}
\caption{\label{fig:ex1} Numerical results of density for \cref{ex:1} using a grid with 32 grid
  cells in $x$-direction. The solid line represents a highly resolved reference
  solution using $2048$ grid cells. Third order accurate
  Active Flux method described in \cref{alg:AFEu} (left), method
  without correction (middle) and simplified method (right). }  
\end{figure}  

In our second accuracy test we consider a two-dimensional smooth
traveling vortex introduced in \cite{article:KKM2008}.
\begin{example}\label{ex:2}
  We consider the Euler equations with initial values of the form
\begin{equation*}
\begin{split}
      \rho(x,y,0) & = \left\{ \begin{array}{lcl}
                                \rho_c + \frac{1}{2}(1-r^2)^6 & : &
                                                                    r<1,\\
                                \rho_c \phantom{ - 1024 \sin(\theta)
                                (1-r^6) r^6 } \, \, \, & : &
                                             \mbox{otherwise},\end{array}\right.\\
   u(x,y,0) & = \left\{ \begin{array}{lcl}
                          u_c - 1024 \sin(\theta) (1-r^6) r^6 & : & r<1, \\
                          u_c & : & \mbox{otherwise},\end{array}\right. \\
   v(x,y,0) & = \left\{ \begin{array}{lcl}
                          v_c + 1024 \cos(\theta) (1-r^6) r^6 & : &
                                                                    r<1,\\
                          v_c & : & \mbox{otherwise},\end{array}\right.\\
    p(x,y,0) & = \left\{ \begin{array}{lcl} p_c + (p(r)-p(1)) & : &
                                                                  r<1,\\
                         p_c \phantom{ - 1024 \sin(\theta) (1-r^6) r^6
                           } \, \, \, & : & \mbox{otherwise},\end{array} \right.
\end{split}
\end{equation*}
with $(\rho_c,u_c,v_c,p_c) = (0.5,1,1,0.1)$, $r =
\sqrt{(x-x_0)^2+(y-y_0)^2}/R$, \newline
$(x_0,y_0)=(0.5,0.5)$, $\theta = \arctan\left( \frac{y-y_o}{x-x_0}\right)$, $R=0.4$ and
\begin{equation*}
  \begin{split}
p(r) & = 1024^2 \left( \frac{1}{72}r^{36}-\frac{6}{35}r^{35} +
  \frac{15}{17}r^{34} -
  \frac{74}{33}r^{33}+\frac{57}{32}r^{32}+\frac{174}{31}r^{31}
  -\frac{269}{15}r^{30}\right.\\
  & +\frac{450}{29}r^{29}+\frac{153}{8}r^{28} -
    \frac{1564}{27}r^{27} 
    +\frac{510}{13}r^{26}+\frac{204}{5}r^{25}-\frac{1473}{16}r^{24}+\frac{1014}{23}r^{23}\\
  &
  +\frac{1053}{22}r^{22}-\frac{558}{7}r^{21}+\frac{783}{20}r^{20}+\frac{54}{19}r^{19}
  -\frac{38}{9}r^{18}-\frac{222}{17}r^{17}+\frac{609}{32}r^{16}  \\
  & \left. -\frac{184}{15}r^{15} + \frac{9}{2} r^{14} - \frac{12}{13}r^{13} + \frac{1}{12}r^{12}\right).
  \end{split}
\end{equation*}
We compute the solution on the domain $[0,1]\times [0,1]$ using
double-periodic boundary conditions. At times $t=n$, ($n \in \mathbb{N}$), the
exact solution agrees with the initial values.
\end{example}  
\cref{tab:ex2} shows results of a convergence study for approximations
at time $t=1$, which again
confirm third order accuracy of the method described in \cref{alg:AFEu}
and comparable results for the simplified method.
\begin{table}[htbp]
\footnotesize
\caption{Numerical convergence study for the two-dimensional smooth
  vortex problem described in \cref{ex:2}.}\label{tab:ex2}
\begin{center}
  \begin{tabular}{|r|cc|cc|cc|} \hline
 cells & \multicolumn{2}{c|}{with correction} &
      \multicolumn{2}{c|}{without correction} &
                                                \multicolumn{2}{c|}{simplified method}  \\
 in $x$ & $L_1$-error in $\rho$ & EOC & $L_1$-error in $\rho$ &
                                                                      EOC & $L_1$-error in $\rho$ & EOC\\ \hline
    $32$ & $5.825428\cdot10^{-4}$ & & $ 5.812933\cdot 10^{-4}$ & &
                                                                 $
                                                                 5.829871\cdot
    10^{-4}$ & \\
    $64$ & $ 9.548670\cdot 10^{-5}$ & $2.60$ & $ 9.685133\cdot
                                               10^{-5}$ & $2.58$ &
                                                                   $9.567499
                                                                   \cdot
    10^{-5}$ & $2.60$\\ 
    $128$ & $ 1.296321\cdot 10^{-5}$ & $2.88$ & $ 1.352539\cdot
                                                10^{-5}$ & $2.84$ &
                                                                    $1.301396
                                                                    \cdot
    10^{-5}$ & $2.87$\\ 
    $256$ & $ 1.646819\cdot 10^{-6}$ & $2.97$ & $ 1.870932\cdot
                                                10^{-6}$ & $2.85$ &
                                                                    $1.661371
                                                                    \cdot
    10^{-6}$ & $2.96$\\
    $512$ & $ 2.065112\cdot 10^{-7}$ & $2.99$ & $ 2.953343\cdot
                                                10^{-7}$ & $2.66$ &
                                                                    $2.111573
                                                                    \cdot
    10^{-7}$ & $2.97$ \\
    $1024$ & $ 2.606372\cdot 10^{-8}$ & $2.99$ & $ 6.084135\cdot 10^{-8}$
                                & $2.28$ & $ 2.777238\cdot 10^{-8}$ & $2.92$ \\ \hline
  \end{tabular}
\end{center}
\end{table}
Without the correction term introduced in \cref{cor:1} the
experimentally observed convergence rate decreases on finer grids and
approaches two, as expected from the theoretical results.
In \cref{fig:ex2} we show numerical results on a coarse grid 
using the third order accurate method with correction term. The
solution structure is well preserved for several rotations.
\begin{figure}[tbh]
\includegraphics[width=0.32\linewidth]{./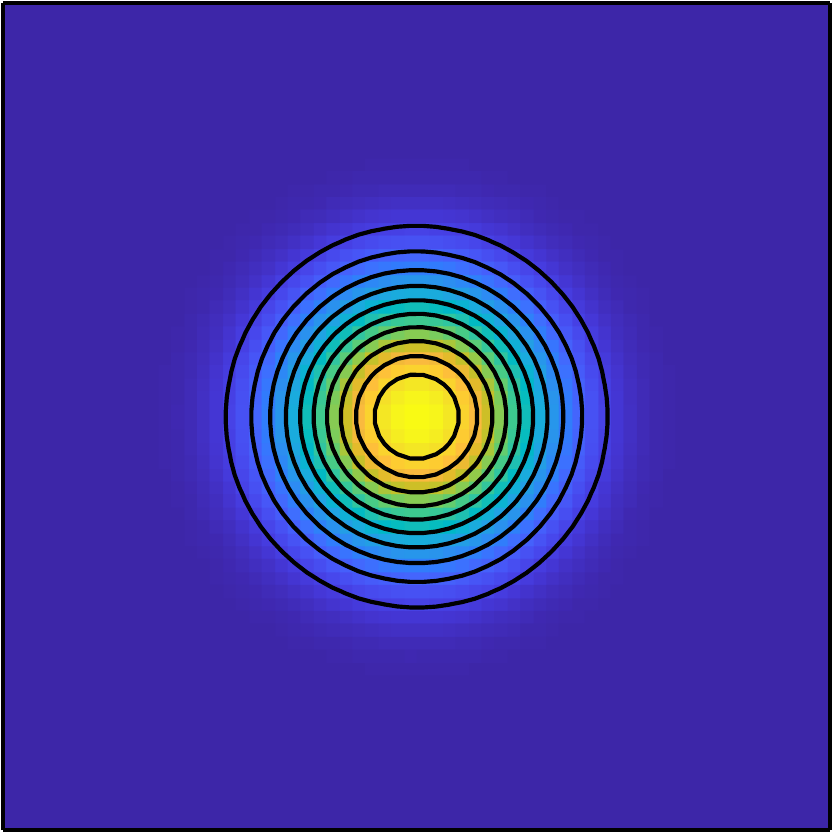}\hfill
\includegraphics[width=0.32\linewidth]{./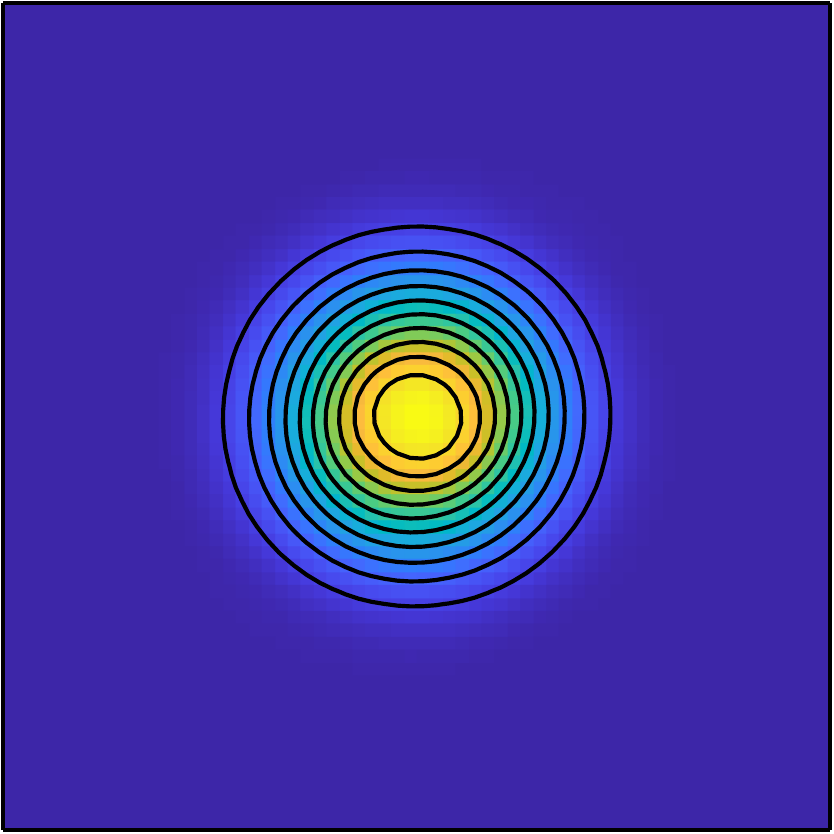}\hfill
\includegraphics[width=0.32\linewidth]{./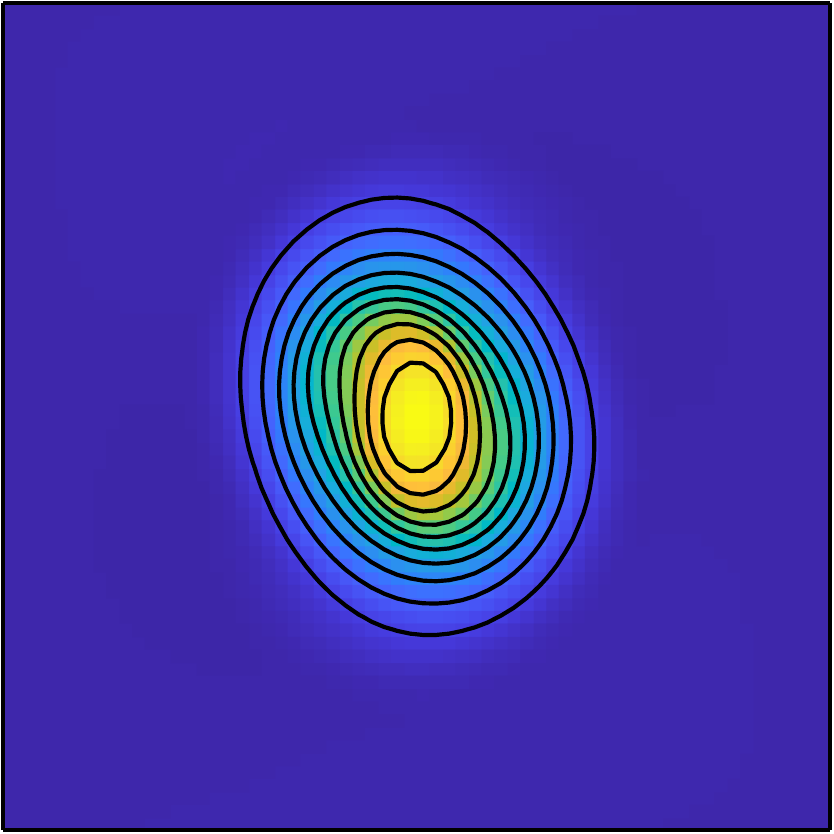}
\caption{\label{fig:ex2} Numerical results of density for \cref{ex:2}
  using a grid with $64^2$ grid cells.
  From left to right we show the solution at times $t=1$, $t=5$ and $t=10$. }  
\end{figure} 

In the reminder of this paper we
study the approximation of more challenging problems and in particular
of problems with discontinuities, where we can not
expect high order convergence rates. Therefore, we use versions
of the simplified method in the rest of the paper. We will see that
the compact stencil of our fully discrete Active Flux method provides
accurate results also for these more challenging problems. 

\ignore{
We introduce the notation $L_{EG2}(U',\tau,\mbox{rec})$ to write the
evolution operator  (\ref{eqn:EG2}) in a compact form and consider
$\tau \in \{\Delta t/2, \Delta t \}$ to compute the intermediate point
values $Q^{n+\frac{1}{2}}$ and the new point values $Q^{n+1}$ needed
in Simpson's formula. 
$U'$ denotes the state, expressed in primitive variables,
around which we linearise the Euler equations and
the parameter $\mbox{rec}$ denotes the choice of the reconstruction.
The application of the evolution operator requires a representation of
the solution in primitive variables at time $t_n$. In order to achieve
third order accuracy, the classical Active Flux
method uses a continuous, piecewise quadratic reconstruction, which
interpolates the point value degrees of freedom and preserves
the cell averages. For two-dimensional Cartesian grids, the basis
function used for the Active
Flux reconstruction can be found in previous publications,
see for example \cite{article:HKS2019}, and will be omitted here.
Note however, that the evolution operator (\ref{eqn:EG2}) requires a
reconstruction in primitive variables. Point values can easily be
converted between primitive and conservative variables. We denote the
mapping, which convertes conservative variables to primitive variables,
by $C2P$ and the mapping, which convertes point values in primitiv
variables to point values in conservative variables by $P2C$. To convert the
cell average values, without loosing third order accuracy,
some more work is needed. The finite
volume method (\ref{eqn:2dfvm}) provides cell average values of
the conservative variables. To obtain cell average values in primitive
variables, we can compute the piecewise quadratic
reconstruction in conservative variables and evaluate this
reconstruction at the midpoint of the grid cell, i.e. at the point
$(x_i,y_j)$. 
We obtain a point value in conservative variables, which can again
easily be converted to a point value in primitive variables
at the cell center. From point values along the grid cell
boundary and the center of the grid cell we can now compute cell average values
in primitive variables using Simpson's rule. This finally allows to compute
a continuous, piecewise quadratic (cpq) reconstruction in primitive
variables.
In this article, we will also use a  piecewise constant (pc)
reconstruction. Thus, we have $\mbox{rec} \in \{ \mbox{cpq}, \mbox{pc} \}$. 

It remains to discuss the local linearisation $U'=(\rho',u',v',p')^T$,
used in the EG2 evolution operator, when approximating
the nonlinear Euler equations. Note that our error analysis is an
extension of the analysis provided in \cite[Lemma 4.1]{article:LSW2002}.
It compares, for a single time step of length $\Delta t$,  the
solution of (\ref{eqn:quasilinearEuler}), with the
solution of the linear system
\begin{equation}
\partial_t \mathbf{v} + \tilde{A}
\partial_x \mathbf{v} + \tilde{B}
\partial_y \mathbf{v} = 0,
\end{equation}
with  $\tilde{A}=A(\mathbf{u}(\overline{\mathbf{x}},t_n+k))$,
$\tilde{B}=B(\mathbf{u}(\overline{\mathbf{x}},t_n+k))$ and
$k \in [0,\Delta t]$ such that the linearisation error is minimal.
Both problems are solved using the same  smooth initial values
$\mathbf{u}(\mathbf{x},t_n)  = \mathbf{v}(\mathbf{x},t_n)$. 

Using Taylor series expansion, the one step error can be expressed in
the form
\begin{equation}\label{eqn:errorEstimate}
  \begin{aligned}
\mathbf{u}(\overline{\mathbf{x}},t_n+\Delta t) - \mathbf{v}(\overline{\mathbf{x}},t_n+\Delta
t) & = \mathbf{u}(\overline{\mathbf{x}},t_n) - \mathbf{v}(\overline{\mathbf{x}},t_n) \\
& + \Delta t \partial_t \left( \mathbf{u}(\overline{\mathbf{x}},t)
  - \mathbf{v}(\overline{\mathbf{x}},t) \right) \big{|}_{t=t_n} \\
& + \frac{1}{2} \Delta t^2 \partial_{tt} \left( \mathbf{u}(\overline{\mathbf{x}},t)
  - \mathbf{v}(\overline{\mathbf{x}},t) \right) \big{|}_{t=t_n} +
{\cal O}(\Delta t^3).
  \end{aligned}
\end{equation}
The first term on the right hand side vanishes due to the condition on
the initial values. For the estimate of the second
term we  use
\begin{equation*}
  \begin{aligned}
    \tilde{A} & = A(\mathbf{u}(\overline{\mathbf{x}},t_n + k)) \\
    & = A\left(\mathbf{u}(\overline{\mathbf{x}},t_n) + k \partial_t
    \mathbf{u}(\overline{\mathbf{x}},t) \Big{|}_{t=t_n} + {\cal
      O}(k^2) \right)\\
  & = A(\mathbf{u}(\overline{\mathbf{x}},t_n)) + \frac{\partial
    A}{\partial \mathbf{u}} (u(\overline{\mathbf{x}},t_n)) \left( k
    \partial_t \mathbf{u}(\overline{\mathbf{x}},t)\Big{|}_{t=t_n} +
    {\cal O}(k^2) \right) \\
  & = A(\mathbf{u}(\overline{\mathbf{x}},t_n)) + k \frac{\partial
    A}{\partial \mathbf{u}} (u(\overline{\mathbf{x}},t_n)) \left( -
    A(\mathbf{u}(\overline{\mathbf{x}},t_n)) \partial_x
    \mathbf{u}(\overline{\mathbf{x}},t_n)
    -  B(\mathbf{u}(\overline{\mathbf{x}},t_n)) \partial_y
    \mathbf{u}(\overline{\mathbf{x}},t_n) \right) \\
  & \hspace*{5cm} + {\cal O}(k^2)
  \end{aligned}
\end{equation*}
and analogously
\begin{equation*}
  \begin{aligned}
    \tilde{B} & = B(\mathbf{u}(\overline{\mathbf{x}},t_n)) + k \frac{\partial
    B}{\partial \mathbf{u}} (u(\overline{\mathbf{x}},t_n)) \left( -
    A(\mathbf{u}(\overline{\mathbf{x}},t_n)) \partial_x
    \mathbf{u}(\overline{\mathbf{x}},t_n)
    -  B(\mathbf{u}(\overline{\mathbf{x}},t_n)) \partial_y
    \mathbf{u}(\overline{\mathbf{x}},t_n) \right) \\
  & \hspace*{5cm} + {\cal O}(k^2).
\end{aligned}
\end{equation*}
Thus, we obtain
\begin{equation}
  \begin{split}
&     \partial_t \left( \mathbf{u}(\overline{\mathbf{x}},t) -
  \mathbf{v}(\overline{\mathbf{x}},t) \right) \Big{|}_{t=t_n}   \\
& = 
 -A(\mathbf{u}(\overline{\mathbf{x}},t_n)) \partial_x
 \mathbf{u}(\overline{\mathbf{x}},t_n)
 - B(\mathbf{u}(\overline{\mathbf{x}},t_n)) \partial_y
 \mathbf{u}(\overline{\mathbf{x}},t_n) + \tilde{A} \partial_x
 \mathbf{v}(\overline{\mathbf{x}},t_n) + \tilde{B} \partial_y
 \mathbf{v}(\overline{\mathbf{x}},t_n) \\
 & = -k \frac{\partial A}{\partial \mathbf{u}}
 (\mathbf{u}(\overline{\mathbf{x}},t_n)) \left(
   A(\mathbf{u}(\overline{\mathbf{x}},t_n)) \partial_x
   \mathbf{u}(\overline{\mathbf{x}},t_n) +
   B(\mathbf{u}(\overline{\mathbf{x}},t_n))\partial_y
   \mathbf{u}(\overline{\mathbf{x}},t_n)\right) \partial_x
 \mathbf{v}(\overline{\mathbf{x}},t_n)\\
 & \quad -k \frac{\partial B}{\partial \mathbf{u}}
 (\mathbf{u}(\overline{\mathbf{x}},t_n)) \left(
   A(\mathbf{u}(\overline{\mathbf{x}},t_n)) \partial_x
   \mathbf{u}(\overline{\mathbf{x}},t_n) +
   B(\mathbf{u}(\overline{\mathbf{x}},t_n))\partial_y
   \mathbf{u}(\overline{\mathbf{x}},t_n)\right) \partial_y
 \mathbf{v}(\overline{\mathbf{x}},t_n)\\
 & \hspace*{10cm} + {\cal O}(k^2)
  \end{split}
\end{equation}
}
\section{Approximation of discontinuous solutions}
\label{sec:3}
Now we discuss modifications of \cref{alg:AFEu}, which are needed
for the simulation of discontinuous solutions.

\subsection{Local linearisation near transonic shock waves}
It is known, see \cite{article:HKS2019}, that Active Flux
methods for Burgers' equation can lead to unstable approximations of
shock waves  when the characteristic speed changes sign across the
shock while the local linearisation is computed from
the point value degrees of freedom.
In \cite{article:Barsukow2021,article:CHK2021}
it was shown that a stable approximation can
be obtained if the linearisation is instead
computed from averages of neighbouring grid cell values.
For the Euler equations an analogous problem arises for the
approximation of transonic shock waves and is illustrated in \cref{fig:transonicShock}.
\begin{figure}
(a)\includegraphics[scale=0.35]{./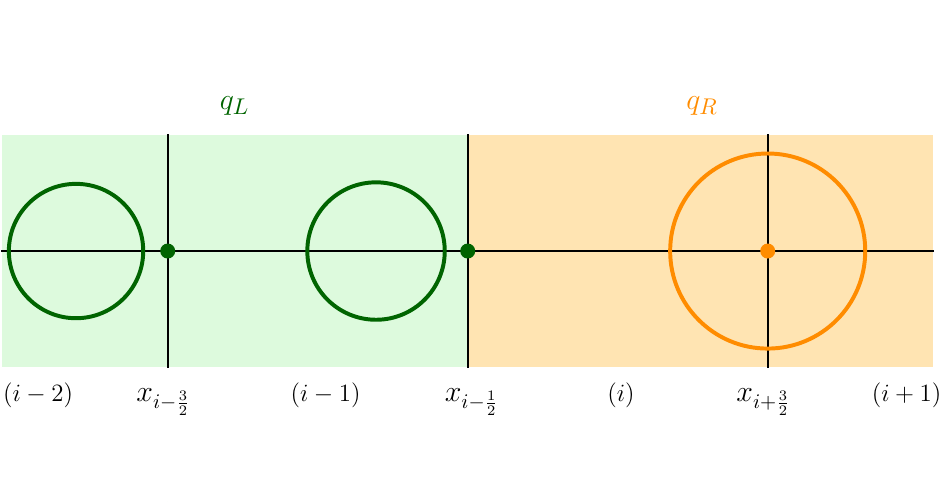}\hfill
(b)\includegraphics[scale=0.35]{./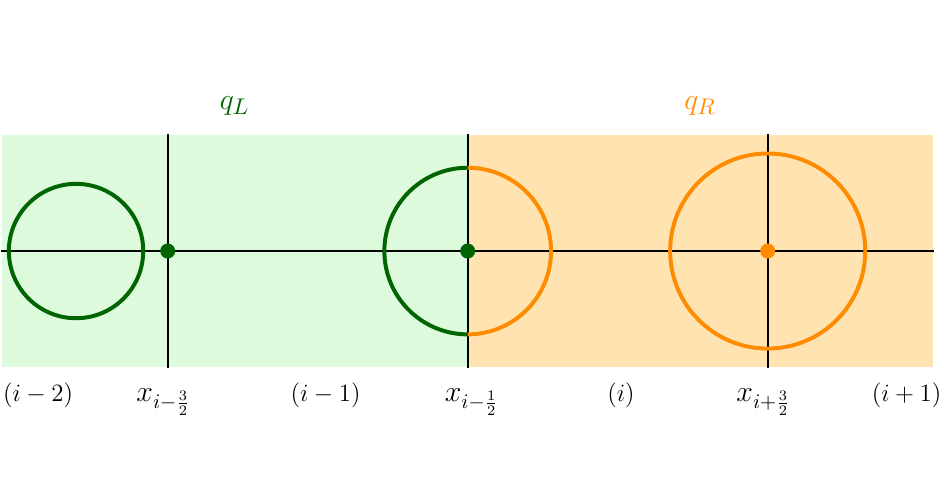}
\caption{\label{fig:transonicShock}Illustration of the update of the
  point values for a transonic left moving shock wave. In (a) the
  point values at the corners are used for linearisation, in (b) the
  neighbouring cell averages.}
\end{figure}
In (a) we illustrate the case of a local linearisation around
the point value degrees of freedom.  Initially, we assume that the point value at the
	interface $x_{i-\frac{1}{2}}$ between cell $(i-1)$ and $(i)$ is equal to the left
	state. Our evolution operator will integrate over a circle which lies
completely inside the left region and thus the point value at
this interface remains constant for all times. As a result, the shock wave can't
move to the left hand side and an instability might grow starting from
the third grid cell.
In (b) we illustrate a linearisation around the average
of the two neighbouring cell averages. The point values are no
longer constant in time which allows the shock to move. 

Since our accuracy analysis reveals that a second order accurate
approximation of the state used in the local linearisation is
sufficient, averaging in space does not reduce the accuracy for
smooth flow problems. 
We therefore adapt (\ref{eqn:1stStep}) as follows:
For point value degrees of freedom located at vertices we use the four neighbouring grid cells to
compute the linearisation, i.e., we compute
\begin{equation}\label{eqn:Un+1/2}
  \begin{split}
\tilde{U}_{i-\frac{1}{2},j-\frac{1}{2}}^n & = \frac{1}{4}\left( \bar{U}_{i-1,j-1}^n+\bar{U}_{i,j-1}^n+\bar{U}_{i-1,j}^n+\bar{U}_{i,j}^n\right),\\    
U_{i-\frac{1}{2},j-\frac{1}{2}}^{n+\frac{1}{2}} & =
L_{EG2}(\tilde{U}_{i-\frac{1}{2},j-\frac{1}{2}}^n,\Delta t/2,\mbox{cpq}) +
C(x_{i-\frac{1}{2}},y_{j-\frac{1}{2}},t_n,\Delta t/2).
  \end{split}
\end{equation}
At edge midpoints we linearise around the two neighbouring averages
in primitive variables, i.e.\
\begin{equation}
  \begin{split}
\tilde{U}_{i-\frac{1}{2},j}^n & = \frac{1}{2}\left( \bar{U}_{i-1,j}^n+\bar{U}_{i,j}^n\right),\\    
U_{i-\frac{1}{2},j}^{n+\frac{1}{2}} & =
L_{EG2}(\tilde{U}_{i-\frac{1}{2},j}^n,\Delta t/2,\mbox{cpq}) +
C(x_{i-\frac{1}{2}},y_{j},t_n,\Delta t/2).
  \end{split}
\end{equation}
and analogously for all other point values. Convergence studies for
the smooth test problems described in 
\cref{ex:1} and \cref{ex:2} using this linearisation lead to almost
identical results as those shown in \cref{tab:ex1,tab:ex2} and are
therefore omitted.

In addition, we will alter the second step of \cref{alg:AFEu} near
transonic shock waves. Let $c_{ij}^n = \sqrt{\gamma
  p_{ij}^n/\rho_{ij}^n}$ denote the speed of sound in grid cell
$(i,j)$ computed from the cell average values of the primitive
variables at time $t_n$. 
The modification of the second step is described in
\cref{alg:AFEuTransonicShocks}. There we introduced a variable
${\cal I}^{tsw}_{i-\frac{1}{2},j-\frac{1}{2}}$, which is an indicator
  for a transonic shock wave in the neighbourhood of the point value
  with index $(i-\frac{1}{2},j-\frac{1}{2})$.
  We check for all corner point values whether they
    lie in the neighbourhood of a transonic shock and use the modified
  linearisation if needed. For midpoints of an edge we adapt the
  linearisation if at least one of the endpoints of the edge have been
detected to lie in the vicinity of a transonic shock.

\begin{algorithm}
	\caption{Modification of second step of \cref{alg:AFEu} for
		transonic shock waves}
	\label{alg:AFEuTransonicShocks}
	\begin{algorithmic}

\vspace*{0.2cm}
\STATE{$\mathcal{I}^{tsw}_{i-\frac{1}{2},j-\frac{1}{2}} = 0,\,
  \mathcal{I}^{tsw}_{i+\frac{1}{2},j-\frac{1}{2}} = 0,\, \mathcal{I}^{tsw}_{i-\frac{1}{2},j+\frac{1}{2}} = 0$ }
\FOR{$\left(k,l\right) \text{in} \left\{\left(0,0\right),\left(1,0\right),\left(0,1\right)\right\}$}
\IF{($(u_{i-1+k,j+l}^n\pm c_{i-1+k,j+l}^n)>0$ and $(u_{i+k,j+l}^n \pm
	c_{i+k,j+l}^n)<0$) or \\
	\quad ($(u_{i-1+k,j-1+l}^n\pm c_{i-1+k,j-1+l}^n)>0$ and $(u_{i+k,j-1+l}^n \pm
	c_{i+k,j-1+l}^n)<0$) or \\
	\quad ($(v_{i-1+k,j-1+l}^n \pm c_{i-1+k,j-1+l}^n)>0$ and $(v_{i-1+k,j+l}^n\pm
	c_{i-1+k,j+l}^n)<0$) or \\
	\quad ($(v_{i+k,j-1+l}^n \pm c_{i+k,j-1+l}^n)>0$ and $(v_{i+k,j+l}^n\pm
	c_{i+k,j+l}^n)<0$)\\ }
\STATE{$\mathcal{I}^{tsw}_{i+k-\frac{1}{2},j+l-\frac{1}{2}}=1$}
\ENDIF
\ENDFOR
\vspace*{0.2cm}
\STATE{$\tilde{U}_{i-\frac{1}{2},j}=U_{i-\frac{1}{2},j}^{n+\frac{1}{2}}$}
\IF{$\max \left\{\mathcal{I}^{tsw}_{i-\frac{1}{2},j-\frac{1}{2}}, \mathcal{I}^{tsw}_{i-\frac{1}{2},j+\frac{1}{2}}\right\}=1$}
\STATE{
	$\tilde{U}_{i-\frac{1}{2},j}  = \frac{1}{2}
	\left(\bar{U}_{i-1,j}^n+\bar{U}_{i,j}^n
	\right)$\\
}
\ENDIF
\STATE{
	$U_{i-\frac{1}{2},j}^{n+1}  =
	L_{EG2}(\tilde{U}_{i-\frac{1}{2},j},\Delta t,cpq) +
	C(x_{i-\frac{1}{2}},y_j,\Delta t)$  
}
\vspace*{0.2cm}
\STATE{$\tilde{U}_{i,j-\frac{1}{2}}=U_{i,j-\frac{1}{2}}^{n+\frac{1}{2}}$}
\IF{$\max \left\{\mathcal{I}^{tsw}_{i-\frac{1}{2},j-\frac{1}{2}}, \mathcal{I}^{tsw}_{i+\frac{1}{2},j-\frac{1}{2}}\right\}=1$}
\STATE{
	$\tilde{U}_{i,j-\frac{1}{2}}  = \frac{1}{2}
	\left(\bar{U}_{i,j-1}^n+\bar{U}_{i,j}^n
	\right)$\\
}
\ENDIF
\STATE{
	$U_{i,j-\frac{1}{2}}^{n+1}  =
	L_{EG2}(\tilde{U}_{i,j-\frac{1}{2}},\Delta t,cpq) +
	C(x_{i},y_{j-\frac{1}{2}},\Delta t)$  
}
\vspace*{0.2cm}
\STATE{$\tilde{U}_{i-\frac{1}{2},j-\frac{1}{2}}=U_{i-\frac{1}{2},j-\frac{1}{2}}^{n+\frac{1}{2}}$}
\IF{$\mathcal{I}^{tsw}_{i-\frac{1}{2},j-\frac{1}{2}}=1$ }
\STATE{$\tilde{U}_{i-\frac{1}{2},j-\frac{1}{2}}= \frac{1}{4}\left(\bar{U}_{i-1,j-1}^n+\bar{U}_{i,j-1}^n+\bar{U}_{i-1,j}^n+\bar{U}_{i,j}^n \right)$}
\ENDIF
\STATE{
	$U_{i-\frac{1}{2},j-\frac{1}{2}}^{n+1}  =
	L_{EG2}(\tilde{U}_{i-\frac{1}{2},j-\frac{1}{2}},\Delta t,cpq) +
	C(x_{i-\frac{1}{2}},y_{j-\frac{1}{2}},\Delta t)$  
}
\vspace*{0.5cm}
\STATE{{\small Note: Within each conjunction $\pm$ has the same sign
		and both cases need to be considered.}}
	\end{algorithmic}
\end{algorithm}

\subsection{Approximation of rarefaction waves}
We did not observe problems for the approximation of
transonic rare\-faction waves using
our third order accurate Active Flux method with
continuous piecewise quadratic reconstruction as described in \cref{alg:AFEu}.
This is in contrast to related finite volume methods
that use a piecewise constant reconstruction and require a special treatment
near transonic rarefaction waves (see, for example,
\cite{proc:LT2009}). We illustrate the performance of the method in a test computation.
\begin{example}\label{ex:transonicRarefaction}
  We consider the Euler equations with initial values of the form
  $$
  (\rho_0,u_0,v_0,p_0) = \left\{ \begin{array}{ccc}
                                   (0.1,-2,0,0.1) & : & x<0, \\
                                   (0.55,-1.5,0,0.55) & : & x=0,\\
                                   (1,-1,0,1) & : & x>0.
                                 \end{array}\right.
                               $$
The solution at time $t=0.4$ is computed on the intervall $[-1.5,0.5]$.                              
\end{example}
This is again a one-dimen\-sio\-nal problem which we approximate with our
two-dimen\-sio\-nal method using only few grid cells in the
$y$-direction and periodic boundary conditions.
\begin{figure}[htb]
	(a) \includegraphics[width=0.45\textwidth]{./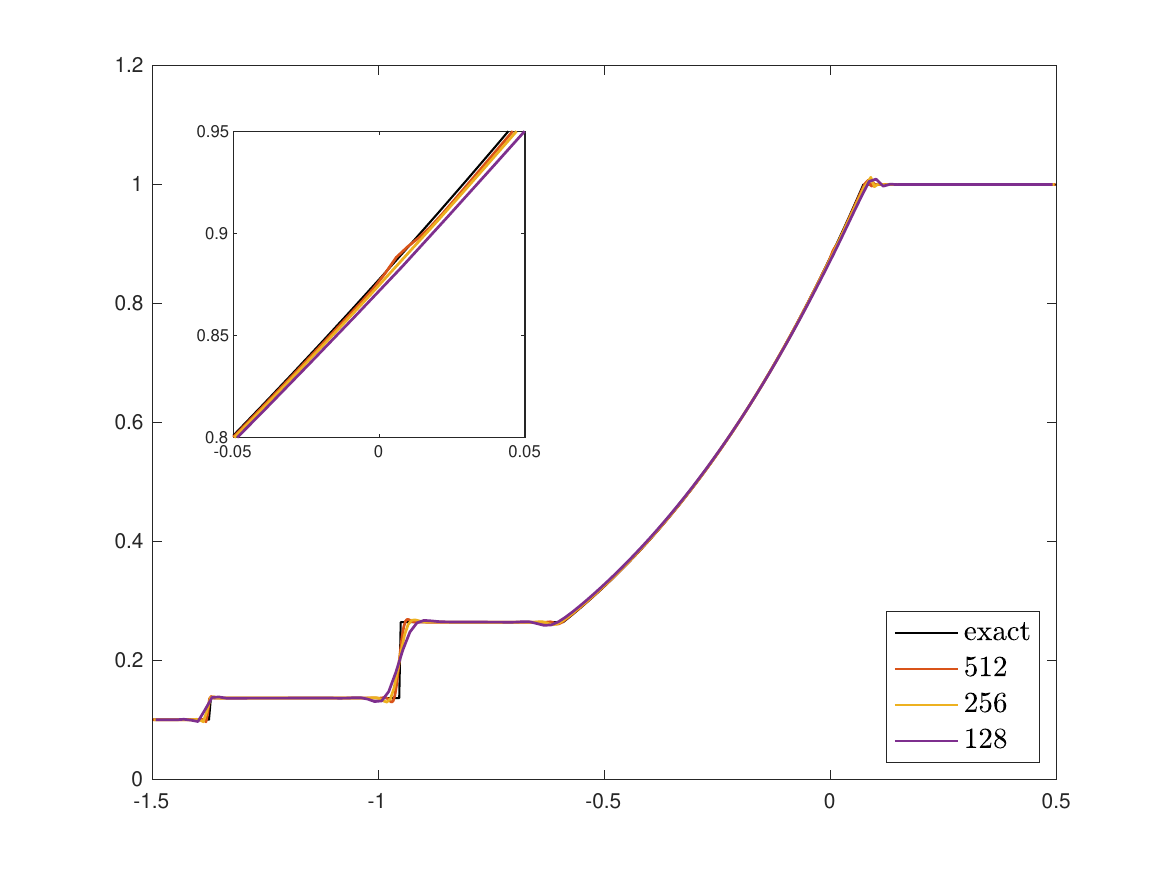}
	(b) \includegraphics[width=0.45\textwidth]{./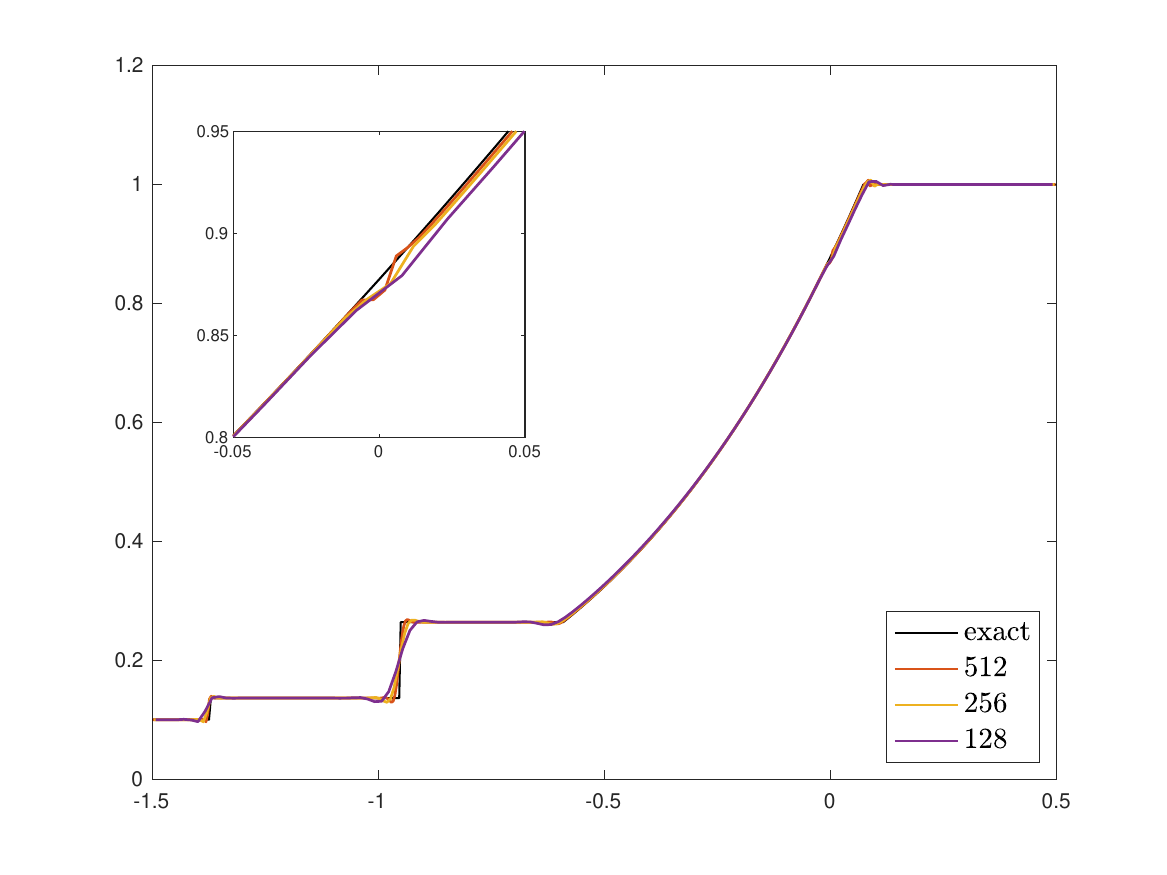}
	\caption{\label{fig:transonic} Numerical results for
          \cref{ex:transonicRarefaction} using (a) the third order accurate Active Flux method without limiting and (b) a version of the Active Flux method which uses the linearisation proposed for transonic shock waves.
	}
\end{figure}
In \cref{fig:transonic} (a) we present results obtained using our third order accurate Active Flux method without any limiters. We observe an accurate approximation of the transonic rarefaction wave without any entropy-fix.
In (b) we changed the local linearisation for the computation of
the full time step $t_{n+1}$ in such a way,
that we linearised around average values at time $t_n$, i.e., we used
the same linearisation for the half and the full time step.
While such a linearisation was needed for transonic shock waves, it  
deteriorates the accuracy of transonic rarefaction waves.

Another challenging one-dimensional test is the double
rarefaction wave problem from \cite{LindeRoe}.
The exact solution structure contains a vacuum and numerical methods
typically require some form of bound preserving limiting. 
\begin{example}\label{ex:double}
  We consider the Euler equations with initial values of the form
  $$
  (\rho_0,u_0,v_0,p_0) = \left\{ \begin{array}{ccc}
                                   (7,-1,0,0.2) & : & x<0.5, \\
                                   (7,0,0,0.2) & : & x=0.5,\\
                                   (7,1,0,0.2) & : & x>0.5.
                                                     \end{array}\right.
  $$
Solutions at time $t=0.3$ are computed on the intervall $[0,1]$.                                      
\end{example}
In \cref{fig:double} we show numerical results for \cref{ex:double} 
using three different grid resolutions. Our unlimited third order accurate
Active Flux method, see \cref{fig:double} (a), performs well for this
problem but shows some spurious
oscillations at the onset of the waves. Using the limiting of point
values, described below in
\cref{sec:shockIndicator}, these oscillations are reduced as shown in
\cref{fig:double} (b).
\begin{figure}[htb]
(a) \includegraphics[width=0.45\textwidth]{./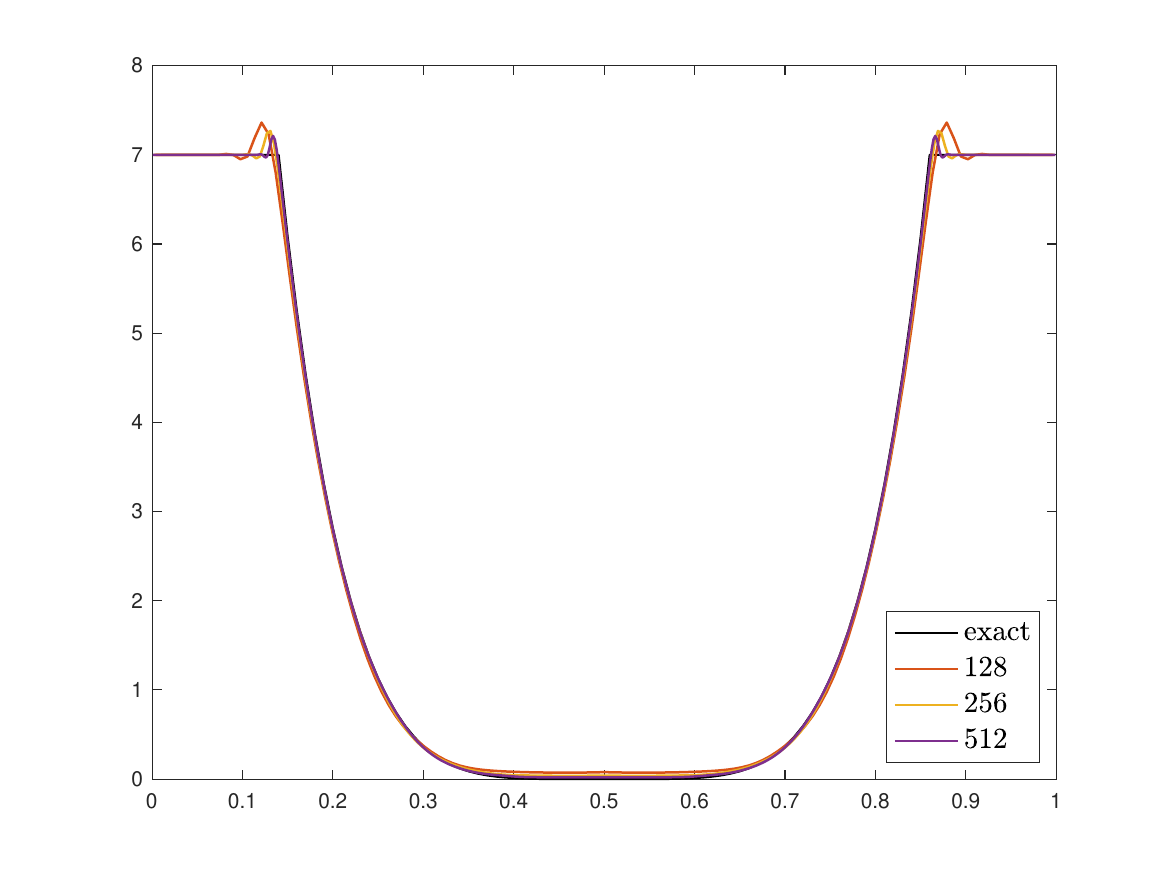}
(b) \includegraphics[width=0.45\textwidth]{./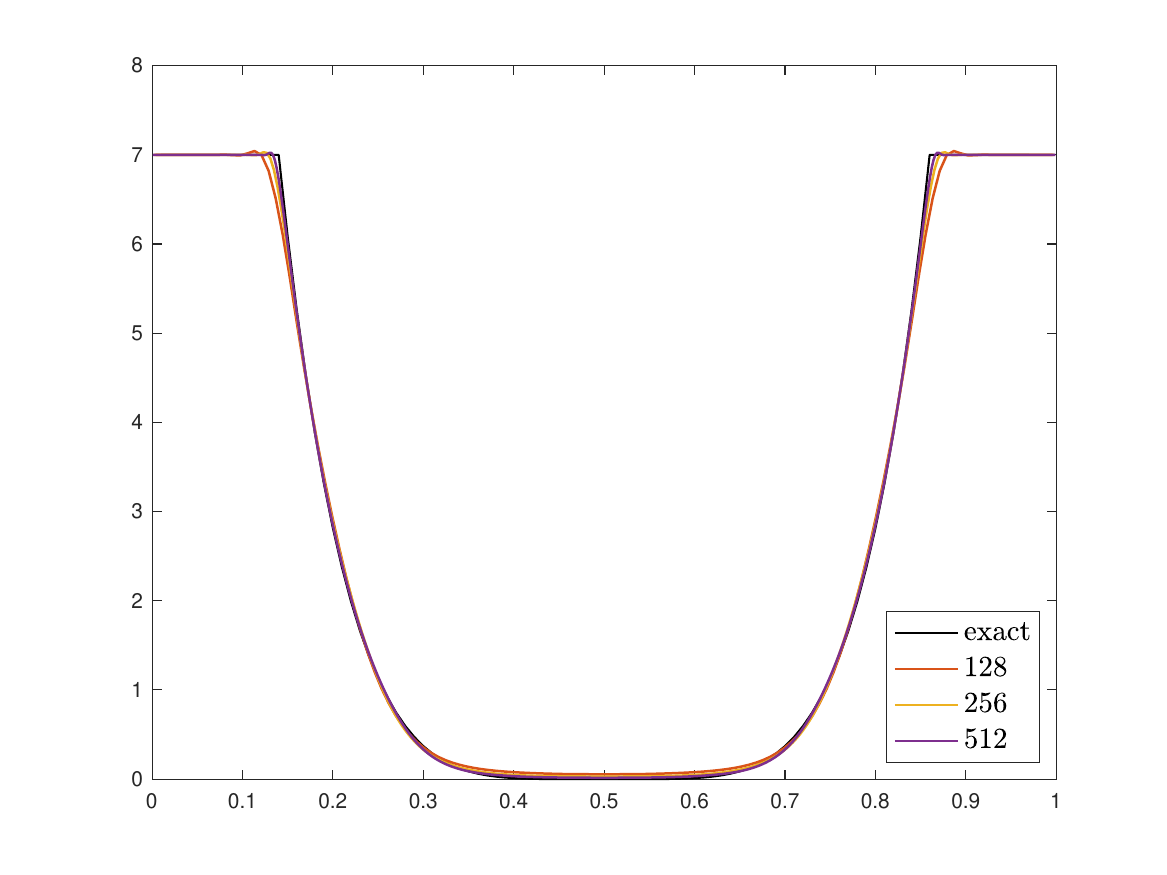}
\caption{\label{fig:double}
Plots of density for the double rarefaction problem given in
Example \ref{ex:double}. The solutions were computed on grids with
$128, 256$ and $512$ cells using
(a) no limiting and (b) limiters to reduce oscillations as described below.  
}
\end{figure}
In \cite[Supplementary Materials]{article:DBK2025}, the authors show
results of their one-dimensional generalised Active Flux methods, and
report that all tested versions of their method 
required bound preserving limiting. With our two-dimensional fully
discrete approach we obtain a stable approximation as long as the
linearisation for the computation of the full time step is performed
as predicted by our accuracy study and described in
\cref{alg:AFEu}. By instead linearising around the old time level (for
both the half and the new time step) or
the new time level (using an iterative approach), we observed
unphysical approximations.

\subsection{Bound-preserving approximation of point value degrees of
  freedom}
\label{sec:boundPreservingPointValues}
The EG2 evolution operator applied to the piecewise quadratic Active
Flux reconstruction does not guarantee to provide point values with
positive density and positive pressure. The same problem arises for
other high-order methods, in particular for
the method of lines approach studied in
\cite{preprint:ALB2025,article:DBK2025}, which uses the same degrees
of freedom. Those authors suggested to limit
the point values by using a linear combination of the
high order update and a Lax-Friedrichs update, that is guaranteed to be
bound preserving. 
In order to use a truly multi-dimensional approach we instead propose to replace
point values with unphysical density or pressure by  point values
computed using the method of bicharacteristics for piecewise
constant data.
Corresponding approximative  evolution operators have been proposed,
see \cite{article:LSW2002}, under the name EG1 and EG3. Here we use the EG1 method, almost identical
results have been obtained by using EG3.
\begin{algorithm}
  \caption{Bound-preserving evolution of point values located at 
    vertices}
  \label{alg:limPointValues}
  \begin{algorithmic}
\STATE{Compute high order accurate approximation
  $U_{i-\frac{1}{2},j-\frac{1}{2}}^{n+\frac{1}{2},(ho)}$ using (\ref{eqn:Un+1/2}).}\\

\STATE{Set $U_{i-\frac{1}{2},j-\frac{1}{2}}^{n+\frac{1}{2}} =
U_{i-\frac{1}{2},j-\frac{1}{2}}^{n+\frac{1}{2},(ho)}$}

\IF{$\rho_{i-\frac{1}{2},j-\frac{1}{2}}^{n+\frac{1}{2},(ho)}< 0$ or
  $p_{i-\frac{1}{2},j-\frac{1}{2}}^{n+\frac{1}{2},(ho)}< 0$}
\STATE{compute low order approximation
  \begin{align*}
U_{i-\frac{1}{2},j-\frac{1}{2}}^{n+\frac{1}{2},(lo)}  & :=
L_{EG1}(\tilde{U}_{i-\frac{1}{2},j-\frac{1}{2}}^n,\Delta t/2,pc) \hspace*{5cm}\\
\mbox{with} \quad \tilde{U}_{i-\frac{1}{2},j-\frac{1}{2}}^n & :=
\frac{1}{4}\left(\bar{U}_{i-1,j-1}^n+\bar{U}_{i,j-1}^n+\bar{U}_{i-1,j}^n+\bar{U}_{i,j}^n\right).
  \end{align*} }

\STATE{set $U_{i-\frac{1}{2},j-\frac{1}{2}}^{n+\frac{1}{2}} = U_{i-\frac{1}{2},j-\frac{1}{2}}^{n+\frac{1}{2},(lo)}$}\\[0.2cm]

\IF{$\rho_{i-\frac{1}{2},j-\frac{1}{2}}^{n+\frac{1}{2},(lo)}< 0$ or
  $p_{i-\frac{1}{2},j-\frac{1}{2}}^{n+\frac{1}{2},(lo)}< 0$}

\STATE{compute  
  \begin{align*}
    Q_{i-\frac{1}{2},j-\frac{1}{2}}^{n+\frac{1}{2},(LF)}  =
    Q_{i-\frac{1}{2},j-\frac{1}{2}}^{n}  & - \frac{\Delta t}{2\Delta
                                                                 x}(F_{i,j-\frac{1}{2}}^{LF}-F_{i-1,j-\frac{1}{2}}^{LF})
    \hspace*{3cm}\\
        & - \frac{\Delta t}{2\Delta y}(G_{i-\frac{1}{2},j}^{LF}-G_{i-\frac{1}{2},j-1}^{LF})                                                        
    \end{align*}}
\STATE{compute $U_{i-\frac{1}{2},j-\frac{1}{2}}^{n+\frac{1}{2}}$ from
  $Q_{i-\frac{1}{2},j-\frac{1}{2}}^{n+\frac{1}{2},(LF)}$}   
\ENDIF
\ENDIF
\vspace*{0.2cm}
\STATE{Point values at midpoints of edges are treated analogously.}
\vspace*{0.2cm}

\STATE{Point values at time $t_{n+1}$ are limited analogously.}  
\end{algorithmic}
\end{algorithm}
Unfortunately, there is no proof that these first order accurate
truly multi-dimensional evolution operators are guaranteed bound
preserving. Therefore, we suggest to evolve the point values using a
Lax-Friedrichs evolution operator if both the third and the first order  
multi-dimensional evolution operator computes an unphysical density or
pressure. Now the bound preservation of the point value
update follows from \cite[Lemma 4.6]{article:DBK2025}.
In practical computations we rarely observe unphysical results when
using the multidimensional,  first-order accurate point value update.

The update of point values at vertices is described in
\cref{alg:limPointValues}. There we used the superscript $(ho)$ and
$(lo)$ to indicate high and low order, respectively.
Midpoints of edges are limited analogously.

Using the definition of $P$, $P'$ and $Q(\theta)$
as introduced above, the EG1 operator can be found in \cite[Equations (2.17),(2.20),(2.22) and (2.23)]{article:LSW2002}.
\ignore{
\begin{equation}
	\label{eqn:EG1}
	\begin{aligned}
		\rho(P) & =\rho({P'})-\frac{p({P'})}{c'^2}+
		\frac{1}{2\pi}\int_{0}^{2\pi}\frac{p(Q(\theta))}{c'^2}-2\frac{\rho'}{c'}u(Q(\theta))
		\cos(\theta)\\
		& \hspace*{5.65cm} -2\frac{\rho'}{c'}v(Q(\theta)) \sin(\theta)
		\  {\rm d} \theta + {\cal O}(\tau^2)\\
		u(P) & =\frac{1}{2}u(P')+\frac{1}{2\pi}\int_{0}^{2\pi}
		-\frac{2p(Q(\theta))}{\rho'c'}\cos(\theta)+u(Q(\theta))
		\left(3\cos^2(\theta)-1\right) \\
		& \hspace*{4cm} +3v(Q(\theta)) \sin(\theta)\cos(\theta)
		\,  {\rm d} \theta + {\cal O}(\tau^2)\\
		v(P) & =\frac{1}{2}v(P')+\frac{1}{2\pi}\int_{0}^{2\pi}
		-\frac{2p(Q(\theta))}{\rho'c'}\sin(\theta)+3u(Q(\theta)) \sin(\theta)
		\cos(\theta)\\
		& \hspace*{4cm}+v(Q(\theta)) \left(3\sin^2(\theta)-1\right)  \,
		{\rm d} \theta + {\cal O}(\tau^2)\\
		p(P) &=\frac{1}{2\pi}\int_{0}^{2\pi}
		p(Q(\theta))  -2\rho'c'u(Q(\theta)) \cos(\theta) \\
		& \hspace*{4.2cm}
		-2\rho'c'v(Q(\theta)) \sin(\theta) \,  {\rm d} \theta + {\cal O}(\tau^2).
	\end{aligned}
      \end{equation}
}
We apply this operator to piecewise constant data using
the same local linearisation as for the high order method.

A Lax-Friedrichs update of point values in conservative variables
using only point value degrees of freedom can be found in
\cite[Equations (4.15)-(4.17)]{article:DBK2025}.
These point values can easily be converted to bound
  preserving point values in primitive variables.

\subsection{Additional limiting of point values based on a shock indicator}
\label{sec:shockIndicator}
Although the method described in \cref{alg:limPointValues} ensures
positivity of density and pressure for all point value degrees of freedom,
numerical simulations will typically exhibit  unphysical oscillations
in the vicinity of shock waves. These oscillations can be limited by
adding numerical viscosity.   Here we will explore limiting by using
a lower order point value
approximation. A shock indicator shows whether limiting should
be applied. In \cite{article:DBK2025} a similar limiting procedure for
fluxes was used.
Applying the limiter to point
  values in primitive variables allows us a more precise
  positioning of the additional numerical viscosity. 
We use
\begin{align*}
U_{i-\frac{1}{2},j}^{n+k,(lim)} & := \theta_{i-\frac{1}{2},j}
                                  U_{i-\frac{1}{2},j}^{n+k,(ho)} +
                                  \left(1-\theta_{i-\frac{1}{2},j}\right)U_{i-\frac{1}{2},j}^{n+k,(lo)},\\
U_{i-\frac{1}{2},j-\frac{1}{2}}^{n+k,(lim)} & := \theta_{i-\frac{1}{2},j-\frac{1}{2}}
                                  U_{i-\frac{1}{2},j-\frac{1}{2}}^{n+k,(ho)}
                                              +
                                              \left(1-\theta_{i-\frac{1}{2},j-\frac{1}{2}}\right)U_{i-\frac{1}{2},j-\frac{1}{2}}^{n+k,(lo)},
  \qquad
                                              k \in \left\{ \frac{1}{2},1 \right\},\\
\end{align*}
and analogously for $U_{i,j-\frac{1}{2}}^{n+k,(lim)}$.
Here $(lo)$ again indicates the solution obtained with the EG1 method
applied to piecewise constant data.
To avoid unnecessary limiting at contact discontinuities, the shock
indicator looks for changes in pressure as proposed in
\cite{article:DBK2025,JST1981}. Furthermore, we also take the wave
speed into account and add more limiting in regions of faster flow.
Thus, the parameter $\theta \in [0,1]$ is computed from two quantities
$\phi^{(1)}$ and $\phi^{(2)}$  via
$$
\theta =  \exp(-\phi^{(1)} \phi^{(2)}).
$$
The precise form of $\phi^{(1)}$ and $\phi^{(2)}$ depends on the
location of the point value that needs to be limited. We use
\begin{align*}
\phi_{i-\frac{1}{2},j}^{(1)} & = \max \left( \Big|
                               \frac{\bar{p}_{i+1,j}-2\bar{p}_{i,j}+\bar{p}_{i-1,j}}{\bar{p}_{i+1,j}+2\bar{p}_{i,j}+\bar{p}_{i-1,j}}\Big|,
                               \Big| \frac{\bar{p}_{i,j}-2\bar{p}_{i-1,j}+\bar{p}_{i-2,j}}{\bar{p}_{i,j}+2\bar{p}_{i-1,j}+\bar{p}_{i-2,j}}\Big|
                               \right),\\
\phi_{i,j-\frac{1}{2}}^{(1)} & =  \max \left( \Big|
                               \frac{\bar{p}_{i,j+1}-2\bar{p}_{i,j}+\bar{p}_{i,j-1}}{\bar{p}_{i,j+1}+2\bar{p}_{i,j}+\bar{p}_{i,j-1}}\Big|,
                               \Big| \frac{\bar{p}_{i,j}-2\bar{p}_{i,j-1}+\bar{p}_{i,j-2}}{\bar{p}_{i,j}+2\bar{p}_{i,j-1}+\bar{p}_{i,j-2}}\Big|
                               \right),\\
\ignore{\phi_{i-\frac{1}{2},j-\frac{1}{2}}^{(1)} & = \max\left( \phi_{i,j-\frac{1}{2}}^{(1)},\phi_{i-\frac{1}{2},j}^{(1)},\phi_{i-1,j-\frac{1}{2}}^{(1)},\phi_{i-\frac{1}{2},j-1}^{(1)}\right)  }
\ignore{  
\phi_{i-\frac{1}{2},j-\frac{1}{2}}^{(1)} & =  \max \left(
                               \Big|\frac{\bar{p}_{i+1,j}-2\bar{p}_{i,j}+\bar{p}_{i-1,j}}{\bar{p}_{i+1,j}+2\bar{p}_{i,j}+\bar{p}_{i-1,j}}\Big|,
                                           \Big|\frac{\bar{p}_{i,j}-2\bar{p}_{i-1,j}+\bar{p}_{i-2,j}}{\bar{p}_{i,j}+2\bar{p}_{i-1,j}+\bar{p}_{i-2,j}}\Big|,                                           
                                           \right.\\
  &  \left. \Big| \frac{\bar{p}_{i+1,j-1}-2\bar{p}_{i,j-1}+\bar{p}_{i-1,j-1}}{\bar{p}_{i+1,j-1}+2\bar{p}_{i,j-1}+\bar{p}_{i-1,j-1}}\Big|,
                               \Big|\frac{\bar{p}_{i,j-1}-2\bar{p}_{i-1,j-1}+\bar{p}_{i-2,j-1}}{\bar{p}_{i,j-1}+2\bar{p}_{i-1,j-1}+\bar{p}_{i-2,j-1}}\Big|
                                           \right)}
\end{align*}
and
\begin{align*}
\phi_{i-\frac{1}{2},j}^{(2)} & = 2^{\max( |\bar{u}_{i,j}| +
                               \bar{c}_{i,j},|\bar{u}_{i-1,j}|+\bar{c}_{i-1,j})},\\
\phi_{i,j-\frac{1}{2}}^{(2)} & = 2^{\max(|\bar{v}_{i,j}| +
                               \bar{c}_{i,j},|\bar{v}_{i,j-1}|+\bar{c}_{i,j-1})}.\\
\ignore{\phi_{i-\frac{1}{2},j-\frac{1}{2}}^{(2)} & = \max\left( \phi_{i,j-\frac{1}{2}}^{(2)},\phi_{i-\frac{1}{2},j}^{(2)},\phi_{i-1,j-\frac{1}{2}}^{(2)},\phi_{i-\frac{1}{2},j-1}^{(2)}\right).  }
\ignore{  
\phi_{i-\frac{1}{2},j-\frac{1}{2}}^{(2)} & = 2^{\max( w_{i,j} +
                               \bar{c}_{i,j},w_{i-1,j}+\bar{c}_{i-1,j},
                                           w_{i,j-1} +
                               \bar{c}_{i,j-1},w_{i-1,j-1}+\bar{c}_{i-1,j-1})}, } 
\end{align*}
\ignore{
with $w:= \max(|\bar{u}|,|\bar{v}|)$.
Analogously we compute $\phi_{i,j-\frac{1}{2}}^{(1)}$ and $\phi_{i,j-\frac{1}{2}}^{(2)}$.}
For the corners we use
\begin{align*}
	\theta_{i-\frac{1}{2},j-\frac{1}{2}} = \min \left\{ {\theta_{i-\frac{1}{2},j},\theta_{i-\frac{1}{2},j-1},\theta_{i,j-\frac{1}{2}},\theta_{i-1,j-\frac{1}{2}}}\right\}.
\end{align*}

 While our choice of $\phi^{(1)}$ uses an established
  approach from the literature, our choice of $\phi^{(2)}$ is
  experimentally. As shown in \cref{sec:4}, we obtain accurate results
  for a wide variety of classical test problems without using an additional
  tuning parameter. However, for flow problems in the low Mach number regime, our
  choice of $\phi^{(2)}$ might indicate the need for limiting although
  no limiting is needed and limiting might deteriorate the
  quality of the numerical approximations. It would be easy to include additional
  criteria or other definitions of $\phi^{(2)}$ to make the limiter
  applicable for a wider range of flow problems. The crucial step of
  our approach is that limiting is applied to point values in primitive
  variables based on criteria that can easily be adapted if needed. 

\ignore{
\textcolor{red}{As shown in \cref{sec:4}, the limiter yields good results for problems with discontinuous data. For other cases, such as low-Mach-number flows, the $\phi^{(2)}$ part may lead to unintended limiting in smooth regions, even though such problems do not require any limiting.}}

\subsection{Additional limiting of fluxes}
The use of limited point values does not guarantee that cell average
values of pressure and
density, computed from cell average values of the conserved
quantities, are positive. 
Computing the local linearisation from unphysical values of density
or pressure would dismantle the hyperbolic structure.
Therefore, the finite volume method for the evolution of the conserved
quantities might require an additional flux limiting.

Flux limiting procedures for related finite volume methods, which use
point values and cell average values as degrees of freedom, have
recently been proposed by Duan et al. \cite{article:DBK2025} and
Abgrall at al. \cite{preprint:ALB2025}. These methods built on earlier
work by Zhang and Shu \cite{article:ZS2010,article:ZS2010b}, Wu and
Shu \cite{article:WS2023} as well as Kuzmin 
\cite{article:Kuzmin2020}.

The general idea, in analogy to the limiting of point values, is
to replace the fluxes in the finite volume method
(\ref{eqn:2dfvm}) with limited fluxes of the form
\begin{align*}
F_{i-\frac{1}{2},j}^{(lim)} & := \gamma_{i-\frac{1}{2},j}
  F_{i-\frac{1}{2},j}^{(ho)} + (1-\gamma_{i-\frac{1}{2},j})
                            F_{i-\frac{1}{2},j}^{(lo)},\\
G_{i,j-\frac{1}{2}}^{(lim)} & :=
                            \gamma_{i,j-\frac{1}{2}}G_{i,j-\frac{1}{2}}^{(ho)}
                            + (1-\gamma_{i,j-\frac{1}{2}})G_{i,j-\frac{1}{2}}^{(lo)}.  
\end{align*}
Here the high order fluxes are computed using Simpson's rule with
possibly limited point values as described above and the low order
flux guaranties the bound preserving properties of the resulting cell average values.
The parameter $\gamma_{i-\frac{1}{2},j} \in [0,1]$ needs to be chosen as large as possible
under the restriction that the cell average values are bound
preserving. The difficulty is that the pressure depends on the
density and that $ \gamma_{i-\frac{1}{2},j}$ must be selected so that
the cell averages of both variables are positive.

The approach from \cite[Section 4.1.2]{article:DBK2025} could be applied to our
fully discrete finite volume method to provide bound preservation of
cell average values. 
However, for all test problems studied in this paper, a carefully chosen local
linearisation and the limiting of point values
as described in \cref{sec:boundPreservingPointValues} and
\cref{sec:shockIndicator} was sufficient to get admissible cell
average values. Therefore, we did not use additional flux limiting.

\section{Discussion of numerical results}
\label{sec:4}
We now present computational results which illustrate the
performance of our method for two-dimensional problems
with discontinuous solution structures. 
\subsection{Two-dimensional Riemann problems}
Now we present computational results for several classical two-dimensional
Riemann problems described in \cite{article:RCG1993}. All test
problems are computed on the domain $[0,1]\times [0,1]$ with outflow
boundary conditions. The initial values consist of
four constant states in the four quadrants. We start with
Configuration F, which contains  two shock waves and two stationary
contact discontinuities. This relatively simple configuration can be
computed without any limiting. Numerical results for such a
computation are shown in  \cref{fig:RP12} (top). This unlimited solution shows
small oscillations near the shock waves, which are only visible in the
plot of density  for constant $y$ shown in \cref{fig:Config12_yfix}.
By using the parameter free limiting
described in \cref{sec:shockIndicator} these oscillations can be
elimited as shown in \cref{fig:RP12} (bottom) and \cref{fig:Config12_yfix}.  
\begin{figure}
  \includegraphics[width=0.32\linewidth]{./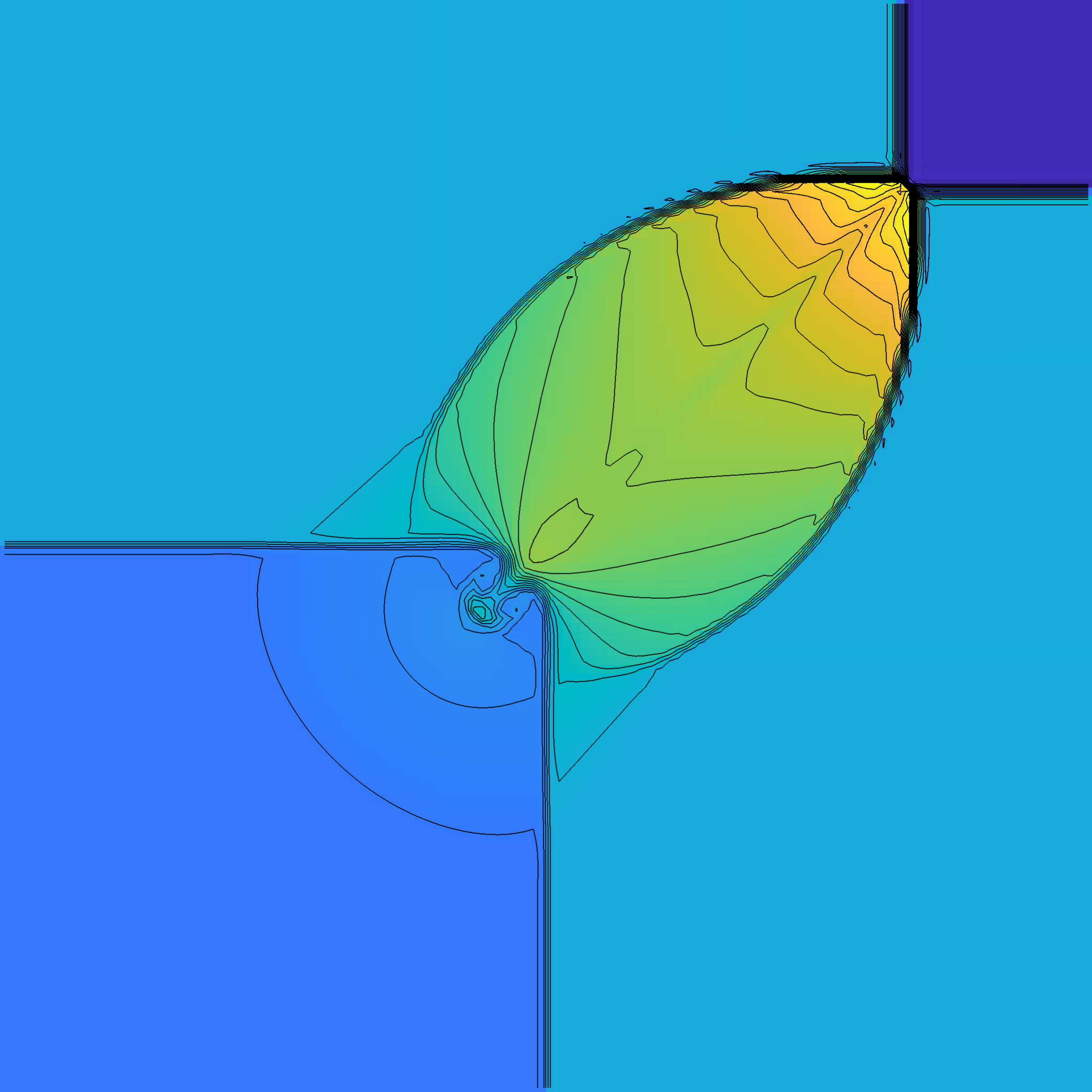}
  \includegraphics[width=0.32\linewidth]{./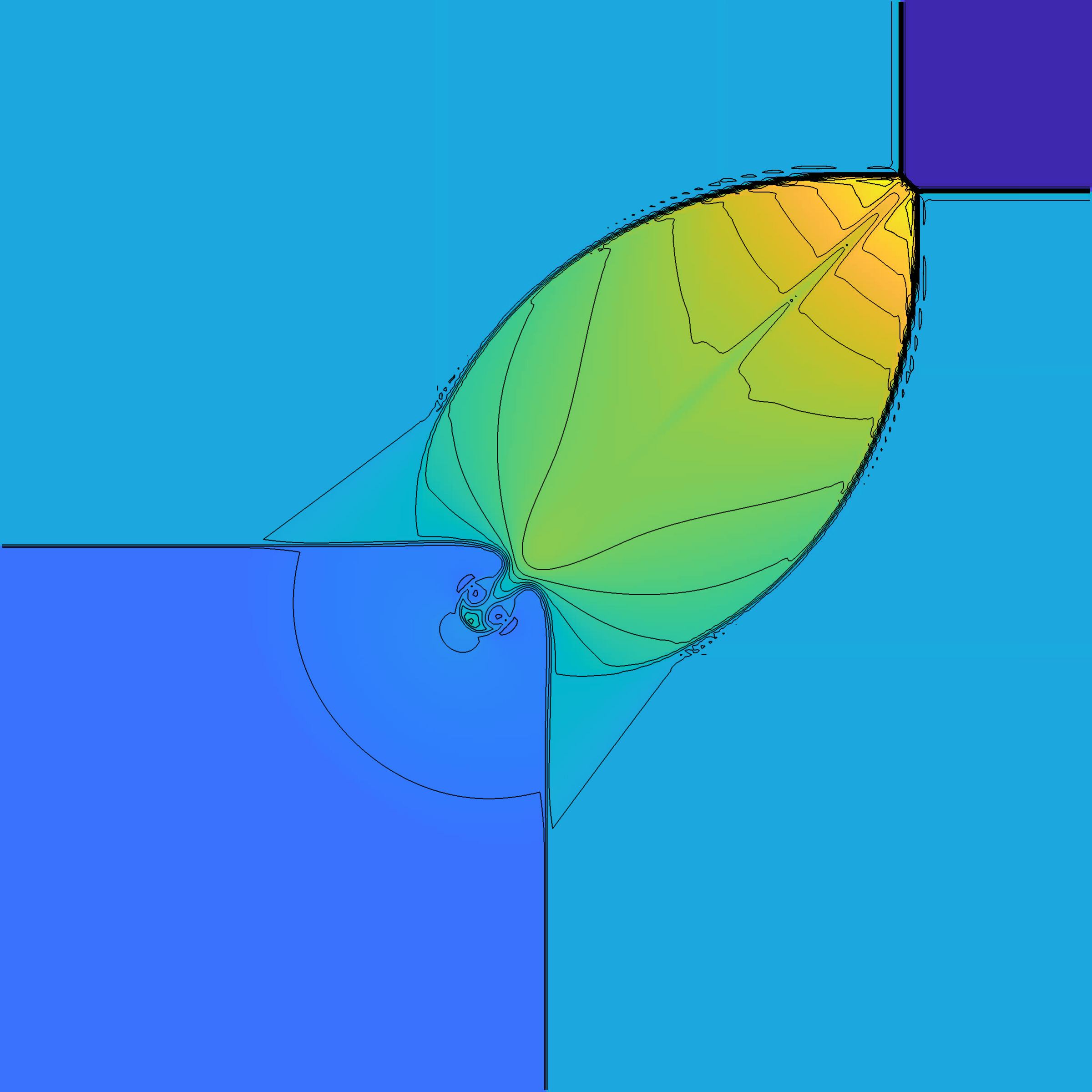}
  \includegraphics[width=0.32\linewidth]{./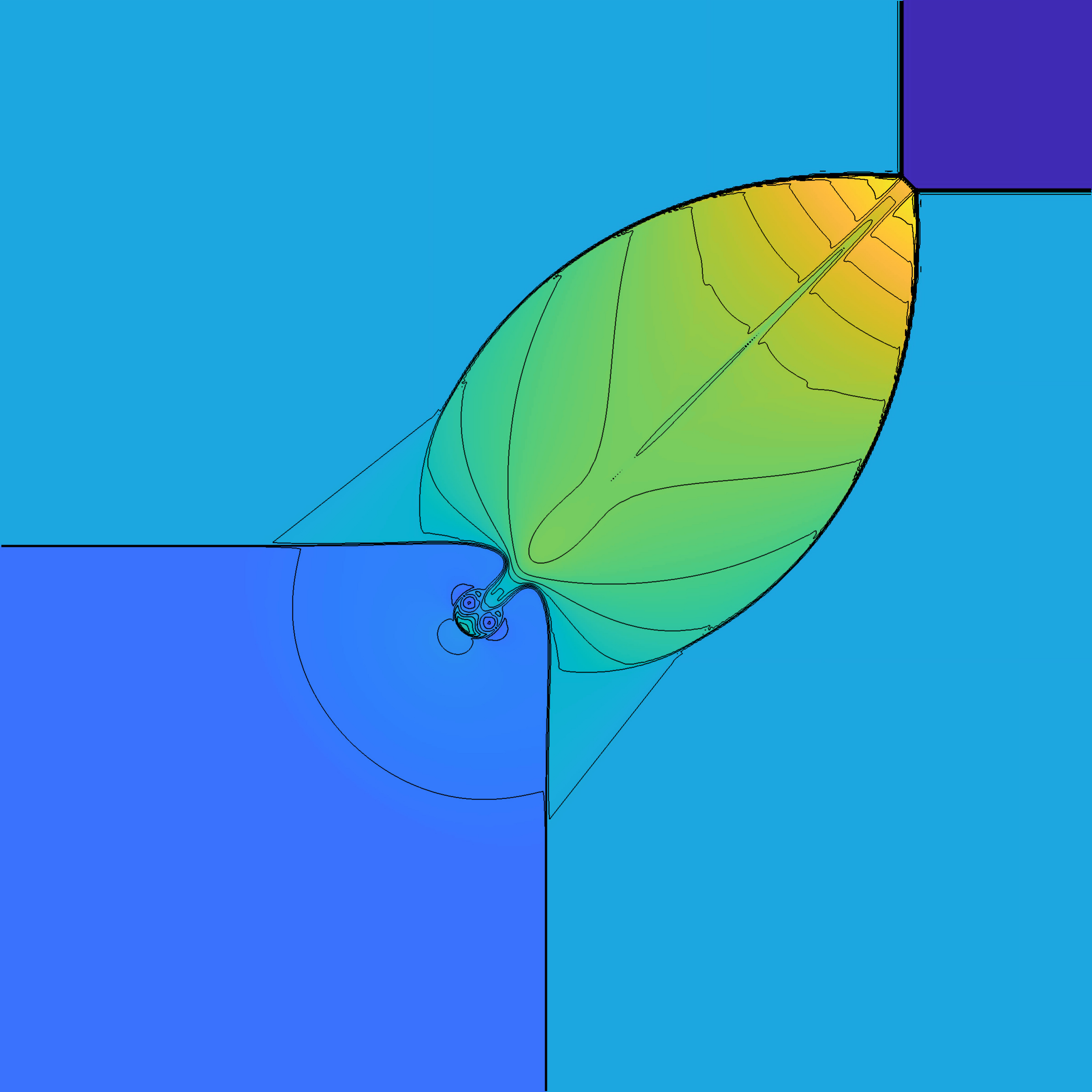}\\[0.15cm]
  \includegraphics[width=0.32\linewidth]{./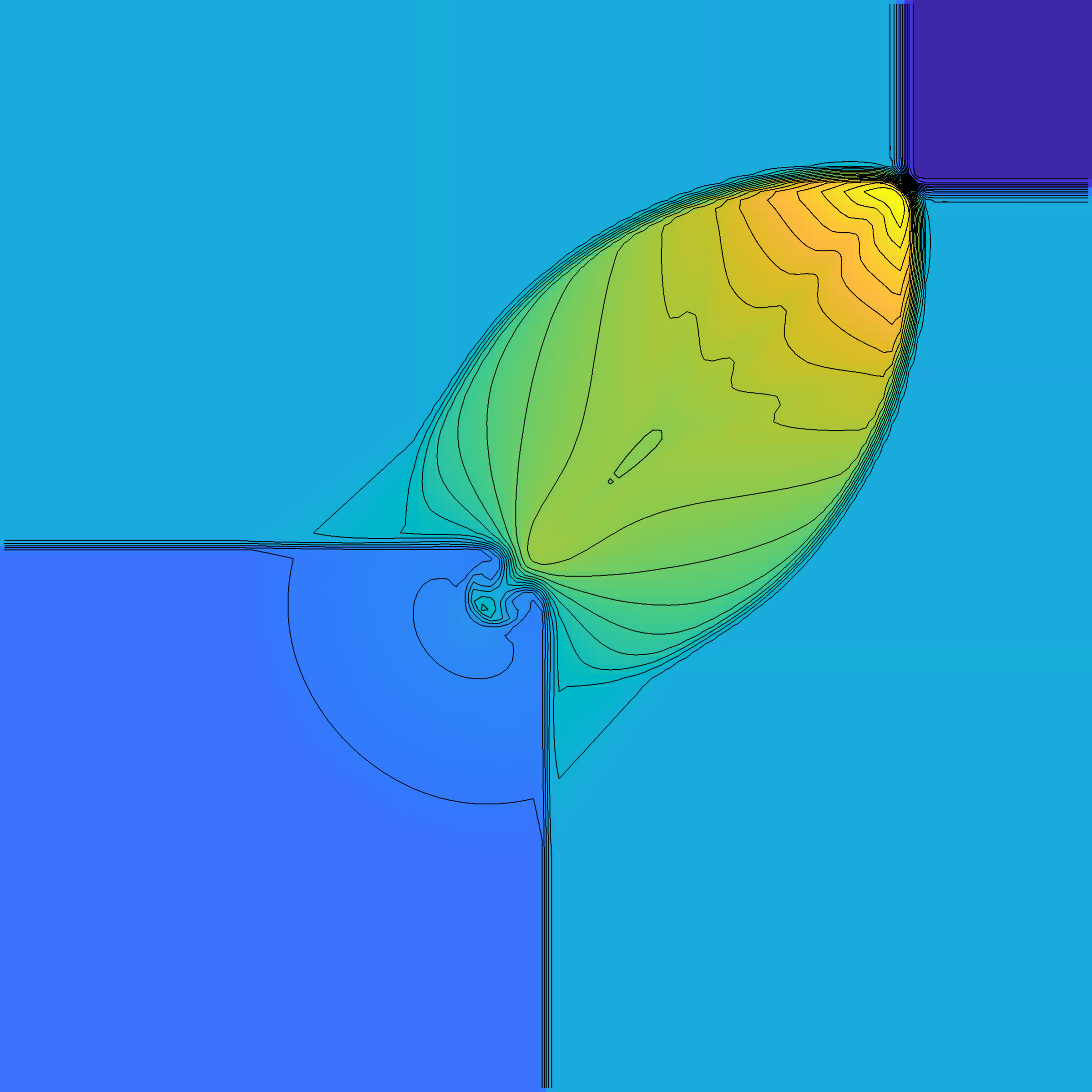}
  \includegraphics[width=0.32\linewidth]{./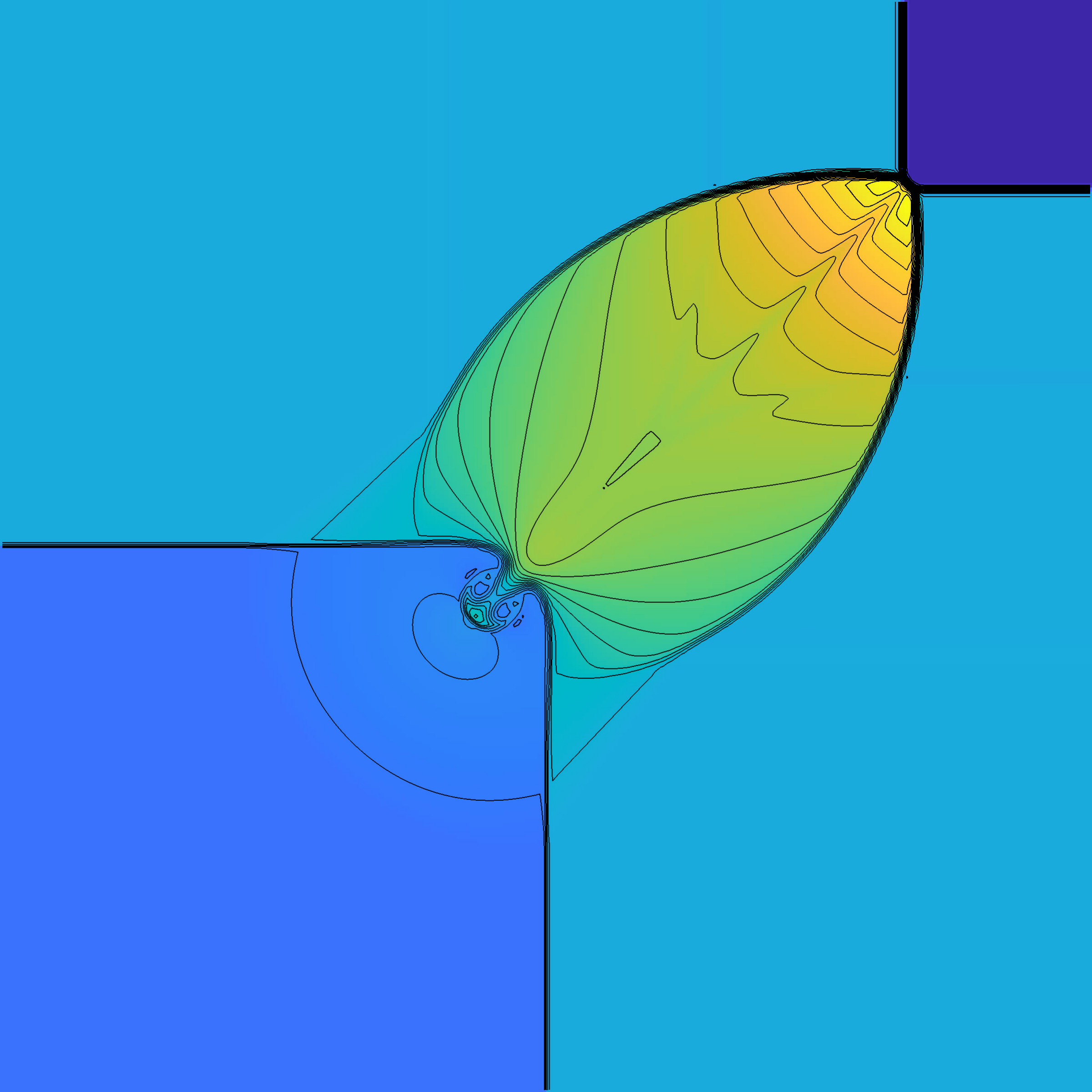}
    \includegraphics[width=0.32\linewidth]{./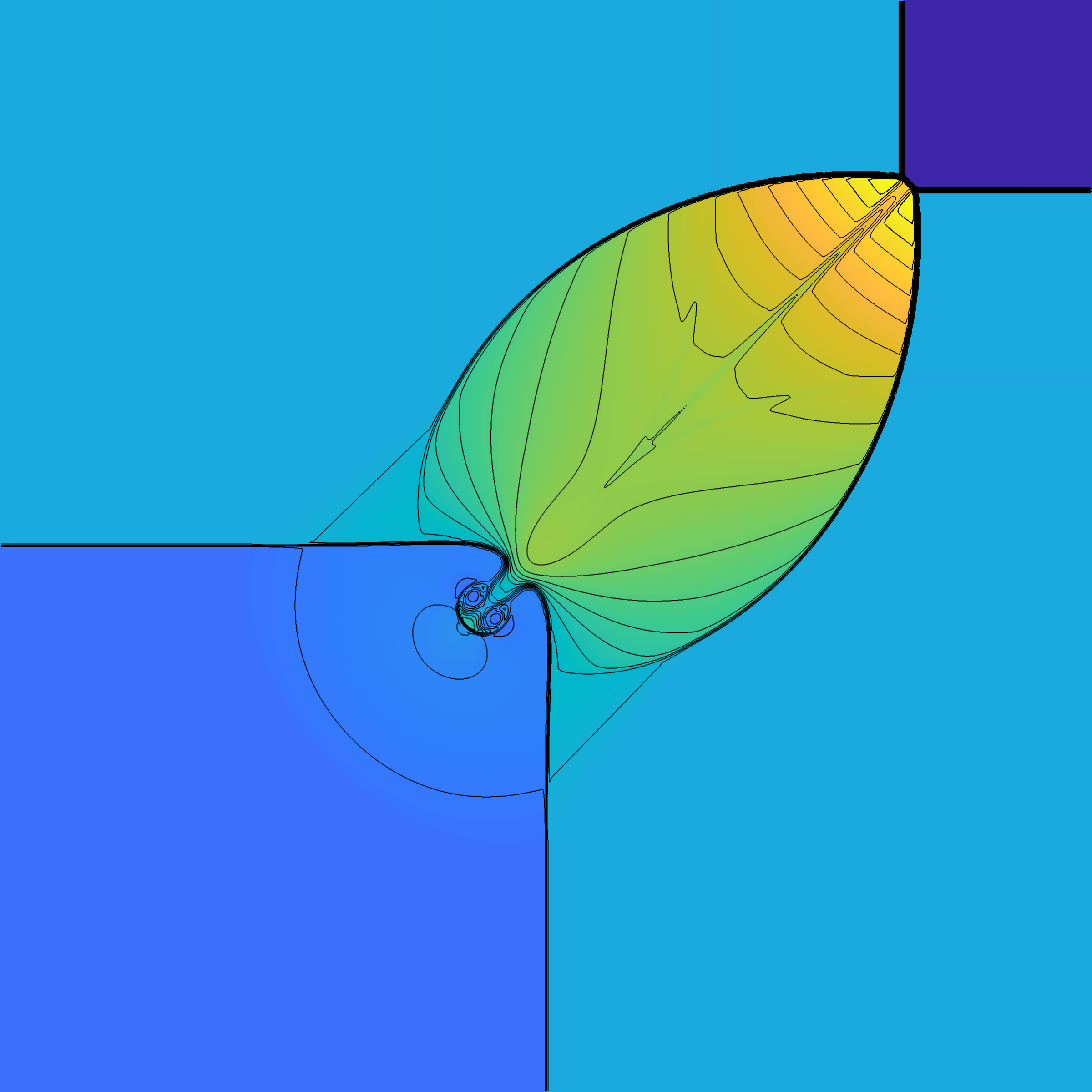}
  \caption{\label{fig:RP12}Density at time $t=0.21$
    for the two-dimensional Riemann problem described in
    \cite[Configuration F]{article:RCG1993}. From left to right we
    used grids with $128^2$, $256^2$ and $512^2$ grid cells. In the top row the third order accurate Active
    Flux method was used without any limiting. In the bottom row the
    point values were limited to avoid oscillations near shocks as
    described in \cref{sec:shockIndicator}.}
\end{figure}
The solution structure is well approximated even on the coarsest
grid. Note that the same problem was also studied in
\cite{article:ABK2025} using a semi-discrete form of the Active Flux
method. Our results on the $128^2$ grid seem to
lead to a more accurate approximation of the solution structure than the semi-discrete method on a grid with  $200^2$ cells. 

\begin{figure}
\includegraphics[width=0.32\linewidth]{./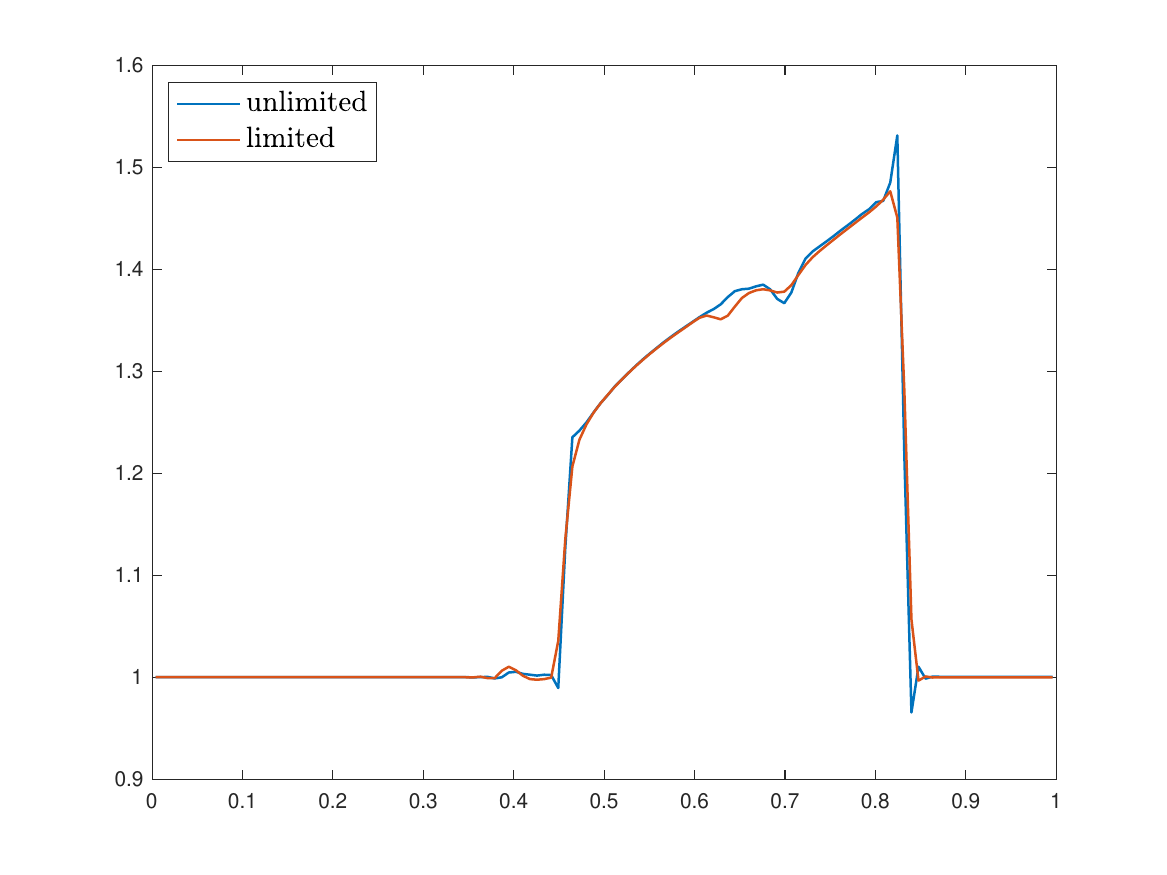} \hfill
\includegraphics[width=0.32\linewidth]{./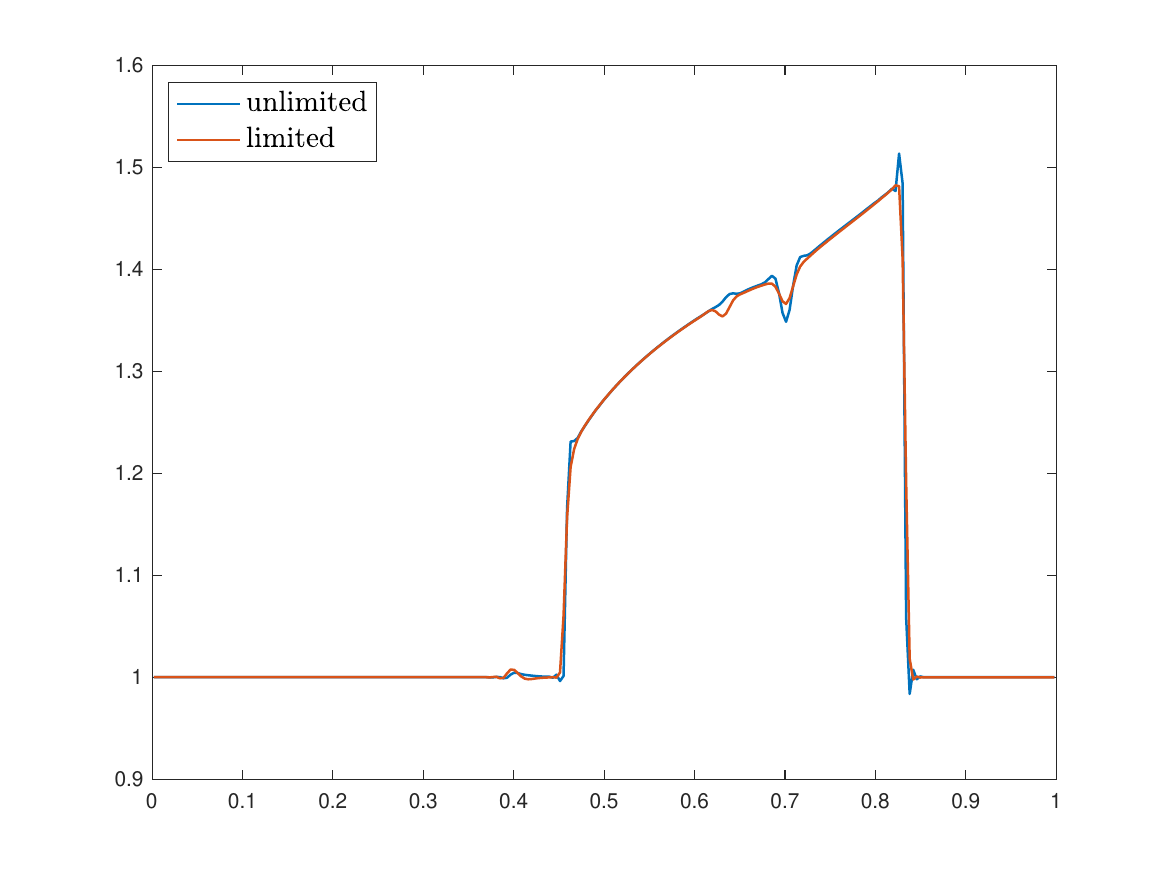} \hfill
\includegraphics[width=0.32\linewidth]{./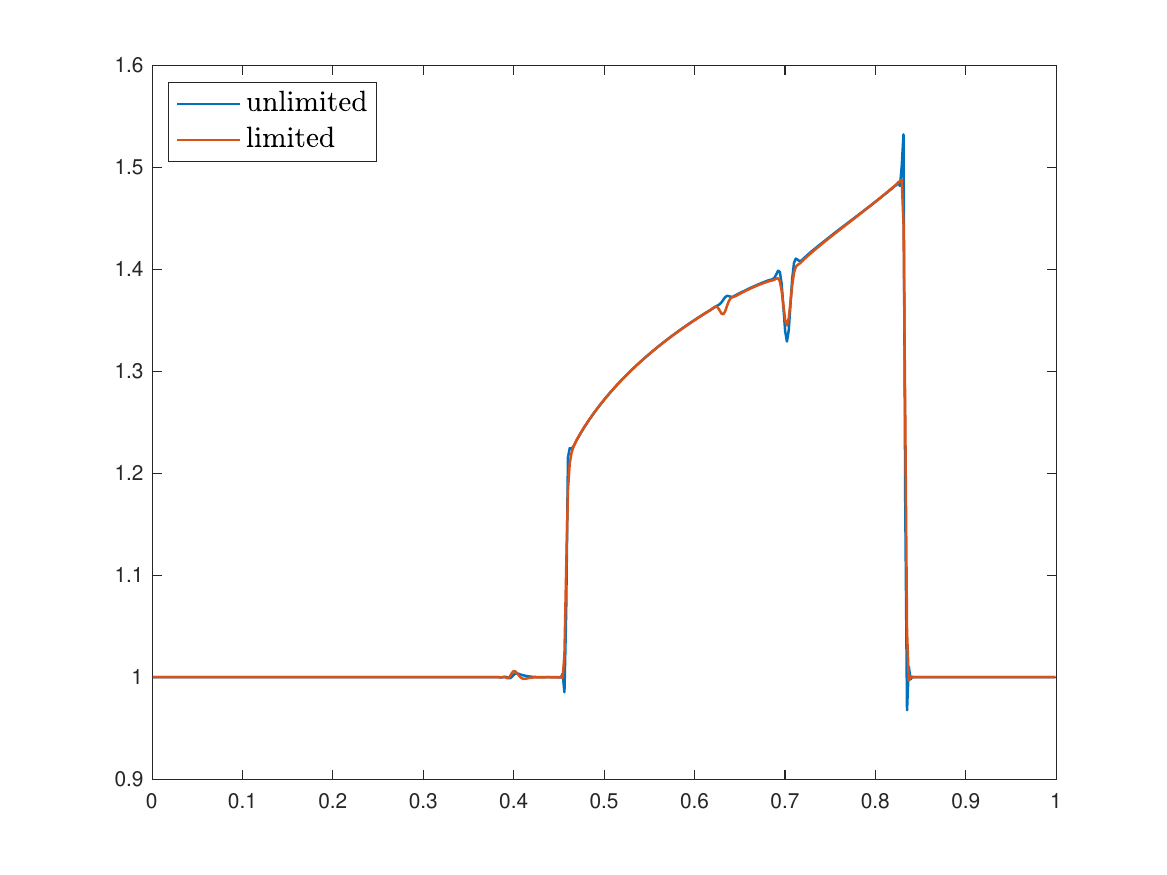}
\caption{  \label{fig:Config12_yfix}Comparison of the limited and unlimited solution 
structure. Density for Configuration F along a slice with $y=0.7012$ using  $128^2$ cells (left), $256^2$ cells  (middle) and  $512^2$ cells (right).}
\end{figure}

Slightly more challenging are the Configurations 4, E and J from
\cite{article:RCG1993}. For Configurations 4 and J some point values
could not be evolved using the third order accurate evolution operator
and thus the bound preserving limiter for point values, described in
\cref{sec:boundPreservingPointValues}, was needed.  In \cref{fig:RPs} we give some results.
\begin{figure}
\includegraphics[width=0.32\linewidth]{./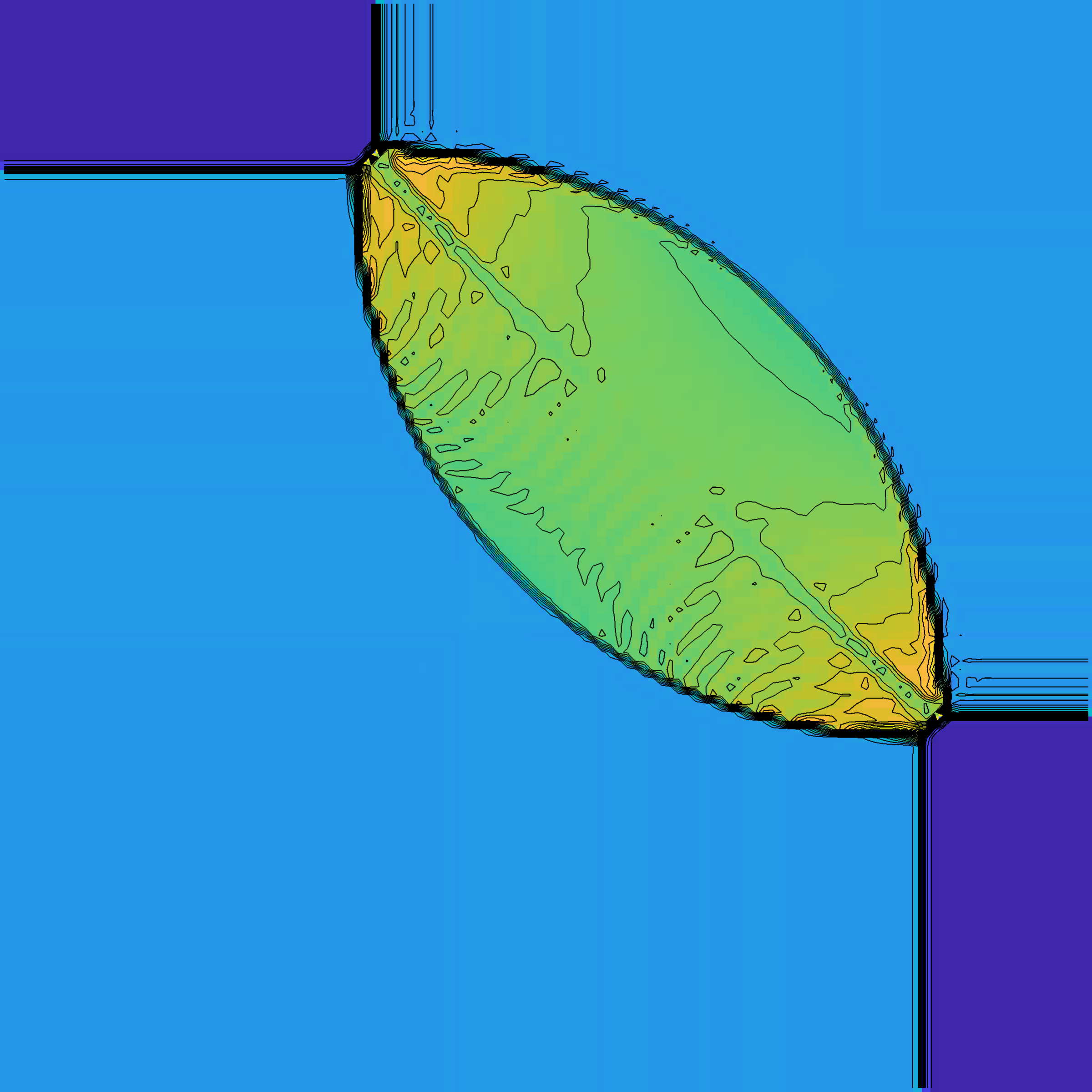}\hfill
\includegraphics[width=0.32\linewidth]{./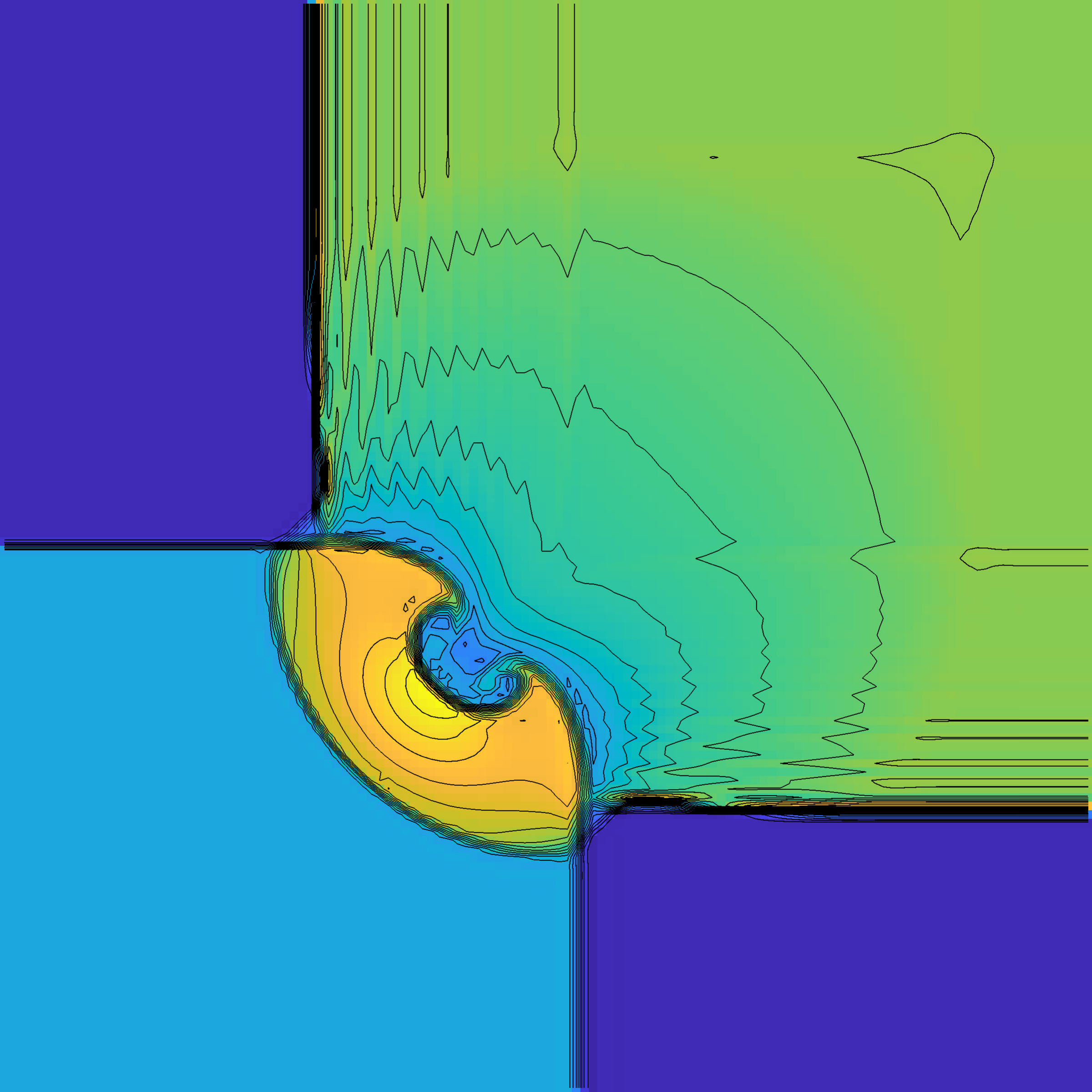}\hfill
\includegraphics[width=0.32\linewidth]{./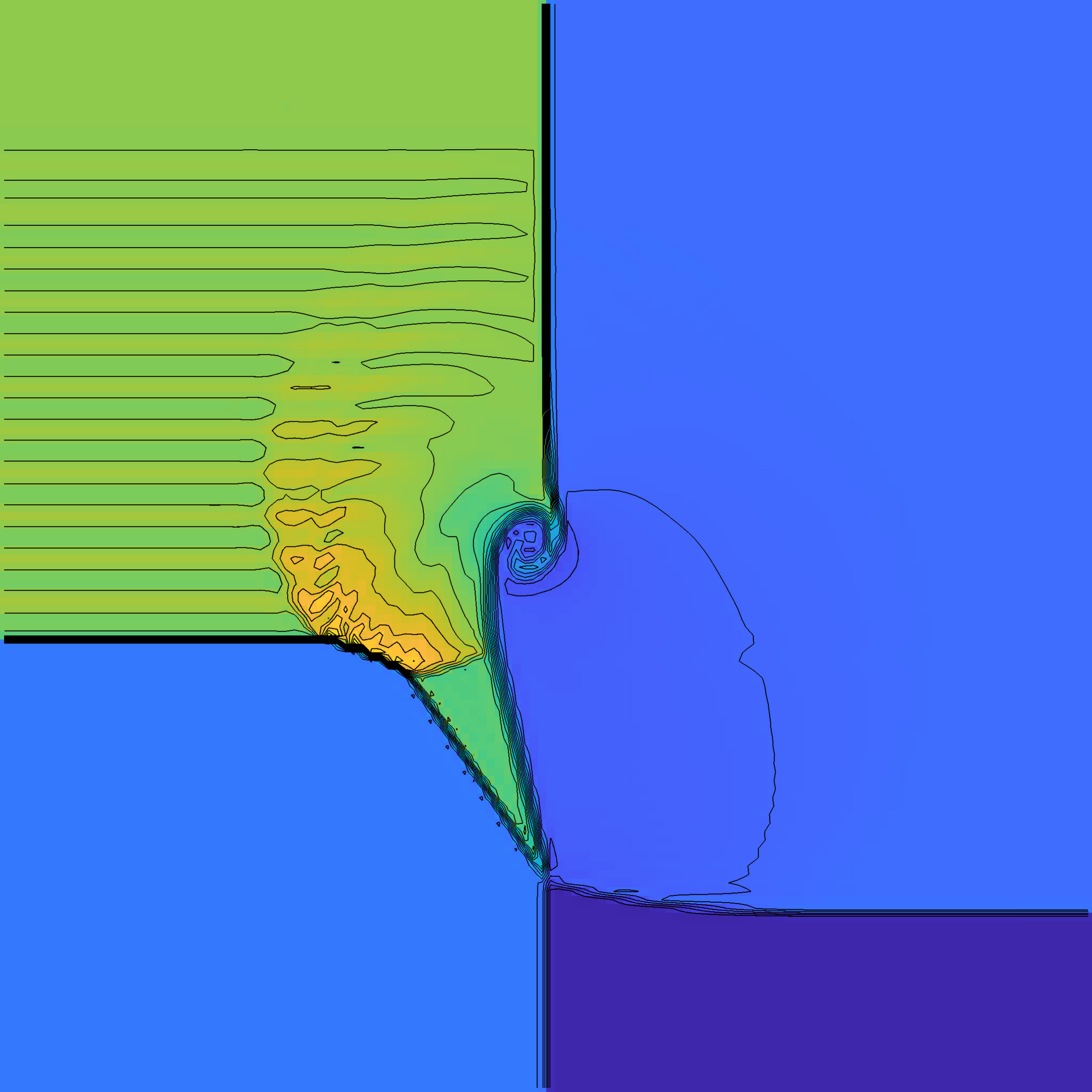}\\[0.15cm]
\includegraphics[width=0.32\linewidth]{./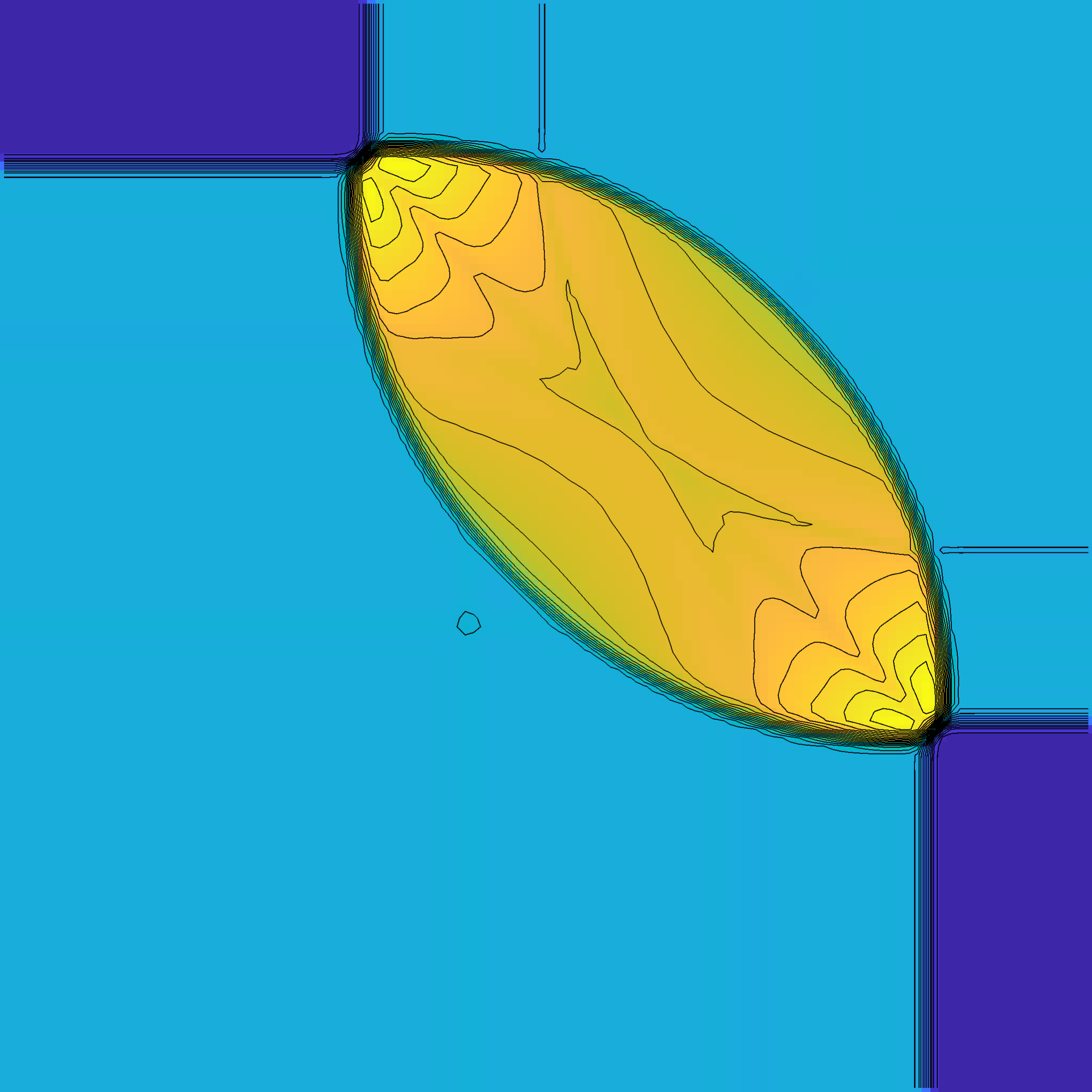}\hfill
\includegraphics[width=0.32\linewidth]{./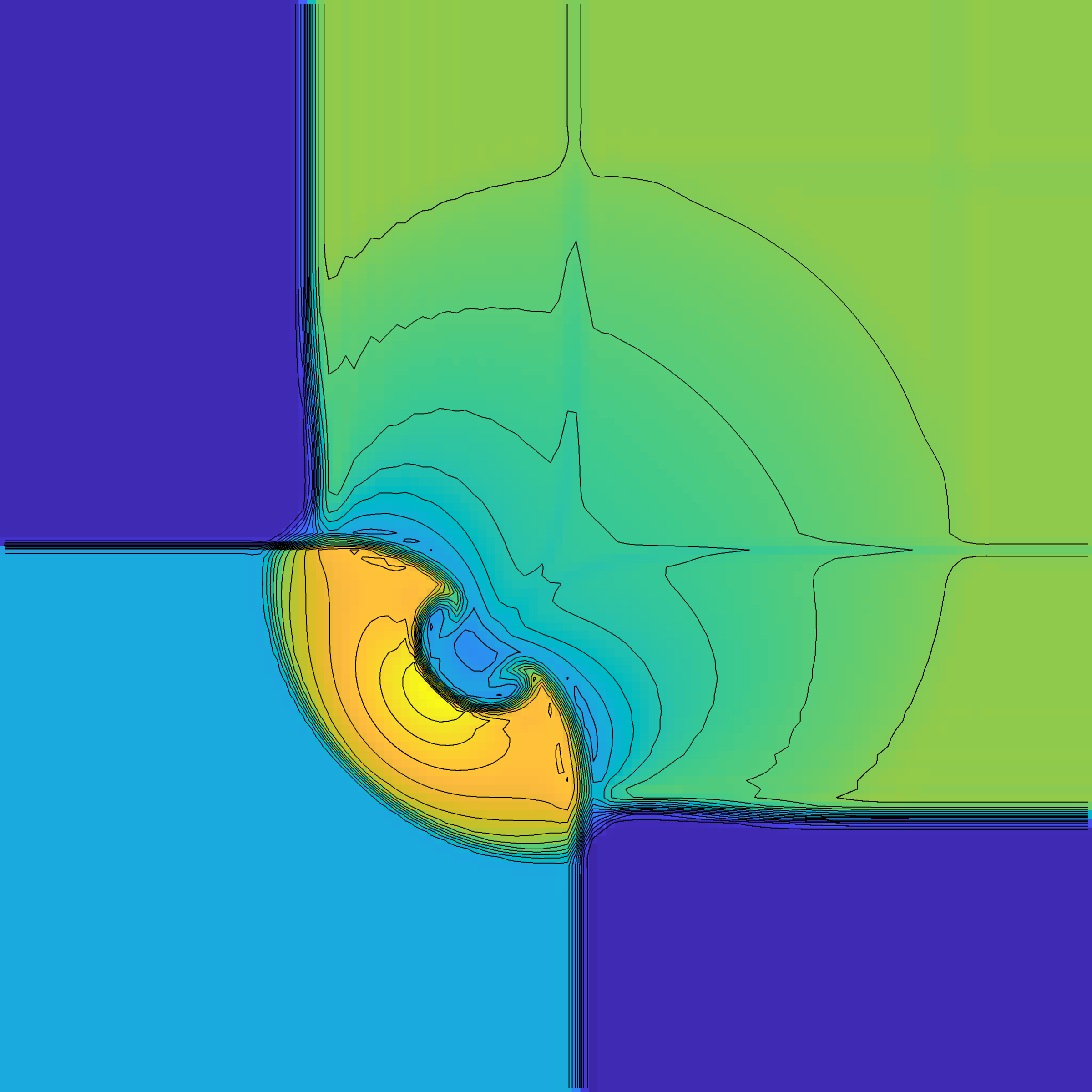}\hfill
\includegraphics[width=0.32\linewidth]{./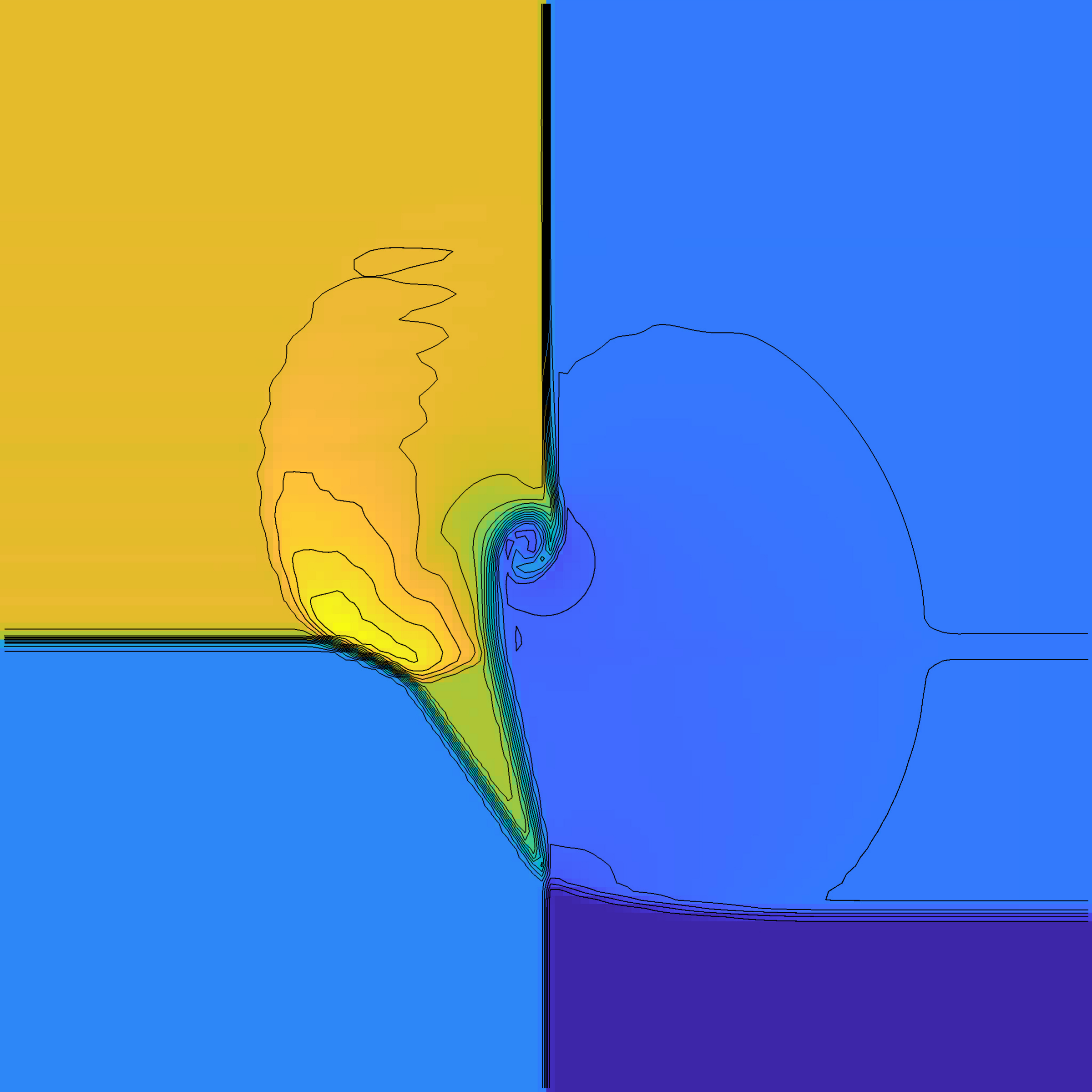}\\[0.15cm]
\includegraphics[width=0.32\linewidth]{./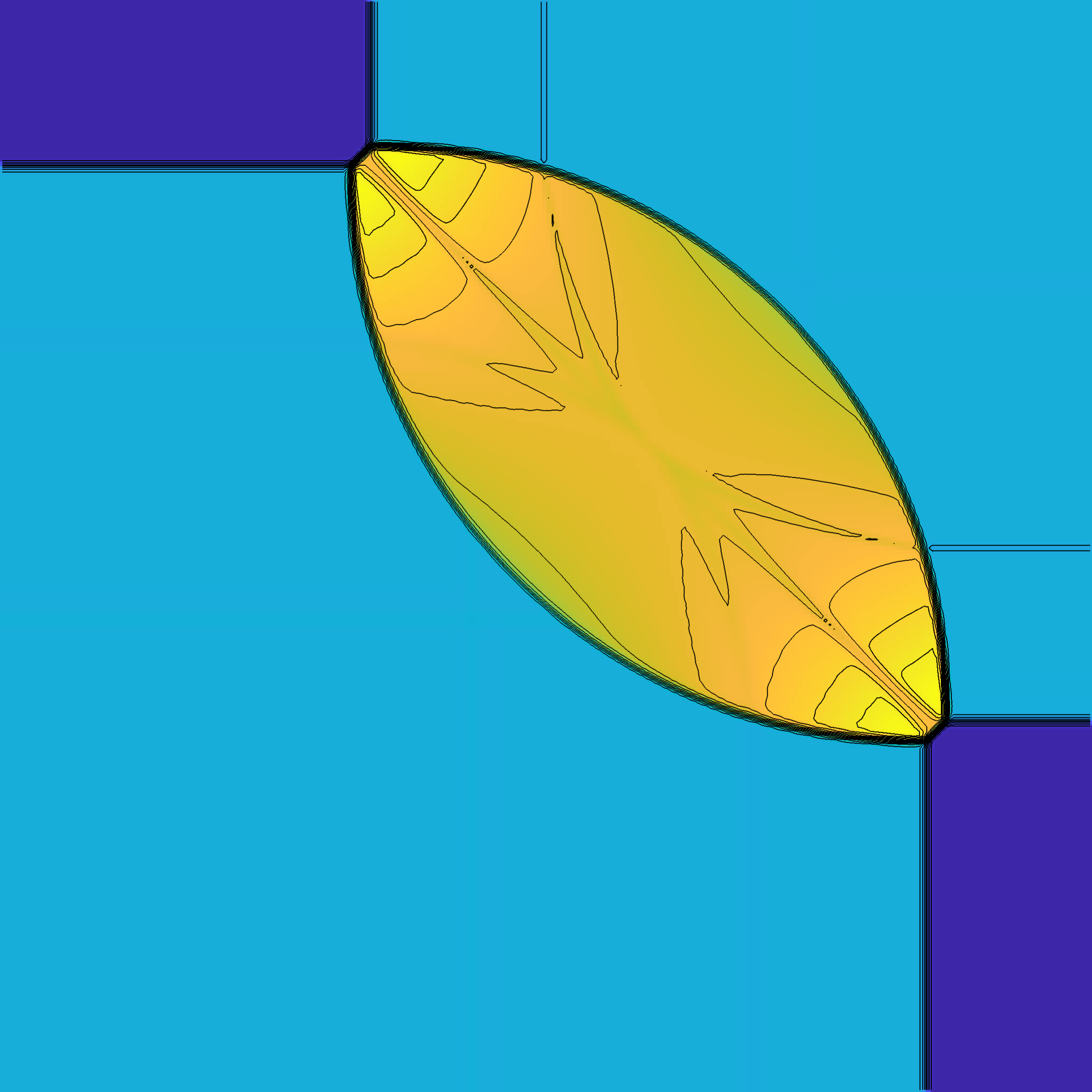}\hfill
\includegraphics[width=0.32\linewidth]{./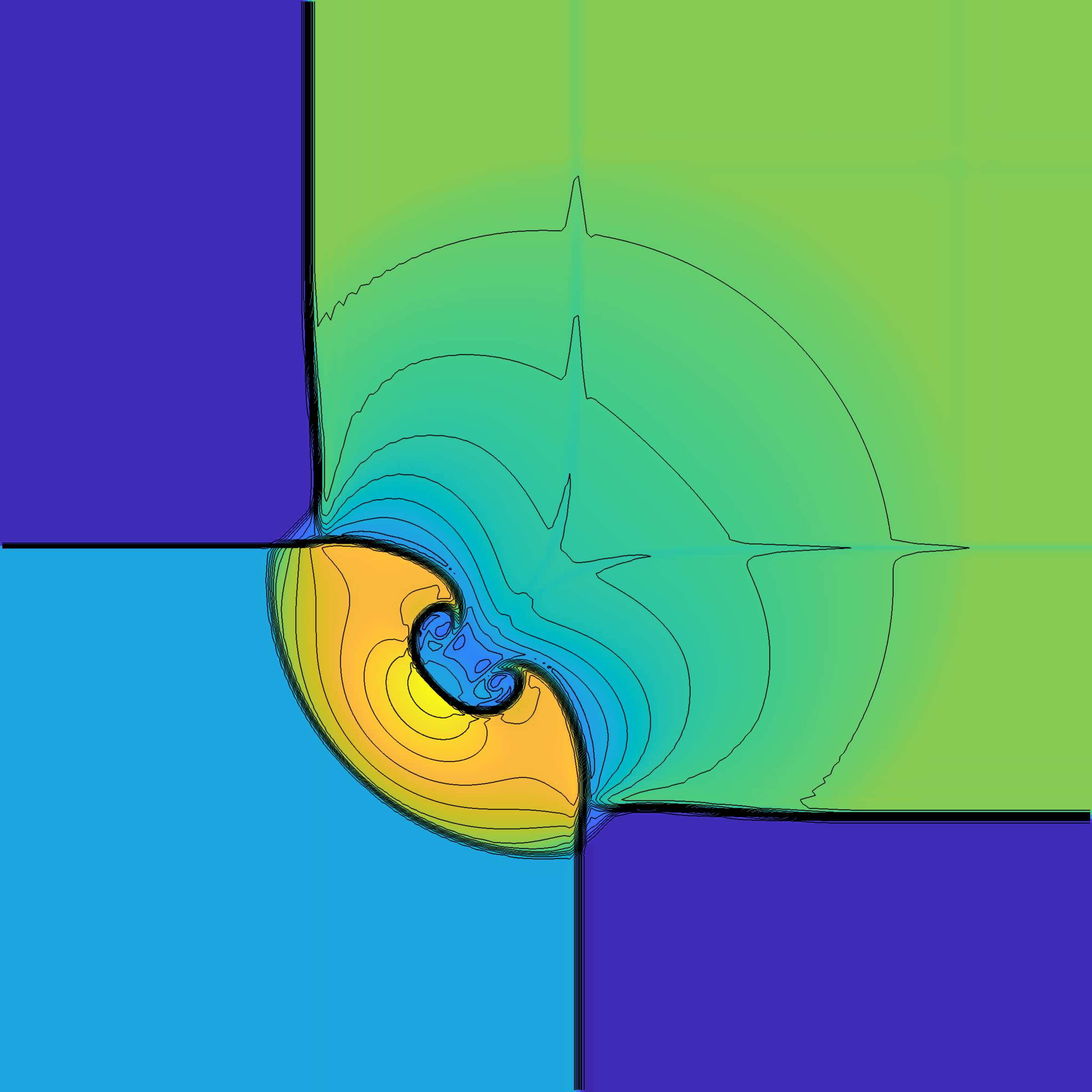}\hfill
\includegraphics[width=0.32\linewidth]{./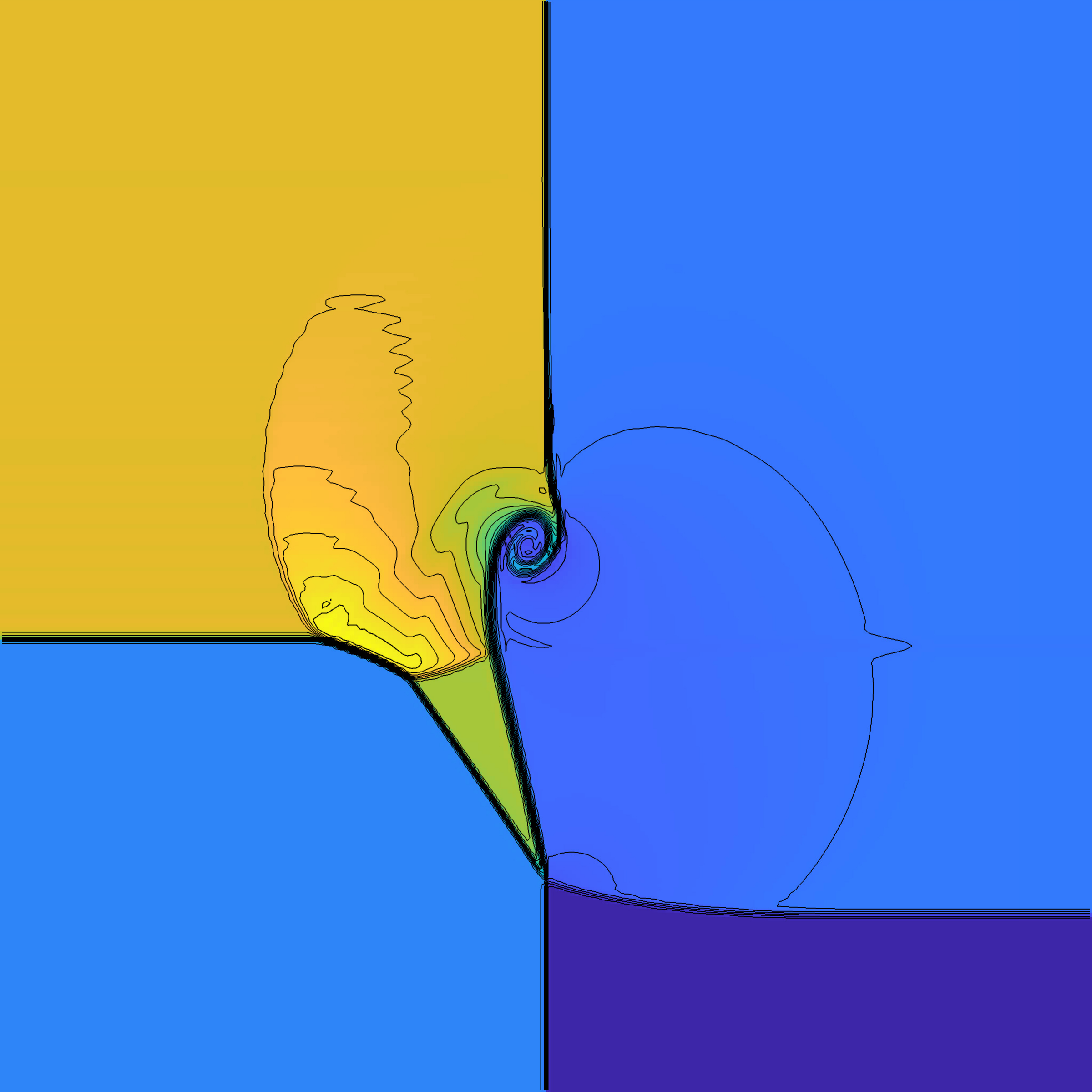}
\caption{Solution structure for Configuration 4 at time $t=0.21$
  (left), Configuration E at time $t=0.3$ (middle) and Configuration J
at time $t=0.3$ (right).  In the first two rows $128^2$  cells and  in the last row $256^2$ cells were used. In the first row only the bound preserving update was used. 
In the last two rows the shock indicated limiting of point values, as described in \cref{sec:shockIndicator}, was used in addition. \label{fig:RPs}}
\end{figure}
In the first row we show the solution on a grid with $128^2$
grid cells using only the bound preserving point value limiter where
needed.
Oscillations, caused by the shock waves, are clearly seen and disturb
the overall solution structure. In the second row we show results on
the same coarse grid but using in addition the limiting of point
values as described in \cref{sec:shockIndicator}. In the bottom row we
show results on a grid with $256^2$ grid cells using the same parameter free
limiting of point values. Note that Configuration J contains a
transonic shock wave between the left top and the left bottom
quadrants. The
modification described in \cref{alg:AFEuTransonicShocks} was needed to
approximate this problem.
Compared with other published results, see for
example \cite{article:KT2002,article:RCG1993},
we obtain accurate results on coarser grids.

Finally, we consider Configuration 3 from \cite{article:RCG1993},
which shows the most interesting solution structure. Several authors,
see for example
\cite{article:WWD2019}, used this problem to study the performance of high order
methods. The vortices along the contact discontinuities, which
develop due to a Kelvin-Helmholtz instability, are seen as indicator for the
quality of the numerical method.
In  \cref{fig:C3} (top), we show the density at time $t=0.8$
computed using  the Active Flux method
on grids with $256^2$ (left), $512^2$ (middle) and
$1024^2$ (right) cells.
This test computation used the limiters described in 
\cref{sec:boundPreservingPointValues,sec:shockIndicator}. 
Furthermore, the correct treatment of transonic shock waves, as
described in \cref{alg:AFEuTransonicShocks}, was needed, since all
four of the initial configuration's Riemann problems lead to transonic shock waves.
\begin{figure}	
\includegraphics[width=0.32\linewidth]{./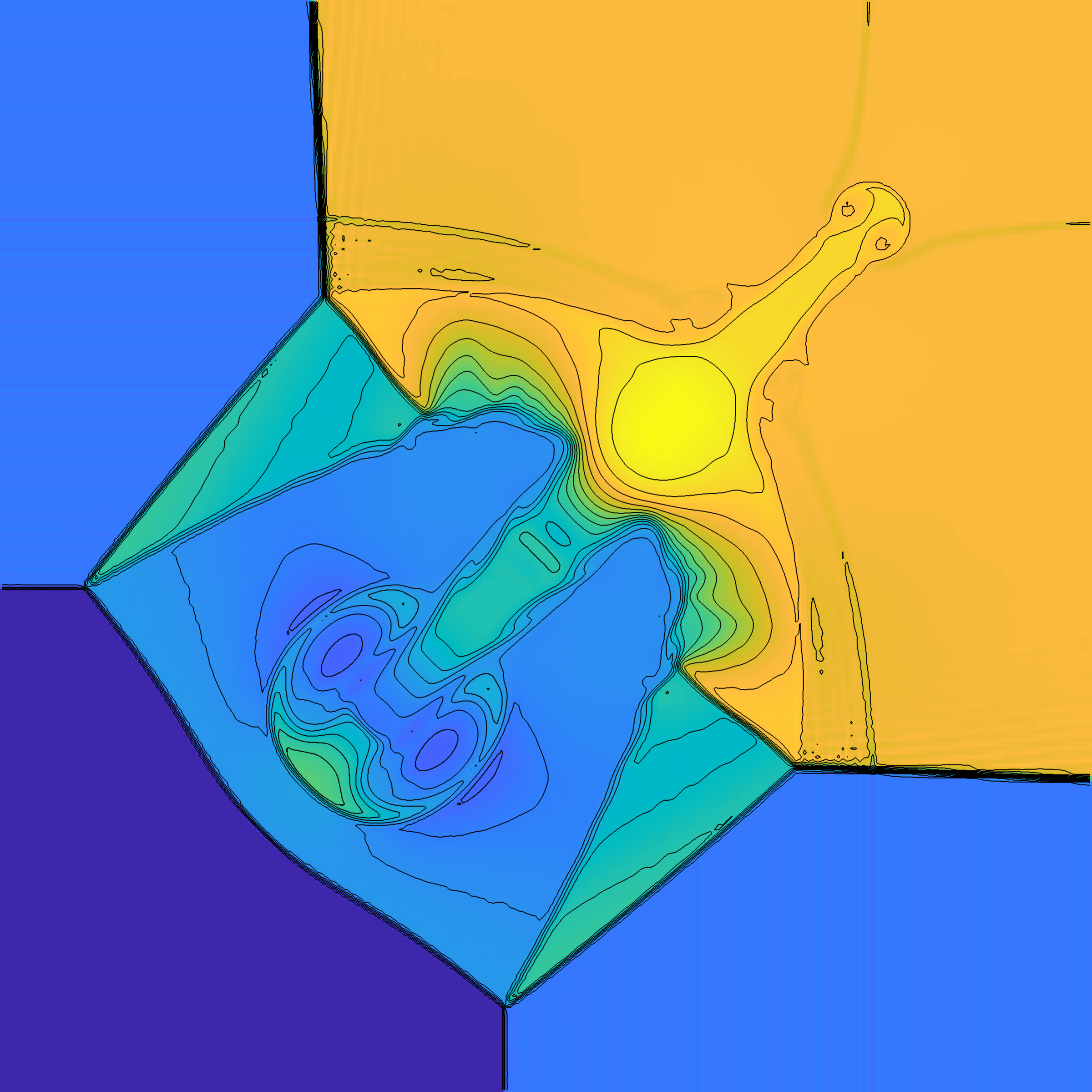}\hfill
\includegraphics[width=0.32\linewidth]{./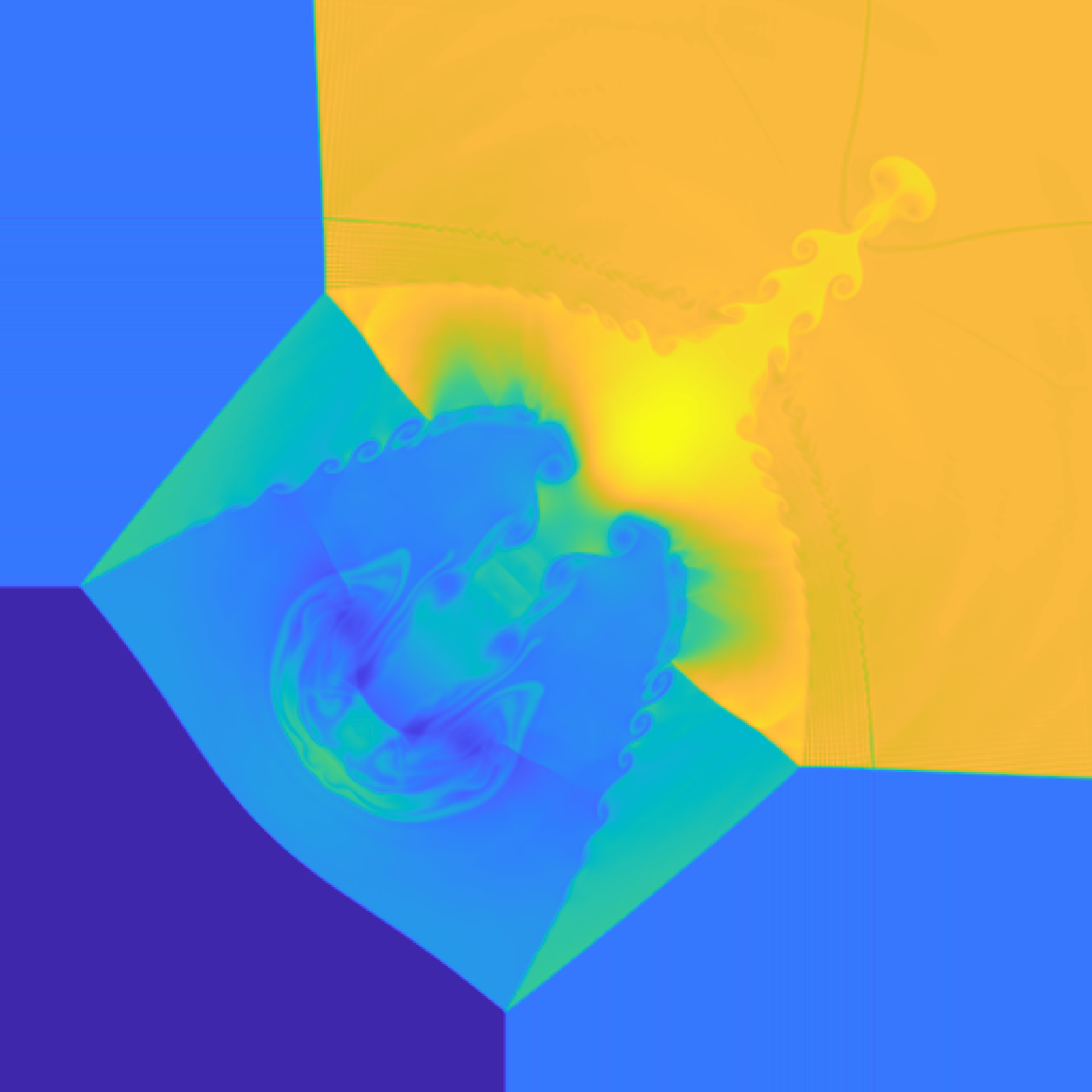}\hfill
\includegraphics[width=0.32\linewidth]{./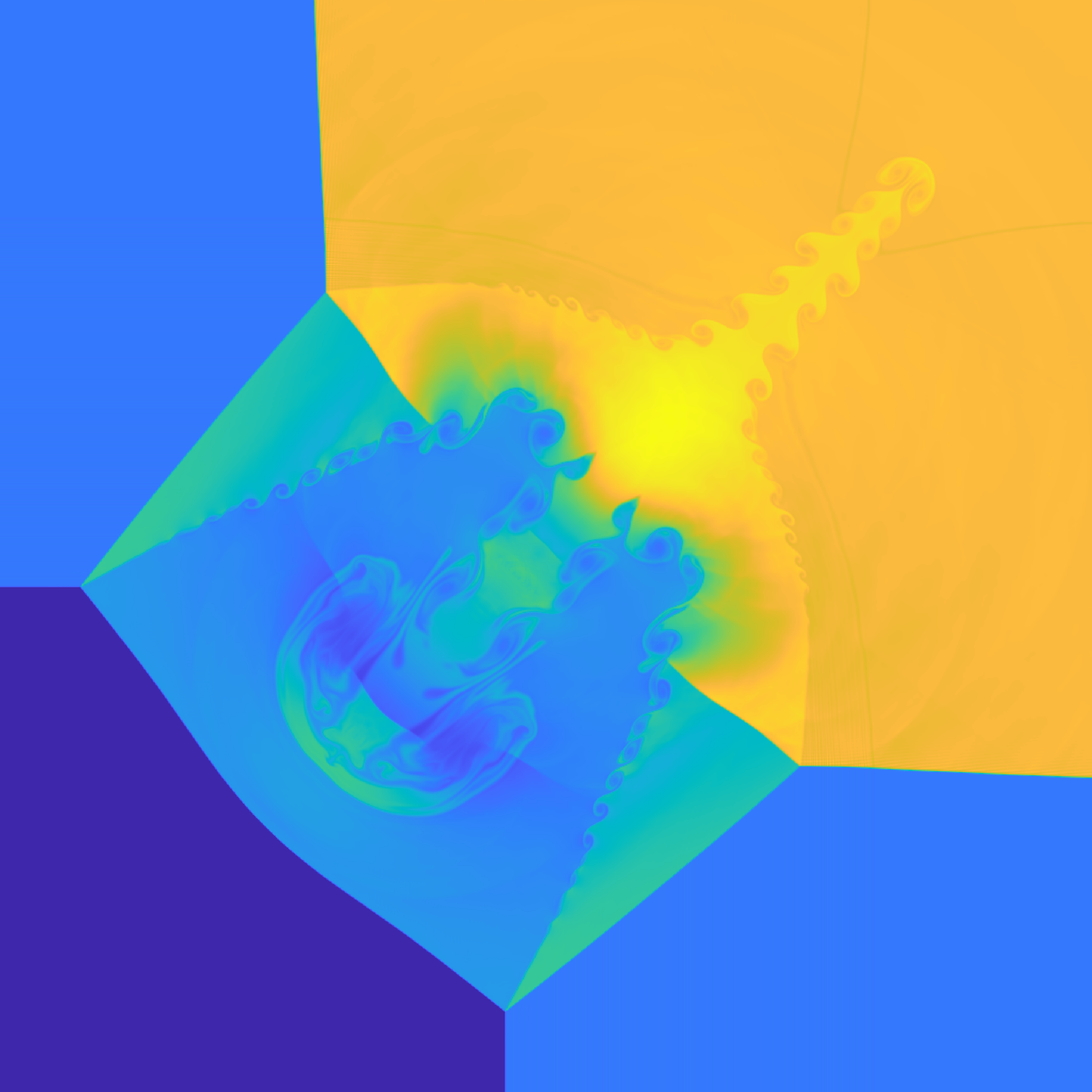} \\[0.15cm]
\includegraphics[width=0.32\linewidth]{./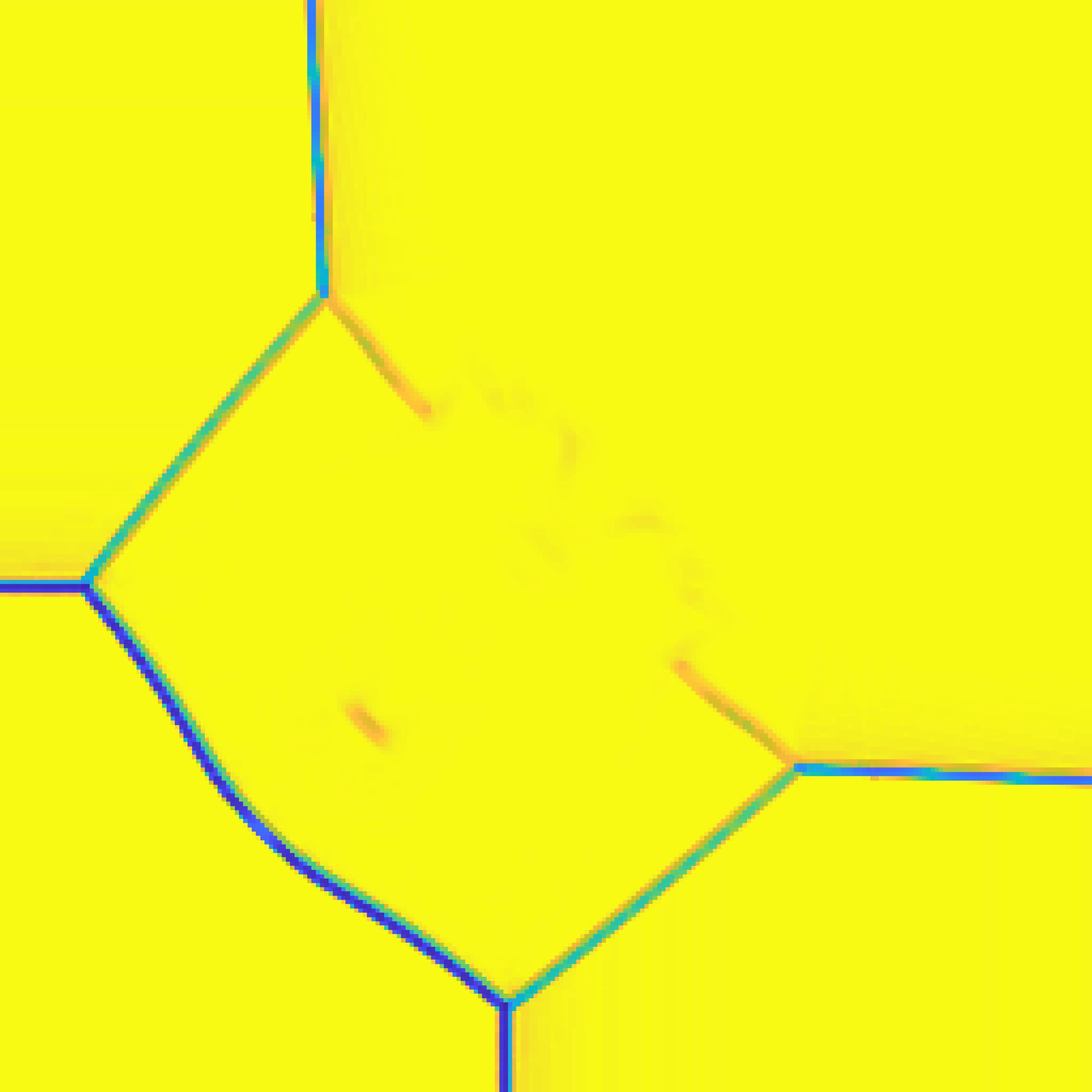}\hfill
\includegraphics[width=0.32\linewidth]{./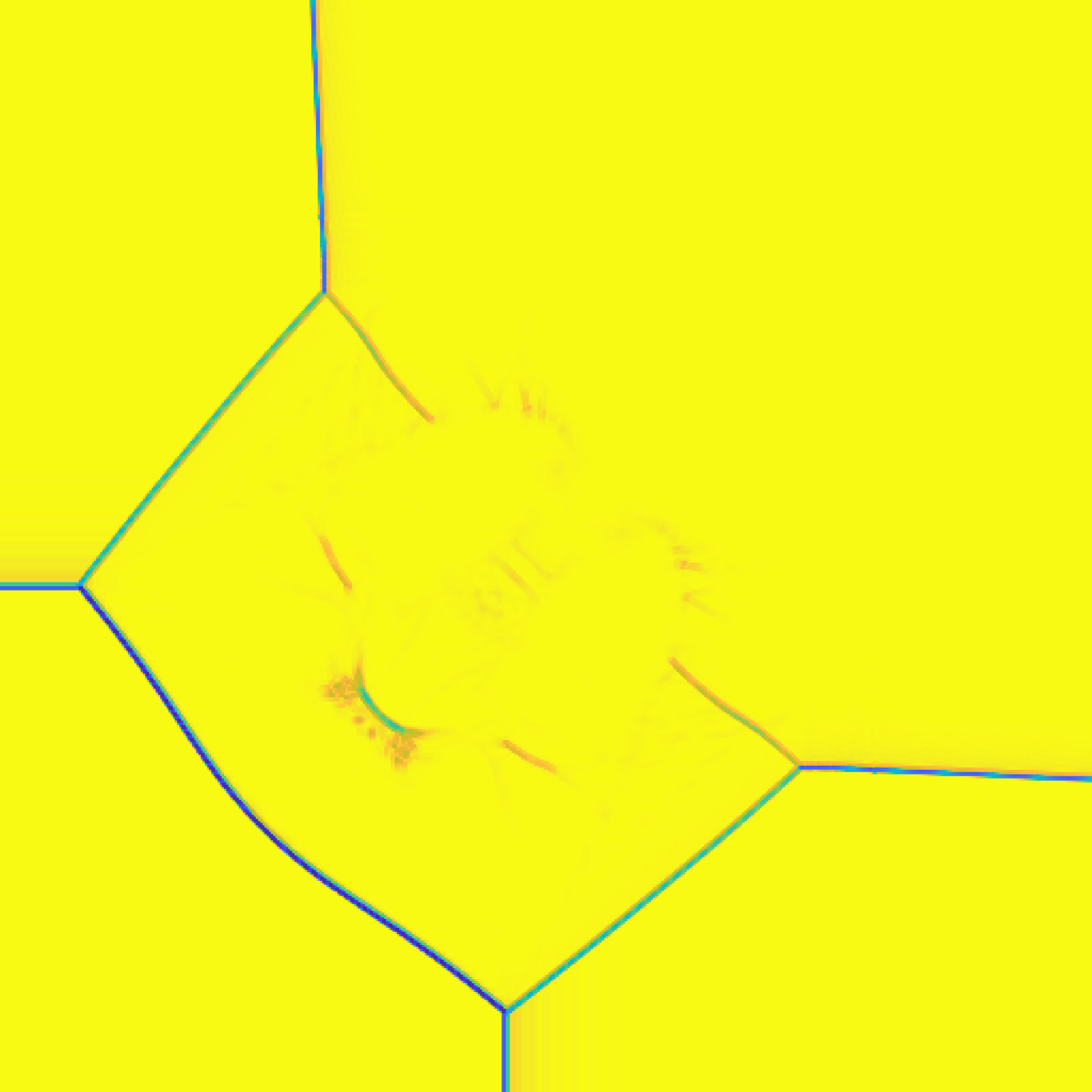}\hfill
\includegraphics[width=0.32\linewidth]{./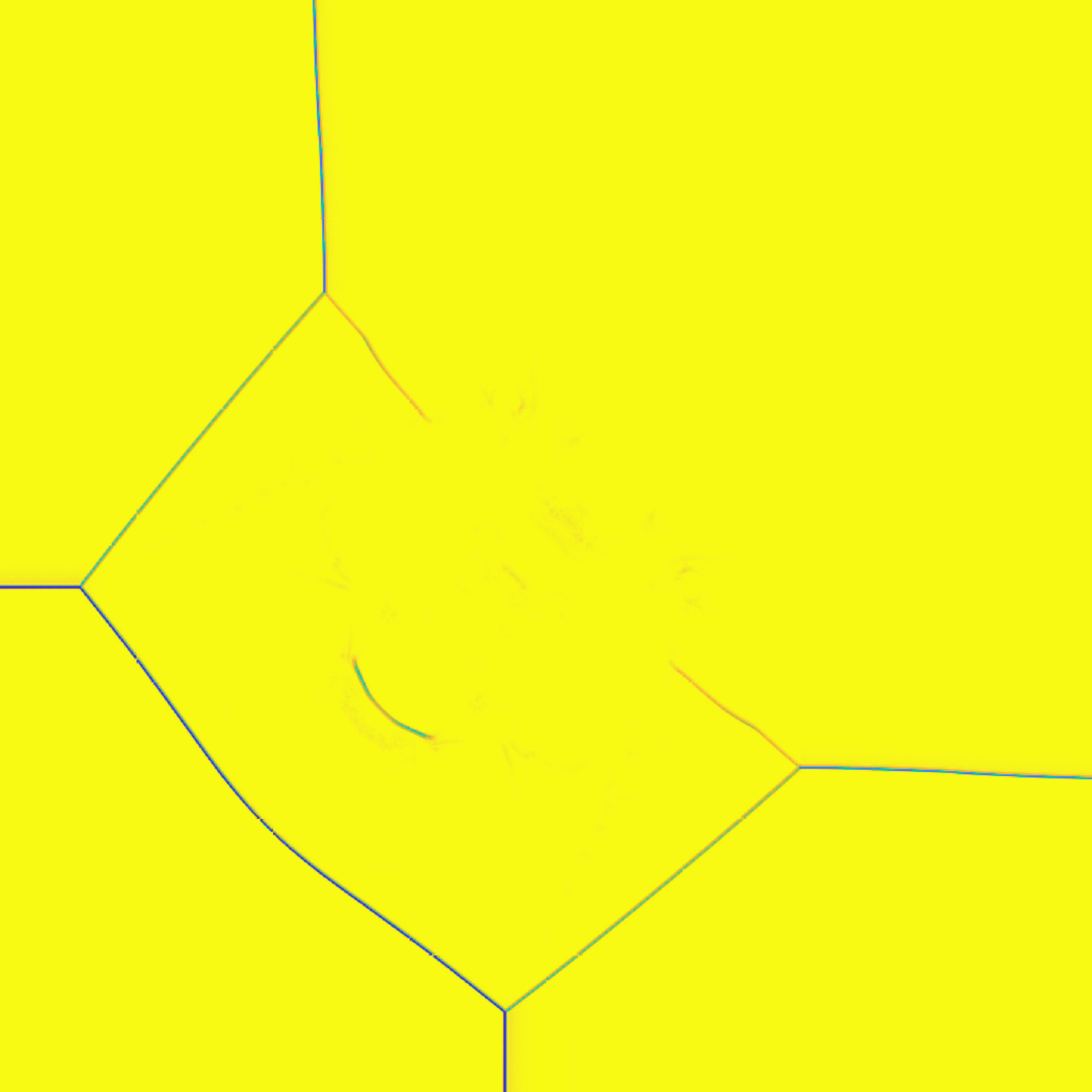}
\caption{\label{fig:C3} Numerical results for
    Configuration 3 on grids with $256 \times 256 $ (left), $512 \times
    512$ (middle) and $1024 \times 1024$ (right) grid cells. Plot of density at time $t=0.8$ (top), 
    corresponding shock indicator $\theta$ (bottom).}
\end{figure}
Even on the $256^2$ grid the vortices are visible and
we recover more structure than, for example,  the fifth order WENO method on a grid
with $400^2$ cells used in
\cite{article:WWD2019}. The second row shows the
  values of the shock indicator $\theta$ at the grid cell vertices 
  computed at the final time $t=0.8$.

\subsection{Kelvin-Helmholtz instability}
Now we consider approximations of the Kelvin-Helmholtz instability
test problem from \cite{article:SanKara}.
\begin{example}\label{ex:KH}
We consider the two-dimensional Euler equations with initial values of
the form
\begin{equation*}
\left(\rho_0,u_0,v_0,p_0\right)  = 
  \left\{ \begin{array}{lcl}
            (1,0.5,10^{-2} \sin(2 \pi x),2.5) & : & y>0.25,\\
            (2,-0.5,10^{-2}\sin(2 \pi x),2.5) & : & -0.25 \le y \le
                                                    0.25,\\
            (1,0.5,10^{-2}\sin(2\pi x),2.5) & : & y<-0.25\end{array}\right.
\end{equation*}
on the domain $[-0.5,0.5]\times [-0.5,0.5]$ with double periodic
boundary conditions.       
\end{example}
The one-dimensional, single mode perturbation imposed in the
$y$-directions evolves into a two-dimensional turbulent
structure, evoking  small-scale vortical structures.
These structures can be observed in numerical simulations
of high-order numerical methods and by refining the grid.
San and Kara \cite{article:SanKara} provide
comparisons of several WENO methods of order 3, 5 and 7 using different
numerical fluxes and different grid resolutions. In \cref{fig:KH} we
show numerical results obtained with our Active Flux method
without and with limiting of point values. As expected, the two
versions lead to different approximations. For both methods, the
Kelvin-Helmholtz instability is clearly seen even on the very coarse
grid.
\begin{figure}	
	\includegraphics[width=0.32\linewidth]{./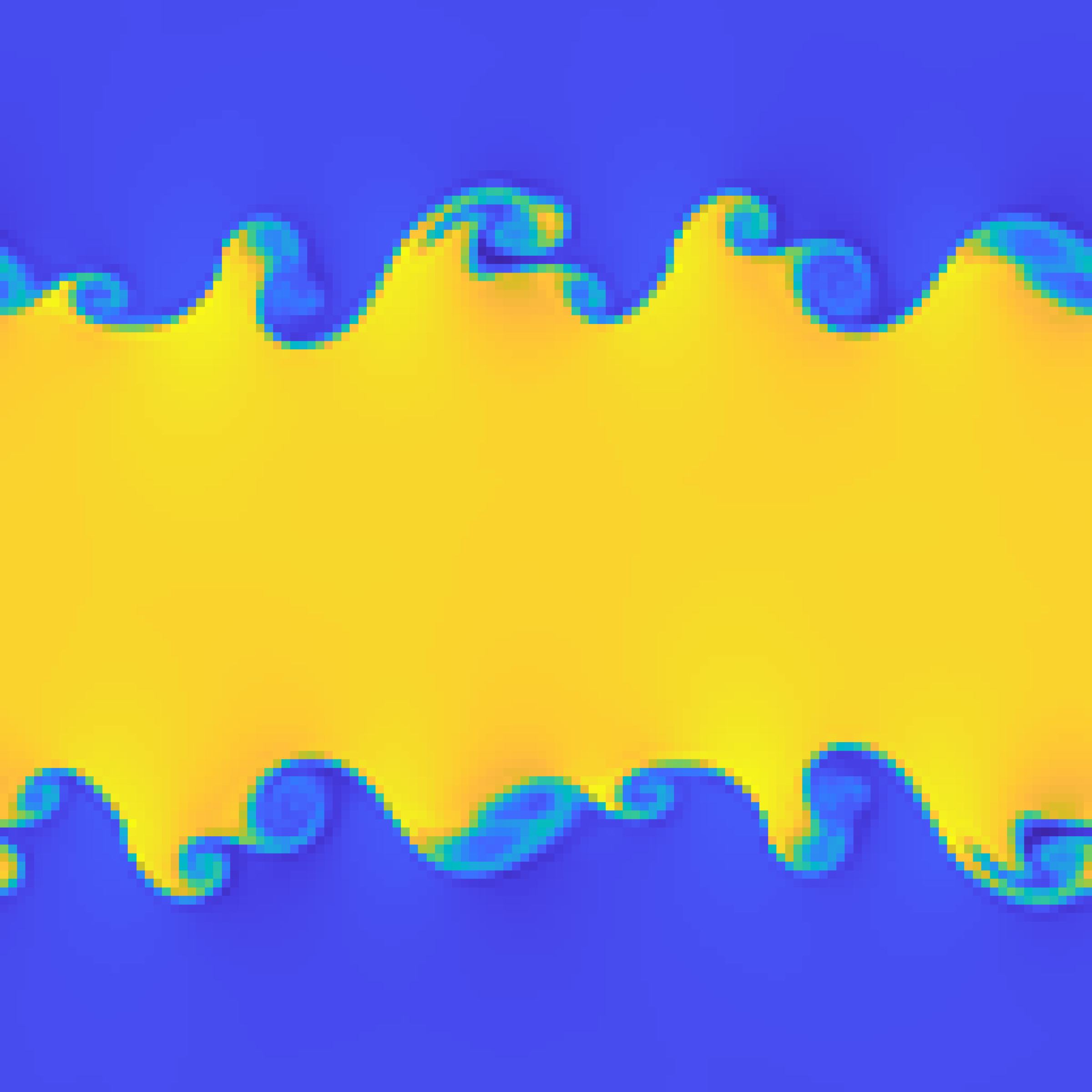}\hfill
	\includegraphics[width=0.32\linewidth]{./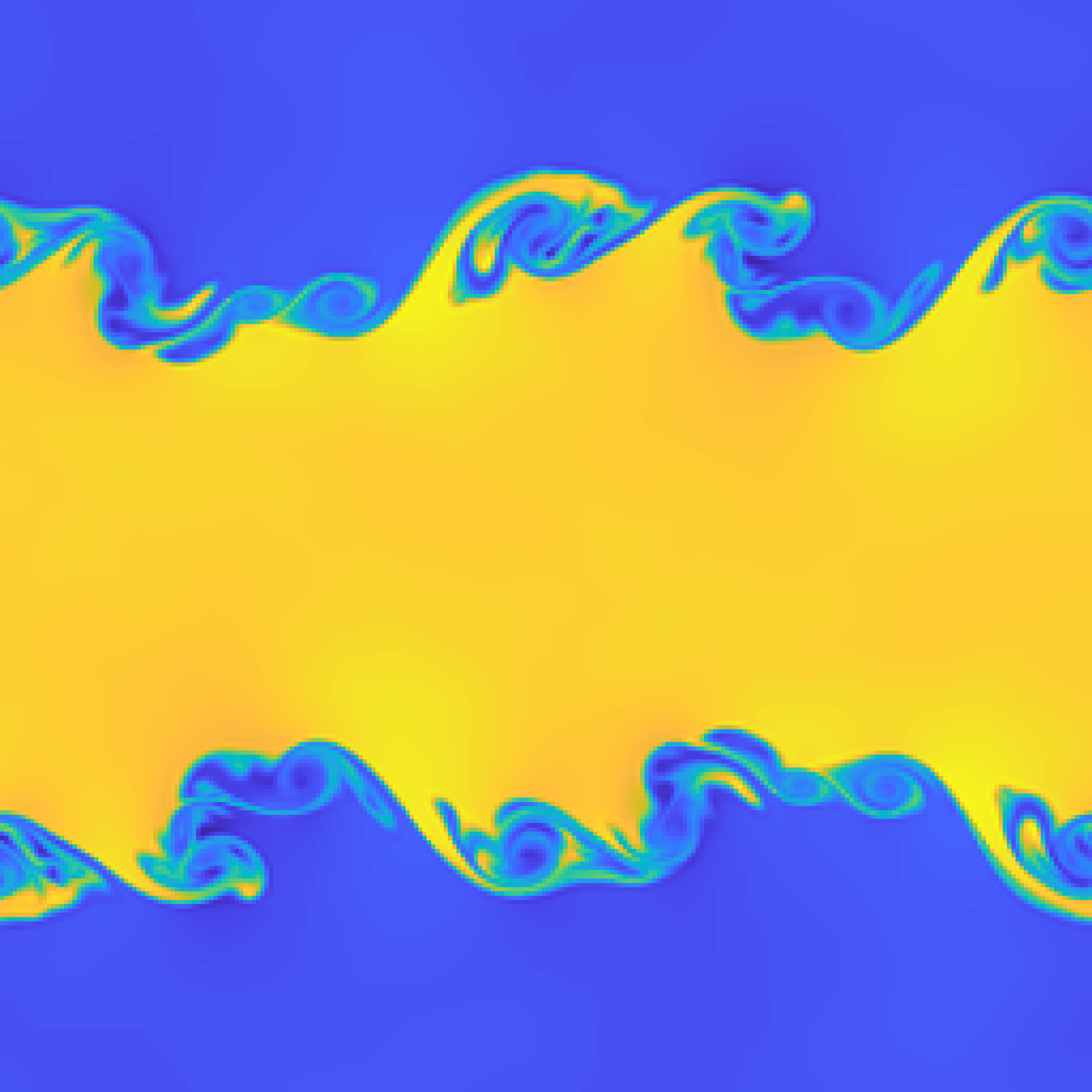}\hfill
	\includegraphics[width=0.32\linewidth]{./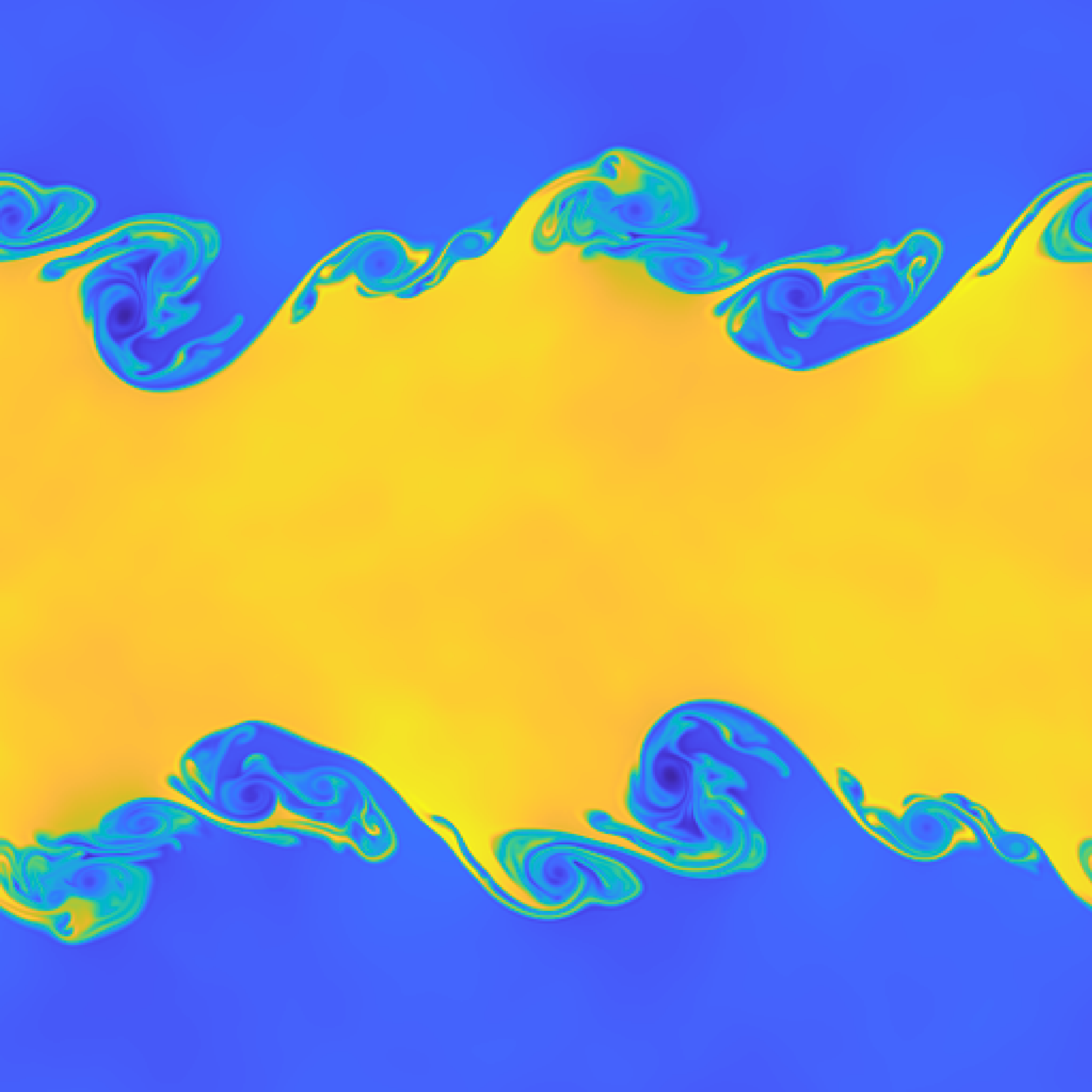} \\[0.15cm]
		\includegraphics[width=0.32\linewidth]{./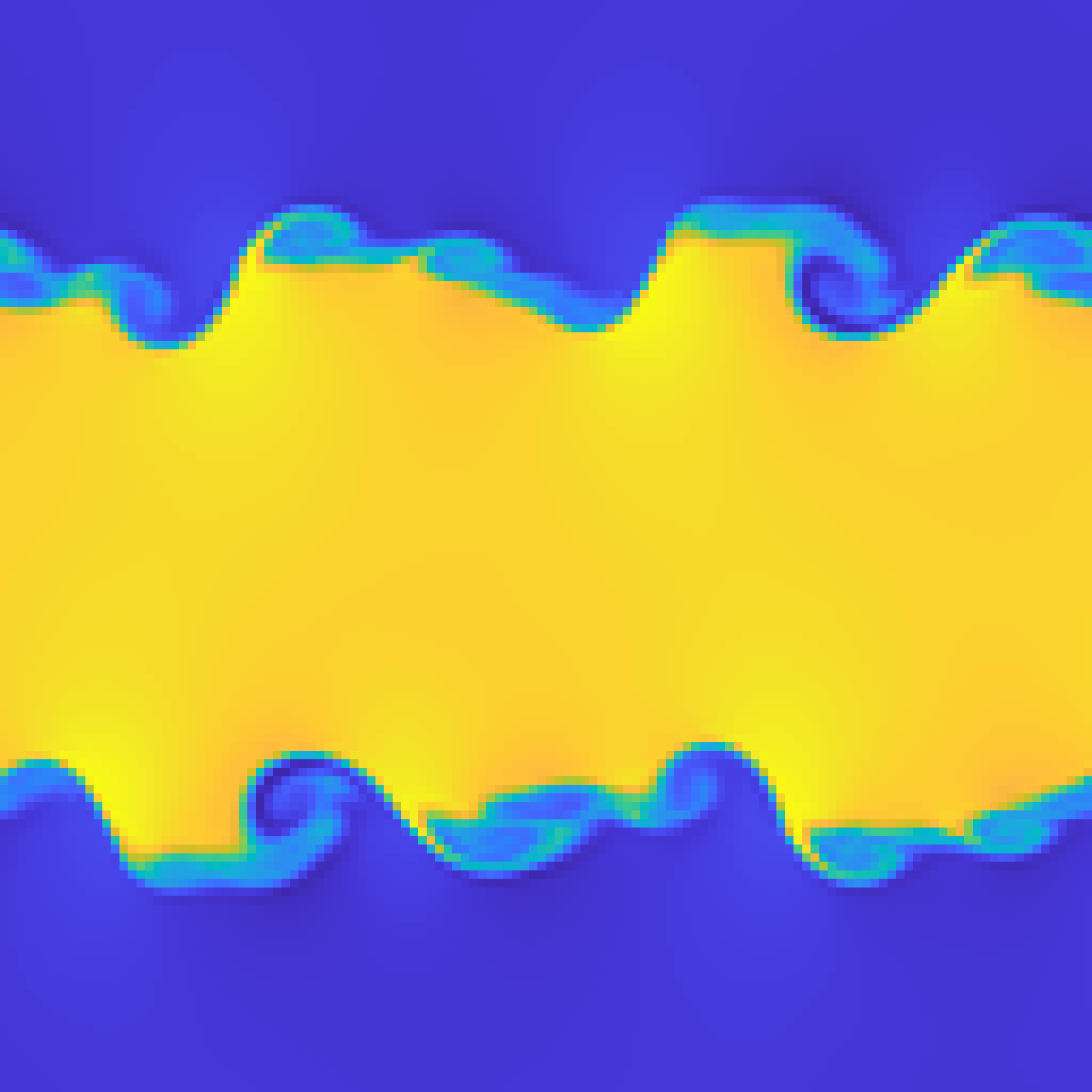}\hfill
	\includegraphics[width=0.32\linewidth]{./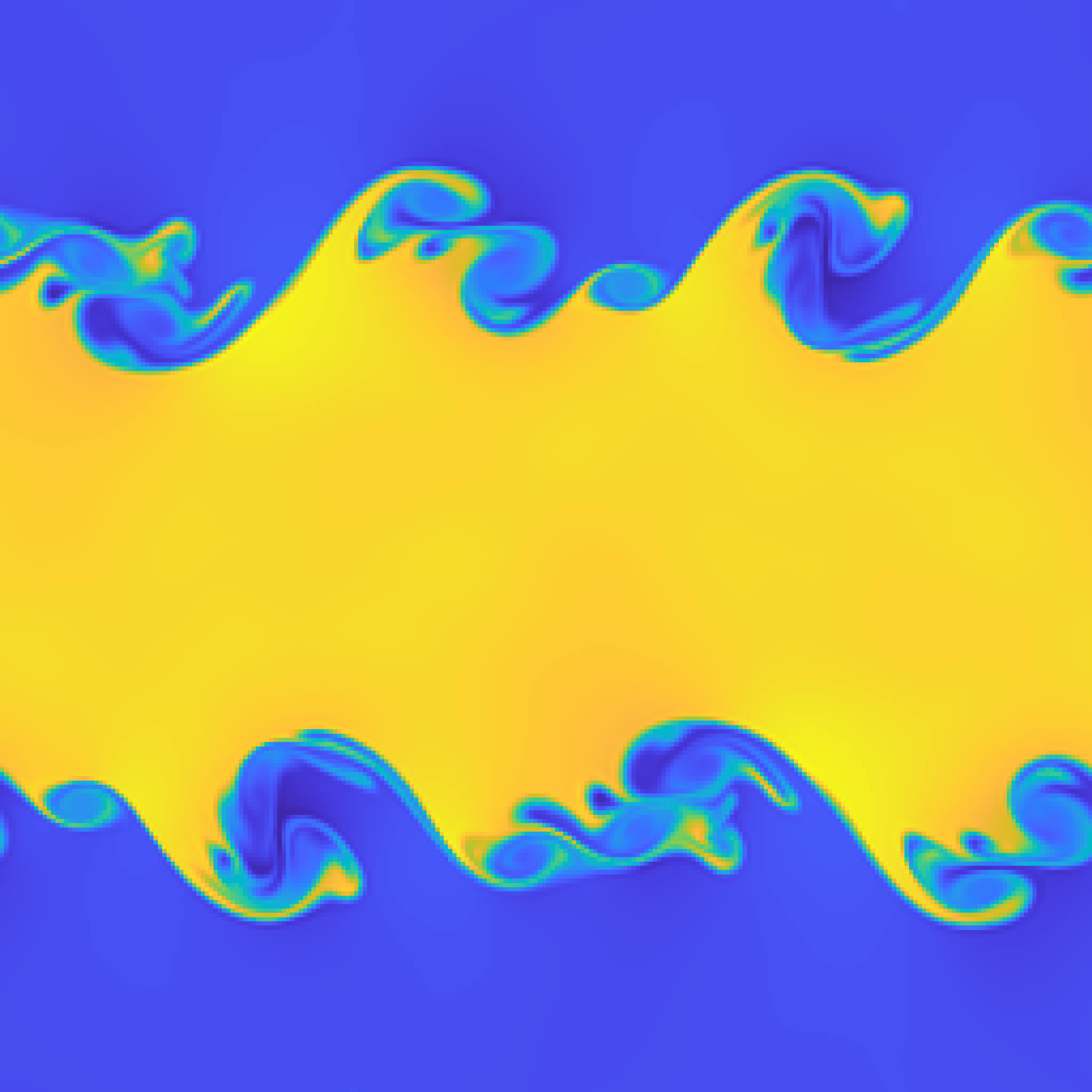}\hfill
	\includegraphics[width=0.32\linewidth]{./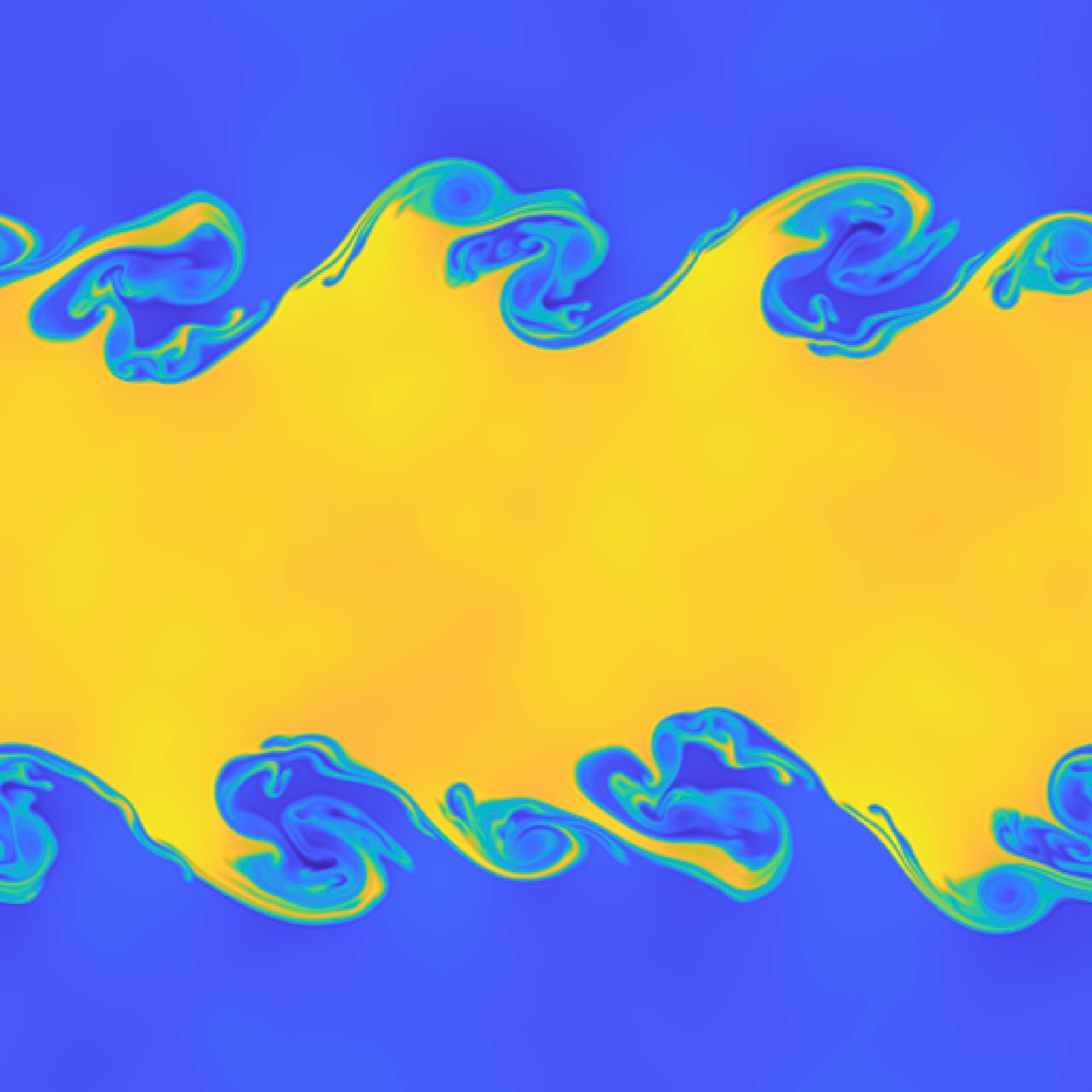}
\caption{\label{fig:KH}Approximations of the Kelvin-Helmholtz
  instability at time $t=1$ using the
  unlimited (top) and the limited (bottom) third order accurate Active
  Flux method on grids with $128^2$ (left), $256^2$ (middle) and
  $512^2$ (right) grid cells.}
\end{figure}
The observed structures compare with
structures seen in WENO methods of higher order and on finer grids
showing again that the fully discrete Active Flux method performs
well even on coarse grids.

\subsection{Implementation of boundary conditions}

Samani and Roe \cite{article:SR2023} pointed out that fully discrete
Active Flux methods allow an accurate implementation of boundary
conditions for acoustics.
So far our computations used either periodic boundary
conditions or outflow conditions, which were implemented using a
straight forward piecewise constant extrapolation of the data to ghost cells as in Figure \ref{fig:bc} (right).
In our final test problem,
we need in addition inflow as well as reflecting boundary conditions.

\begin{figure}	
	\includegraphics[width=1.\linewidth]{./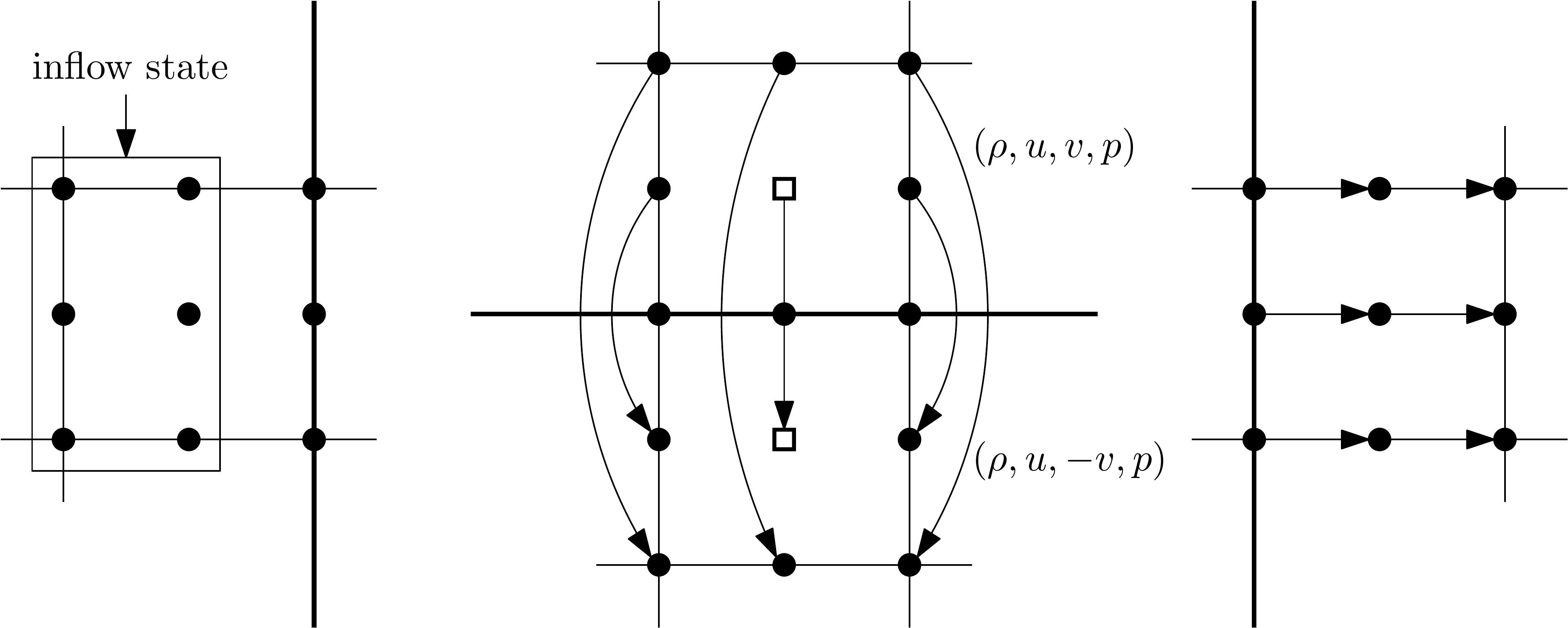}\hfill
	\caption{\label{fig:bc} Ghost cell filling for inflow (left), solid wall (middle) and outflow boundary conditions (right). }
\end{figure}

At an inflow boundary we define
point values in ghost cells and compute cell averages during each time step.
The boundary flux can then be computed by using the same
method at the boundary as inside the domain, Figure \ref{fig:bc} (left).

At a solid wall boundary we define cell
average values in ghost cells by copying the primitive variables from
the first grid cell to the ghost cell and negating the normal
velocity. Point values in ghost cells are defined by reflecting
the corresponding point values in direction normal to the boundary as
illustrated in  Figure \ref{fig:bc} (middle).
At a reflecting boundary simple averaging of neighboring grid
  cell values may no longer  provide an accurate enough linearisation
  that is needed
  by the evolution operator. Instead a linearisation around the point values at
  the boundary should be used.

\begin{example}
We now consider the shock reflection problem for the two-\\ dimensional Euler equations in the domain $[0,4]\times [0,1]$.
The initial values are 
\begin{equation*}
(\rho,u,v,p) = 	
\left\{ \begin{array}{lcl} 
(1.69997,2.61934,-0.50632,1.52819) & : & y=1,\\	
(1,2.9,0,1/1.4) & : & \text{otherwise}.
\end{array}\right.
\end{equation*}
These
values are also used as inflow conditions at the left and top
boundary. At the bottom a reflecting boundary is imposed and on the right boundary outflow conditions. 
\end{example}
Numerical results for the density at time $t=6$, using grids with
$120\times 30$ and $240 \times 60$ cells, are shown in Figure \ref{fig:SRP}. The computation was performed without any limiter.
\begin{figure}	
	\includegraphics[width=0.49\linewidth]{./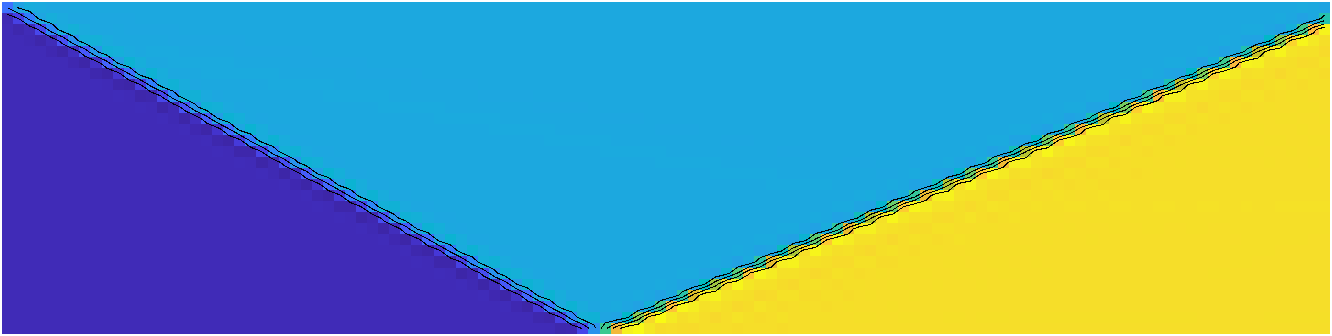}\hfill
	\includegraphics[width=0.49\linewidth]{./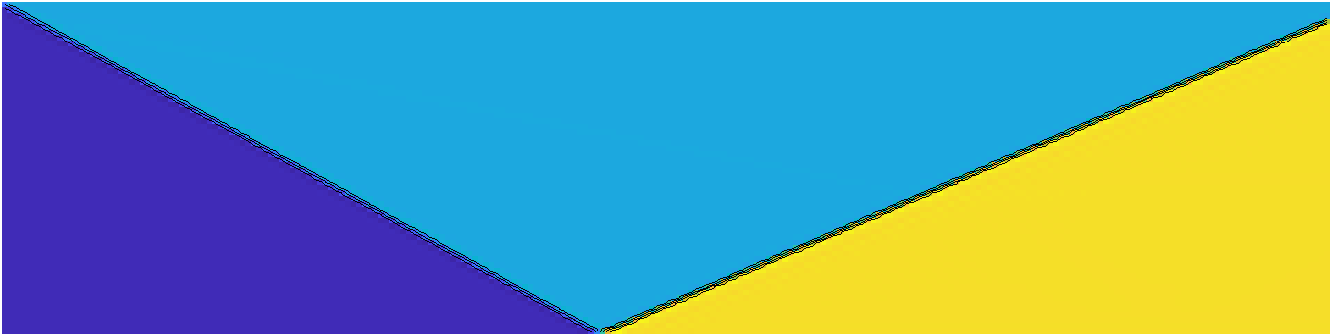}\hfill
	\caption{\label{fig:SRP} Approximations of the shock reflection problem at time $t=6$ with $120\times 30$ (left) and $240 \times 60$ cells (right). }
\end{figure}

\ignore{
\subsection{Implementation of boundary conditions and
  approximation of the forward facing step}

Samani and Roe \cite{article:SR2023} pointed out that fully discrete
Active Flux methods allow an accurate implementation of boundary
conditions for acoustics.
So far our computations used either periodic boundary
conditions or outflow conditions, which were implemented using a
straight forward piecewise constant extrapolation of the data to ghost cells.
In our final test problem, which studies flow over a forward facing step,
we need in addition inflow as well as reflecting boundary conditions.

At an inflow boundary we define
cell averages and point values in ghost cells and at the boundary of
the computational domain for each time step.
The boundary flux can then be computed by using the same
method at the boundary and inside the domain.

At a solid wall boundary we define cell
average values in ghost cells by copying the primitive variables from
the first grid cell to the ghost cell and negating the normal
velocity. Point values in ghost cells are defined by reflecting
the corresponding point values in direction normal to the boundary.
The evolution operator for the point values then automatically
provides point values with vanishing normal velocity.
At the corner of the step the definition is not
unique. \textcolor{red}{Need to add description.}

We now consider approximations of the forward facing step problem
proposed by Emery \cite{article:Emery68} and considered by many authors since.
\begin{example}
We consider the two-dimensional Euler equations in the domain $[0,3]
\times [0,1]$ which contains a step of height $0.2$ that starts at $0.6$.
The initial values are $\rho = 1.4$, $u=3$, $v=0$ and $p=1$. These
values are also used as inflow condition at the left
boundary. At the right boundary outflow conditions are
imposed. Reflecting boundary conditions are used at the top and the
botom of the domain. 
\end{example}
Numerical results for the density at time $t=4$, using a grid with
$???$ cells, are shown in \cref{fig:FFS}.
}

\section*{Conclusions}
We introduced new Active Flux methods for the two-dimensional Euler
equations on Cartesian grids. Our method uses the conservative form of
the equations for the update of the cell averages of the conserved
quantities and a local
linearisation of the characteristic form to evolve the point
values using the method of bicharacteristics.
Third order accuracy can be obtained by using a correction term which
eliminates the linearisation error.

We showed that the quality of the solution depends on the choice of
the local
linearisation, in particular when approximating
transonic shock waves or transonic rarefaction waves.

A parameter free limiting  was introduced which guarantees that
the point values have positive density and positive pressure. 
Our numerical simulations confirm that the carefully designed fully
discrete me\-thod provides accurate results even on relatively coarse grids.
For our test problems no additional flux limiter was needed. However,
there are approaches available in the recent literature which
can be combined with our fully discrete approach
to guarantee that density and pressure computed from the cell average
values remain positive.
\bibliographystyle{siamplain}
\bibliography{references}

\begin{thebibliography}{10}

\bibitem{article:Abgrall2023}
{\sc R.~Abgrall}, {\em A combination of residual distribution and the active
  flux formulations or a new class of schemes that can combine several writings
  of the same hyperbolic problem: Application to the 1d euler equations},
  Communications on Applied Mathematics and Computation, 5 (2023),
  pp.~370--402.

\bibitem{article:AB2023}
{\sc R.~Abgrall and W.~Barsukow}, {\em Extensions of active flux to arbitrary
  order of accuracy}, ESAIM Math. Model. Numer. Anal., 57 (2023),
  pp.~991--1027.

\bibitem{article:ABK2025}
{\sc R.~Abgrall, W.~Barsukow, and C.~Klingenberg}, {\em A semi-discrete active
  flux method for the {E}uler equations on {C}artesian grids}, J. Sci. Comput.,
  102 (2025), pp.~Paper No. 36, 41.

\bibitem{preprint:ALB2025}
{\sc R.~Abgrall, Y.~Liu, and W.~Boscheri}, {\em Bound preserving
  {P}oint-{A}verage-{M}oment {P}olynomi{A}l-interpreted ({PAMPA}) on polygonal
  meshes}, 2025, \url{https://arxiv.org/abs/2502.10069},
  \url{https://arxiv.org/abs/2502.10069}.

\bibitem{article:Barsukow2021}
{\sc W.~Barsukow}, {\em The active flux scheme for nonlinear problems}, J. Sci.
  Comput., 86 (2021), pp.~Paper No. 3, 34.

\bibitem{article:Barsukow2023}
{\sc W.~Barsukow}, {\em Stationarity preservation properties of the active flux
  scheme on {C}artesian grids}, Commun. Appl. Math. Comput., 5 (2023),
  pp.~638--652.

\bibitem{article:BHKR2019}
{\sc W.~Barsukow, J.~Hohm, C.~Klingenberg, and P.~L. Roe}, {\em The active flux
  scheme on {C}artesian grids and its low {M}ach number limit}, J. Sci.
  Comput., 81 (2019), pp.~594--622.

\bibitem{article:CCH2023}
{\sc D.~Calhoun, E.~Chudzik, and C.~Helzel}, {\em The {C}artesian grid active
  flux method with adaptive mesh refinement}, J. Sci. Comput., 94 (2023),
  pp.~Paper No. 54, 31.

\bibitem{article:CHK2021}
{\sc E.~Chudzik, C.~Helzel, and D.~Kerkmann}, {\em The cartesian grid active
  flux method: Linear stability and bound preserving limiting}, Applied
  Mathematics and Computation, 393 (2021), p.~125501.

\bibitem{article:CHL2024}
{\sc E.~Chudzik, C.~Helzel, and M.~Luk\'a\v{c}ov\'a-Medvid'ov\'a}, {\em Active
  flux methods for hyperbolic systems using the method of bicharacteristics},
  J. Sci. Comput., 99 (2024), pp.~Paper No. 16, 39.

\bibitem{article:DBK2025}
{\sc J.~Duan, W.~Barsukow, and C.~Klingenberg}, {\em Active flux methods for
  hyperbolic conservation laws -- flux vector splitting and
  bound-preservation}, SIAM J.\ Sci.\ Comput., 47 (2025), pp.~811--837.

\bibitem{eymann2011active}
{\sc T.~Eymann and P.~Roe}, {\em Active flux schemes}, in 49th AIAA Aerospace
  Sciences Meeting including the New Horizons Forum and Aerospace Exposition,
  2011, p.~382.

\bibitem{eymann2013multidimensional}
{\sc T.~A. Eymann and P.~L. Roe}, {\em Multidimensional active flux schemes},
  in 21st AIAA computational fluid dynamics conference, 2013, p.~2940.

\bibitem{PhD:Fan2017}
{\sc D.~Fan}, {\em On the Acoustic Component of Active Flux Schemes for
  Nonlinear Hyperbolic Conservation Laws}, PhD thesis, University of Michigan,
  2017.

\bibitem{FR2015}
{\sc D.~Fan and P.~Roe}, {\em Investigations of a new scheme for wave
  propagation}.
\newblock AIAA Aviation Forum, 2015.

\bibitem{article:HKS2019}
{\sc C.~Helzel, D.~Kerkmann, and L.~Scandurra}, {\em A new {ADER} method
  inspired by the active flux method}, J. Sci. Comput., 80 (2019),
  pp.~1463--1497.

\bibitem{JST1981}
{\sc A.~Jameson, W.~Schmidt, and E.~Turkel}, {\em Numerical solution of the
  euler equations by finite volume methods using runge kutta time stepping
  schemes}, in Proceedings of 14th Fluid and Plasma Dynamics Conference, Palo
  Alto,CA, 1981.

\bibitem{article:KKM2008}
{\sc S.~Kadioglu, R.~Klein, and M.~Minion}, {\em A fourth-order auxiliary
  variable projection method for zero-mach number gas dynamics}, J.\ Comput.\
  Phys., 227 (2008), pp.~2012--2043.

\bibitem{article:KCH2025}
{\sc Y.~Kiechle, E.~Chudzik, and C.~Helzel}, {\em A positivity-preserving
  active flux method for the {V}lasov-{P}oisson system}, J. Comput. Phys., 524
  (2025), pp.~Paper No. 113693, 23.

\bibitem{article:KT2002}
{\sc A.~Kurganov and E.~Tadmor}, {\em Solution of two-dimensional riemann
  problems for gas dynamics without riemann problem solvers}, Numer.\ Methods
  Partial Differential Equations, 18 (2002), pp.~561--688.

\bibitem{article:Kuzmin2020}
{\sc D.~Kuzmin}, {\em Monolithic convex limiting for continuous finite element
  discretizations of hyperbolic conservation laws}, Computer Methods in Applied
  Mechanics and Engineering, 361 (2020), p.~112804.

\bibitem{book:LeV2007}
{\sc R.~J. LeVeque}, {\em Finite difference methods for ordinary and partial
  differential equations: steady-state and time-dependent problems}, SIAM,
  2007.

\bibitem{LindeRoe}
{\sc T.~Linde and P.~Roe}, {\em Robust euler codes}, in 13th AIAA computational
  fluid dynamics conference, 1997, p.~2098.

\bibitem{article:LMW2000}
{\sc M.~Luk\'a\v{c}ov\'a-Medvid'ov\'a, K.~W. Morton, and G.~Warnecke}, {\em
  Evolution {G}alerkin methods for hyperbolic systems in two space dimensions},
  Math. Comp., 69 (2000), pp.~1355--1384.

\bibitem{article:LSW2002}
{\sc M.~Luk\'a\v{c}ov\'a-Medvid'ov\'a, J.~Saibertov\'a, and G.~Warnecke}, {\em
  Finite volume evolution {G}alerkin methods for nonlinear hyperbolic systems},
  J. Comput. Phys., 183 (2002), pp.~533--562.

\bibitem{proc:LT2009}
{\sc M.~Luk\'a\v{c}ov\'a-Medvid'ov\'a and E.~Tadmor}, {\em On the entropy
  stability of the roe-type finite volume method}, in Proceedings of Symposia
  in Applied Mathematics, vol.~67, 2009, pp.~765--774.

\bibitem{article:Roe2017}
{\sc P.~Roe}, {\em Is discontinuous reconstruction really a good idea?}, J.
  Sci. Comput., 73 (2017), pp.~1094--1114.

\bibitem{article:Roe2018}
{\sc P.~Roe}, {\em Did numerical methods for hyperbolic problems take a wrong
  turning?}, in Theory, numerics and applications of hyperbolic problems. {II},
  vol.~237 of Springer Proc. Math. Stat., Springer, Cham, 2018, pp.~517--534.

\bibitem{article:Roe2020}
{\sc P.~Roe}, {\em My way: a computational autobiography}, Commun. Appl. Math.
  Comput., 2 (2020), pp.~321--340.

\bibitem{article:Roe2021}
{\sc P.~Roe}, {\em Designing {CFD} methods for bandwidth---a physical
  approach}, Comput. \& Fluids, 214 (2021), pp.~Paper No. 104774, 13.

\bibitem{proc:Roe2025}
{\sc P.~Roe}, {\em Musings of a computational philosopher}, in Proceedings of
  the Cambridge Unsteady Flow Symposium 2024, J.~C. Tyacke and N.~R. Vadlamani,
  eds., Cham, 2025, Springer Nature Switzerland, pp.~1--35.

\bibitem{article:SR2023}
{\sc I.~Samani and P.~Roe}, {\em Acoustics on a coarse grid}, 2023.
\newblock AIAA SCITECH 2023 Forum.

\bibitem{article:SanKara}
{\sc O.~San and K.~Kara}, {\em Evaluation of riemann flux solvers for weno
  reconstruction schemes: Kevin-helmholtz instability}, Comput. \& Fluids, 117
  (2015), pp.~24--41.

\bibitem{article:RCG1993}
{\sc C.~Schulz-Rinne, J.~Collins, and H.~Glaz}, {\em Numerical solution of the
  riemann problem for two-dimensional gas dynamics}, 1993.

\bibitem{article:WWD2019}
{\sc Y.~Wang, B.-S. Wang, and W.~Don}, {\em Generalized sensitivity parameter
  free fifth order weno finite difference scheme with z-type weights}, J.\
  Sci.\ Comput., 81 (2019), pp.~1329--1358.

\bibitem{article:WS2023}
{\sc K.~Wu and C.-W. Shu}, {\em Geometric quasilinearization framework for
  analysis and design of bound-preserving schemes}, SIAM Review, 65 (2023),
  pp.~1031--1073.

\bibitem{article:ZS2010}
{\sc X.~Zhang and C.-W. Shu}, {\em On maximum-principle-satisfying high order
  schemes for scalar conservation laws}, J.\ Comput.\ Phys., 229 (2010),
  pp.~3091--3120.

\bibitem{article:ZS2010b}
{\sc X.~Zhang and C.-W. Shu}, {\em On positivity-preserving high order
  discontinuous galerkin schemes for compressible euler equations on
  rectangular meshes}, J.\ Comput.\ Phys., 229 (2010), pp.~8918--8934.

\end{thebibliography}
\end{document}